\documentclass[11pt]{article}
\usepackage[letterpaper,margin=1in]{geometry}
\usepackage[T1]{fontenc}
\usepackage{lmodern,microtype,amsmath,amssymb,amsthm,mathtools}
\usepackage{algorithm,algpseudocode,booktabs,array,enumitem}
\usepackage[numbers,sort&compress]{natbib}
\usepackage{bm}
\usepackage{placeins}
\usepackage[colorlinks=true,linkcolor=blue,citecolor=blue,urlcolor=blue]{hyperref}
\hypersetup{pdftitle={Envelope Cubic Regularized Newton Method for Nonconvex Strongly-Concave Minimax Optimization},pdfauthor={}}
\setlist[itemize]{leftmargin=1.4em,itemsep=3pt,topsep=5pt}
\allowdisplaybreaks[2]

\newtheorem{theorem}{Theorem}
\newtheorem{lemma}[theorem]{Lemma}
\newtheorem{proposition}[theorem]{Proposition}
\newtheorem{observation}[theorem]{Observation}

\theoremstyle{definition}
\newtheorem{assumption}{Assumption}
\newtheorem{definition}{Definition}
\theoremstyle{remark}

\usepackage{thm-restate}

\def\sN{{\mathbb{N}}}
\def\sO{{\mathbb{O}}}

\newcommand{\R}{\mathbb R}

\newcommand{\eps}{\varepsilon}
\newcommand{\norm}[1]{\lVert #1\rVert}
\newcommand{\ip}[2]{\langle #1,#2\rangle}
\newcommand{\cO}{\mathcal O}
\newcommand{\tO}{\widetilde{\mathcal O}}
\newcommand{\argmin}{\operatorname*{arg\,min}}
\newcommand{\argmax}{\operatorname*{arg\,max}}
\newcommand{\supp}{\operatorname{supp}}
\newcommand{\Lip}{\operatorname{Lip}}

\newcommand{\bx}{{\bf x}}
\newcommand{\by}{{\bf y}}
\newcommand{\bz}{{\bf z}}
\newcommand{\bxi}{{\boldsymbol\xi}}
\newcommand{\bu}{{\bf u}}
\newcommand{\bv}{{\bf v}}
\newcommand{\bw}{{\bf w}}
\newcommand{\bs}{{\bf s}}
\newcommand{\bg}{{\bf g}}
\newcommand{\ba}{{\bf a}}

\newcommand{\bA}{{\bf A}}
\newcommand{\bB}{{\bf B}}

\newcommand{\bH}{{\bf H}}
\newcommand{\bI}{{\bf I}}

\newcommand{\bM}{{\bf M}}

\newcommand{\bP}{{\bf P}}

\newcommand{\bU}{{\bf U}}

\newcommand{\bzero}{{\bf 0}}

\def\vx{{\bm{x}}}
\def\vy{{\bm{y}}}

\title{Envelope Cubic Regularized Newton Method for\\Nonconvex-Strongly-Concave Minimax Optimization}
\author{Qihao Zhou\thanks{Equal Contribution.}\qquad Lesi Chen\footnotemark[1]\qquad Luo Luo\qquad Chengchang Liu}
\date{}
\begin{document}
\maketitle
\vspace{-2.4em}
\begin{abstract}
We study nonconvex-strongly-concave minimax problem $\min_{\mathbf{x}\in\mathbb{R}^{d_x}}\max_{\mathbf{y}\in\mathbb{R}^{d_y}}f(\mathbf{x},\mathbf{y})$ in which the objective has $\rho$-Lipschitz Hessian and condition number $\kappa$.
We propose envelope cubic regularized Newton method (ECRN) which applies a refined inexact cubic regularized Newton method to the envelope of the primal function $\Phi(\cdot) : = \max_{\mathbf{y}\in\mathbb{R}^{d_y}}f(\cdot,\mathbf{y})$. 
We show that ECRN finds approximate second-order stationary points of $\Phi(\cdot)$ with $\widetilde{\mathcal{O}}(\sqrt\rho\,\kappa^{3/4}\varepsilon^{-3/2})$ second-order oracles, improving the condition number dependence $\mathcal{O}(\kappa^{2})$ achieved by the best known second-order methods.
Our improvement comes from the key observation that the Hessian Lipschitz parameter of 
the envelope is $\mathcal{O}(\rho\kappa^{3/2})$,
which is sharper than the order of $\mathcal{O}(\rho\kappa^{3})$ for the function~$\Phi(\cdot)$.  
We also conduct a lower bound on second-order methods to show that our complexity is optimal up to a logarithmic factor.
\end{abstract}

\section{Introduction}\label{sec:intro}
We study the nonconvex--strongly-concave minimax problem
\begin{equation}\label{eq:problem}
\min_{\bx\in\R^{d_x}}\max_{\by\in\R^{d_y}}f(\bx,\by),
\end{equation}
where $f(\vx,\vy)$ has an $L$-Lipschitz gradient, is possibly nonconvex in $\bx$, and is $\mu$-strongly concave in $\by$. We write $\kappa:=L/\mu$ for the condition number and seek approximate stationary points of the primal function $\Phi(\bx):=\max_{\by\in\R^{d_y}}f(\bx,\by)$. 
Such problems arise in game theory~\cite{von1947theory}, adversarial learning~\cite{zhang2018mitigating,liu2022partial}, robust optimization~\cite{ben2009robust}, AUC maximization~\cite{hanley1982meaning,ying2016stochastic,yuan2021large}, and reinforcement learning~\cite{wai2018multi,paternain2022safe}. 

For the first-order methods, gradient descent ascent requires $\cO(\kappa^2\eps^{-2})$ gradient evaluations to find an $\eps$-approximate first-order stationary point of $\Phi(\cdot)$ \citep{lin2020gda}. \citet{lin2020near} improved the condition-number dependence through an inexact accelerated proximal-point method. 
Its central construction is the primal Moreau envelope:
\begin{equation}\label{eq:envelope}
\Phi_\gamma(\bz):=\min_{\bu\in\R^{d_x}}\Bigl\{\Phi(\bu)+\frac{\gamma}{2}\norm{\bu-\bz}^2\Bigr\}
=\min_{\bu\in\R^{d_x}}\max_{\by\in\R^{d_y}}\Bigl\{f(\bu,\by)+\frac{\gamma}{2}\norm{\bu-\bz}^2\Bigr\}.
\end{equation}
The regularized objective $f(\bu,\by)+(\gamma/2)\norm{\bu-\bz}^2$ is $L$-strongly convex in $\bu$ and $\mu$-strongly concave in $\by$ by taking $\gamma = 2L$.
An accelerated first-order solver therefore requires $\tO(\sqrt{L^2/(L\mu)})=\tO(\sqrt\kappa)$ evaluations per subproblem. 
Combining these inner solves with $\cO(L\eps^{-2})$ outer proximal steps gives $\tO(L\sqrt\kappa\,\eps^{-2})$ first-order evaluations \citep{lin2020near}, matching the lower bound of \citet{zhang2021complexity} up to logarithmic factors.
When the Hessian of $f(\cdot,\cdot)$ is $\rho$-Lipschitz continuous, the Hessian of the primal function $\Phi(\cdot)$ is also Lipschitz continuous \citep{luo2022,chen2023cubic} with parameter of $\mathcal{O}(\rho\kappa^3)$. 
Under this additional smoothness, \citet{wang2024} propose an inexact accelerated proximal-point method which finds an $\eps$-approximate first-order stationary point of $\Phi(\cdot)$ with $\tO(L^{1/2}\rho^{1/4}\kappa^{5/4}\eps^{-7/4})$ function and gradient.\footnote{Their Corollary~2.20 gives $\tO(L^{1/2}L_2^{1/4}\sqrt\kappa\,\eps^{-7/4})$ under the assumption that the Hessian of $\Phi(\cdot)$ is $L_2$-Lipschitz continuous. Replacing $L_2=\cO(\rho\kappa^3)$ leads to the complexity of $\tilde{\mathcal{O}}(L^{1/2}\rho^{1/4}\kappa^{5/4}\epsilon^{-7/4})$.}

We target to find the $(\epsilon,\mathcal{O}(\sqrt{\epsilon}))$-approximate second-order stationary points (SOSP) of $\Phi(\cdot)$.
It is natural to apply inexact cubic regularization to $\Phi(\cdot)$ which  takes $\cO(\sqrt{\rho\kappa^{3}}\eps^{-3/2})$ outer iterations~\cite{luo2022,chen2023cubic}.
The dual maximizer can be approximated by accelerated gradient ascent at each iteration, which adds a factor $\tO(\sqrt\kappa)$ to the gradient cost and leads to the total $\tO(\sqrt\rho\,\kappa^2\eps^{-3/2})$ oracle complexity \citep{luo2022}.
More recently, \citet{ma2026} propose a single-loop adaptive cubic-quadratic regularized Newton method (ACQRN) for the penalty $h(\bx,\by)=f(\bx,\by)+(1/\mu)\norm{\nabla_yf(\bx,\by)}^2$. 
Their method takes $\cO(\sqrt{\kappa\rho}\,\epsilon^{-3/2})$ complexity to find approximate SOSP of $h(\cdot,\cdot)$, 
which leads to the $\cO(\sqrt{\rho}\kappa^{2}\,\epsilon^{-3/2})$ complexity  for finding an approximate SOSP of $\Phi(\cdot)$.
Although the $\eps^{-3/2}$ dependence is optimal by the lower bound for smooth nonconvex minimization \citep{carmon2020lower}, 
it still remains open whether the condition number dependence is optimal.

In this paper, we propose Envelope Cubic Regularized Newton (ECRN), 
which finds approximate SOSP of $\Phi(\cdot)$ with $\tO(\sqrt\rho\,\kappa^{3/4}\eps^{-3/2})$ second-order oracle complexity. 
The method exploits a sharper Hessian-Lipschitz bound for the primal envelope in~\eqref{eq:envelope}. 
In the prior literature, the Hessian-Lipschitz parameter of the envelope function is known to be the same order as that of the original function.
By taking the advantage of the minimax structure, we show that the Hessian of $\Phi_{2L}(\cdot)$ is $\mathcal{O}(\rho \kappa^{3/2})$-Lipschitz continuous,
which is better than the $\mathcal{O}(\rho\kappa^3)$-Lipschitz Hessian of $\Phi(\cdot)$.
Applying inexact cubic regularization to $\Phi_{2L}(\cdot)$ therefore requires $\cO(\sqrt\rho\,\kappa^{3/4}\eps^{-3/2})$ outer iterations.
Approximating $\nabla\Phi_{2L}(\cdot)$ and $\nabla^2\Phi_{2L}(\cdot)$ requires solving an unbalanced strongly-convex-strongly-concave (SC-SC) problem at each outer iteration. 
To solve these subproblems efficiently, we propose NewtonTrack, which constructs a good initialization from previously computed approximate solutions 
and applies Newton's method sequencially.
We also provide lower bound for second-order methods to solve nonconvex strongly-concave problem, which shows that the proposed ECRN method is near-optimal.

\section{Preliminaries and Related Work}\label{sec:prelim}
We use $\norm{\cdot}$ for the Euclidean norm of vectors and the induced operator norm of matrices. The matrix $\bI_d$ denotes the $d$-dimensional identity.
We impose the following assumptions throughout the paper.

\begin{assumption}\label{ass:smooth}
The function $f:\R^{d_x}\times\R^{d_y}\to\R$ is twice continuously differentiable. For all $\bw,\bv\in\R^{d_x+d_y}$, we assume that
\begin{equation}\label{eq:smooth}
\norm{\nabla^2f(\bw)}\le L~~~\text{and}~~~
\norm{\nabla^2f(\bw)-\nabla^2f(\bv)}\le\rho\norm{\bw-\bv},
\end{equation}
where $L,\rho>0$.
\end{assumption}
\begin{assumption}\label{ass:concave}
For every $\bx$, the function $f(\bx,\cdot)$ is $\mu$-strongly concave, where $0<\mu\le L$. We write $\kappa=L/\mu$.
\end{assumption}
\begin{assumption}\label{ass:gap}
The primal function $\Phi(\cdot)$ is bounded below, and the initial point $\bz_0$ satisfies $\Phi(\bz_0)-\inf_{\bz}\Phi(\bz)\le\Delta$ for some $\Delta>0$.
\end{assumption}

We also introduce the following definition of approximate first-order and second-order stationary points of $\Phi(\cdot)$.
\begin{definition}\label{def:sosp}
For $\eps,\eta>0$, a point $\hat\bx$ is an $\eps$-first-order stationary point of $\Phi(\cdot)$ if $\norm{\nabla\Phi(\hat\bx)}\le\eps$. 
It is an $(\eps,\eta)$-second-order stationary point if, in addition,
$
\nabla^2\Phi(\hat\bx)\succeq-\eta\bI_{d_x}.
$
\end{definition}

\subsection{Inexact Cubic Regularization Method}\label{sec:inexact-crn}
We recall the inexact variant of cubic reguarlized Newton (CRN) method~\citep{nesterov2006} to solve the minimization problem
\[
\min_{\bz\in\R^d}g(\bz),
\]
where $g:\R^d\to\R$ is twice continuously differentiable and its Hessian is $\rho_g$-Lipschitz continuous for some $\rho_g>0$. 
At each iterate $\bz_k$, we can obtain an approximate gradient $\widehat{\bg}_k$ and a symmetric approximate Hessian $\widehat{\bH}_k$ satisfying
\begin{equation}\label{eq:model-contract}
\norm{\widehat{\bg}_k-\nabla g(\bz_k)}\le\frac{\eps}{128}
\quad\text{and}\quad
\norm{\widehat{\bH}_k-\nabla^2g(\bz_k)}\le\frac{\sqrt{\rho_g\eps}}{128}.
\end{equation}
The following lemma shows the inexact CRN method presented in Algorithm~\ref{alg:inexact-crn} maintain the $\cO(\epsilon^{-3/2})$ iteration complexity of CRN method.

\begin{algorithm}[htbp]
\caption{Inexact Cubic Regularized Newton (Inexact CRN)}\label{alg:inexact-crn}
\begin{algorithmic}[1]
\State \textbf{Input:} initial point $\bz_0$, accuracy $\eps>0$, constant $\rho_g>0$, and a derivative routine satisfying~\eqref{eq:model-contract}.
\For{$k=0,1,2,\ldots$}
\State Obtain $\widehat{\bg}_k$ and $\widehat{\bH}_k$ satisfying~\eqref{eq:model-contract}.
\State $\bs_k\in\argmin_{\bs}\{\ip{\widehat{\bg}_k}{\bs}+\frac{1}{2}\bs^\top\widehat{\bH}_k\bs+\frac{\rho_g}{3}\norm{\bs}^3\}$.
\State $\bz_{k+1}\gets\bz_k+\bs_k$.
\If{$\norm{\bs_k}\le\sqrt{\frac{\eps}{8\rho_g}}$}
\State \textbf{return} $\bz_{k+1}$.
\EndIf
\EndFor
\end{algorithmic}
\end{algorithm}

\begin{lemma}[{cf.\ \citet[Lemmas~2--3]{wang2019inexact}}]\label{lem:cubic}
Let $g:\R^d\to\R$ be twice continuously differentiable with a $\rho_g$-Lipschitz Hessian, where $\rho_g>0$, and suppose $g(\bz_0)-\inf_{\bz}g(\bz)\le\Delta$. If the derivative routine satisfies~\eqref{eq:model-contract} at every iterate, then Algorithm~\ref{alg:inexact-crn} returns $\bz_{K+1}$ after $K+1$ iterations, where
\begin{equation}\label{eq:cubic-sum}
K\le4\cdot8^{\frac{3}{2}}\Delta\sqrt{\rho_g}\,\eps^{-\frac{3}{2}}
\quad\text{and}\quad
\sum_{k=0}^{K-1}\norm{\bs_k}^3\le\frac{4\Delta}{\rho_g}.
\end{equation}
Its output satisfies
\begin{equation}\label{eq:cubic-stop}
\norm{\nabla g(\bz_{K+1})}<\frac{\eps}{2}
\quad\text{and}\quad
\nabla^2g(\bz_{K+1})\succeq-\frac{3}{4}\sqrt{\rho_g\eps}\,\bI_d.
\end{equation}
\end{lemma}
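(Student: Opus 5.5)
The plan is to follow the standard Nesterov–Polyak descent argument for cubic regularization, with the inexactness absorbed through~\eqref{eq:model-contract}. Write $m_k(\bs)=\ip{\widehat{\bg}_k}{\bs}+\frac12\bs^\top\widehat{\bH}_k\bs+\frac{\rho_g}{3}\norm{\bs}^3$. I will use the global-minimizer optimality conditions
\[
\widehat{\bg}_k+\widehat{\bH}_k\bs_k+\rho_g\norm{\bs_k}\bs_k=\bzero
\quad\text{and}\quad
\widehat{\bH}_k+\rho_g\norm{\bs_k}\bI_d\succeq\bzero .
\]
These give $m_k(\bs_k)\le-\frac{\rho_g}{6}\norm{\bs_k}^3$, obtained by substituting the first-order condition and then using the second.

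\textbf{Step 1 (descent).} By the $\rho_g$-Lipschitz Hessian,
\[
g(\bz_{k+1})\le g(\bz_k)+\ip{\nabla g(\bz_k)}{\bs_k}+\frac12\bs_k^\top\nabla^2g(\bz_k)\bs_k+\frac{\rho_g}{6}\norm{\bs_k}^3 .
\]
Replacing exact derivatives by the estimates costs at most $\frac{\eps}{128}\norm{\bs_k}+\frac{\sqrt{\rho_g\eps}}{256}\norm{\bs_k}^2$. This yields
\[
g(\bz_{k+1})\le g(\bz_k)-\frac{\rho_g}{6}\norm{\bs_k}^3+\frac{\rho_g}{6}\norm{\bs_k}^3\cdot(\text{slack}) .
\]
Take an iteration that does not stop, so $\norm{\bs_k}>\sqrt{\eps/(8\rho_g)}$. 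Then $\eps<8\rho_g\norm{\bs_k}^2$, so both error terms are bounded by small constant multiples of $\rho_g\norm{\bs_k}^3$. This is where the constants $128$ and $8$ are arranged. Concretely, $\frac{\eps}{128}\norm{\bs_k}\le\frac{\rho_g}{16}\norm{\bs_k}^3$ and $\frac{\sqrt{\rho_g\eps}}{256}\norm{\bs_k}^2\le\frac{\sqrt8\rho_g}{256}\norm{\bs_k}^3$. I expect the net decrease to be at least $\frac{\rho_g}{4}\norm{\bs_k}^3$, or at least some constant that reproduces the stated $4\Delta/\rho_g$.

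\textbf{Step 2 (counting).} Summing the decrease over $k=0,\dots,K-1$ gives $\sum_{k<K}\norm{\bs_k}^3\le 4\Delta/\rho_g$. Each term exceeds $(\eps/(8\rho_g))^{3/2}$, so $K\le 4\Delta\rho_g^{-1}(8\rho_g/\eps)^{3/2}=4\cdot8^{3/2}\Delta\sqrt{\rho_g}\eps^{-3/2}$.

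\textbf{Step 3 (output quality).} At the stopping iteration $\norm{\bs_K}\le\sqrt{\eps/(8\rho_g)}$. For the gradient, use
\[
\nabla g(\bz_{K+1})=\bigl[\nabla g(\bz_{K+1})-\nabla g(\bz_K)-\nabla^2g(\bz_K)\bs_K\bigr]+\bigl[\nabla g(\bz_K)-\widehat{\bg}_K\bigr]+\bigl[(\nabla^2g(\bz_K)-\widehat{\bH}_K)\bs_K\bigr]-\rho_g\norm{\bs_K}\bs_K .
\]
The norm is then at most $\frac{3\rho_g}{2}\norm{\bs_K}^2+\frac{\eps}{128}+\frac{\sqrt{\rho_g\eps}}{128}\norm{\bs_K}\le\frac{3\eps}{16}+\frac{\eps}{128}+\frac{\eps}{128\sqrt8}<\frac{\eps}{2}$. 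For the Hessian,
\[
\nabla^2g(\bz_{K+1})\succeq\widehat{\bH}_K-\Bigl(\frac{\sqrt{\rho_g\eps}}{128}+\rho_g\norm{\bs_K}\Bigr)\bI_d\succeq-\Bigl(2\rho_g\norm{\bs_K}+\frac{\sqrt{\rho_g\eps}}{128}\Bigr)\bI_d ,
\]
and $2\rho_g\norm{\bs_K}\le\sqrt{\rho_g\eps/2}$, so the total is well below $\frac34\sqrt{\rho_g\eps}$.

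The main obstacle is Step 1: getting the constant in the per-step decrease exactly to $\rho_g/4$, which is needed to match~\eqref{eq:cubic-sum}. The plan is to use the sharper model bound $m_k(\bs_k)\le-\frac{\rho_g}{6}\norm{\bs_k}^3$ or its variant. If $1/4$ is too tight, I would instead follow the bookkeeping in \citet{wang2019inexact}, Lemma~2, and note that the statement is only cited there, so any constant of the same order suffices downstream.
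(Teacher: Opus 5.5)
Your route is the paper's: a per-step decrease bound, telescoping, and a check of the output at the stopping step. Steps~2 and~3 are correct, and in Step~3 you derive the second and third inequalities of \eqref{eq:inexact-decrease} yourself rather than citing \citet{wang2019inexact}. The gap is Step~1. You leave it unresolved, and the inequality you display there does not yield the stated constants.

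Your displayed bound keeps only $-\frac{\rho_g}{6}\norm{\bs_k}^3$, i.e.\ it amounts to $g(\bz_{k+1})\le g(\bz_k)+m_k(\bs_k)+E_k$ followed by $m_k(\bs_k)\le-\frac{\rho_g}{6}\norm{\bs_k}^3$, where $E_k=\frac{\eps}{128}\norm{\bs_k}+\frac{\sqrt{\rho_g\eps}}{256}\norm{\bs_k}^2$. With your own error estimates, the net coefficient on a non-stopping step is then only $\frac16-\frac1{16}-\frac{\sqrt8}{256}\approx0.093<\frac14$. So \eqref{eq:cubic-sum} would come out with constants about $2.7$ times worse. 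Your fallback, that any constant of the same order suffices downstream, is true for Theorem~\ref{thm:upper}, but it does not prove this lemma, which asserts explicit constants.

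The missing observation is that the Taylor remainder has coefficient $\rho_g/6$ while the model carries $\rho_g/3$, and this difference must not be discarded:
\[
g(\bz_{k+1})\le g(\bz_k)+m_k(\bs_k)-\frac{\rho_g}{6}\norm{\bs_k}^3+E_k\le g(\bz_k)-\frac{\rho_g}{3}\norm{\bs_k}^3+E_k .
\]
Equivalently, the first-order condition gives
\[
\ip{\widehat{\bg}_k}{\bs_k}+\tfrac12\bs_k^\top\widehat{\bH}_k\bs_k=-\tfrac12\bs_k^\top\widehat{\bH}_k\bs_k-\rho_g\norm{\bs_k}^3\le-\tfrac{\rho_g}{2}\norm{\bs_k}^3 ,
\]
and Taylor then adds back only $\frac{\rho_g}{6}\norm{\bs_k}^3$. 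This is the first line of \eqref{eq:inexact-decrease}. Combined with your error bounds, it gives the coefficient $\frac13-\frac{8}{128}-\frac{\sqrt8}{256}\approx0.26\ge\frac14$, which is exactly \eqref{eq:cubic-decrease}.

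Steps~2 and~3 then go through as you wrote them. For termination, apply the telescoped bound to any $N$ consecutive non-stopping steps, as the paper does, rather than presupposing that $K$ exists.
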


\section{The Primal Envelope and the Regularized Saddle Problem}\label{sec:envelope-saddle}
We first establish the properties of the primal envelope and its related saddle point problem in Section~\ref{sec:regularized-envelope}. 
We then introduce efficient methods to solve the regularized minimax problem in Section~\ref{sec:saddle-solvers}.

\subsection{Properties of the Regularized Saddle Problem and the Primal Envelope}\label{sec:regularized-envelope}
For $\bz\in\R^{d_x}$, consider the primal envelope $\Phi_{2L}(\cdot)$ defined in~\eqref{eq:envelope}. Write $\bw=(\bu,\by)$ and define the regularized objective by
\begin{equation}\label{eq:regularized-objective}
f_{\bz}(\bw):=f(\bu,\by)+L\norm{\bu-\bz}^2.
\end{equation}
For each fixed $\bz$, we denote the unique saddle point of \eqref{eq:regularized-objective} by
\begin{equation}\label{eq:saddle-map}
\bw^*(\bz):=\bigl(\bu^*(\bz),\by^*(\bz)\bigr),
\end{equation}
where $\bu^*(\bz)$ is the minimizer in~\eqref{eq:envelope} and $\by^*(\bz):=\argmax_{\by}f(\bu^*(\bz),\by)$. In particular, we have
\[
\Phi_{2L}(\bz)=f_{\bz}(\bw^*(\bz)).
\]
The gradient and Hessian of $f_{\bz}(\bw)$ are
\begin{equation}\label{eq:regularized-derivatives}
\begin{aligned}
&\nabla f_{\bz}(\bw)=
\begin{pmatrix}\nabla_uf(\bw)+2L(\bu-\bz)\\\nabla_yf(\bw)\end{pmatrix}~~~\text{and}~~~\nabla^2f_{\bz}(\bw)=
\begin{pmatrix}
\nabla^2_{uu}f(\bw)+2L\bI_{d_x}&\nabla^2_{uy}f(\bw)\\
\nabla^2_{yu}f(\bw)&\nabla^2_{yy}f(\bw)
\end{pmatrix}.
\end{aligned}
\end{equation}
The following proposition gives the basic properties of $f_{\bz}(\cdot)$.

\begin{proposition}[{cf.\ \citet[Lemma~1.1 and Theorem~1.2]{grimmer2023landscape}}]\label{prop:primal}
Under Assumptions~\ref{ass:smooth}--\ref{ass:concave}, the regularized objective in~\eqref{eq:regularized-objective} has the following properties.
\begin{enumerate}[label=(\roman*),leftmargin=2em,itemsep=3pt]
\item For every $\bz\in\R^{d_x}$, $f_{\bz}(\cdot)$ is $L$-strongly convex in $\bu$ and $\mu$-strongly concave in $\by$, and has the unique saddle point~\eqref{eq:saddle-map}. Its Hessian is nonsingular, with
\begin{equation}\label{eq:regularized-inverse-bound}
\norm{\bigl(\nabla^2f_{\bz}(\bw)\bigr)^{-1}}\le \mu^{-1}.
\end{equation}
\item For every $\bz\in\R^{d_x}$ and every $\bw,\bv\in\R^{d_x+d_y}$, we have
\begin{equation}\label{eq:regularized-hessian-bounds}
\begin{aligned}
\norm{\nabla^2f_{\bz}(\bw)}\le 3L~~~\text{and}~~~\norm{\nabla^2f_{\bz}(\bw)-\nabla^2f_{\bz}(\bv)}\le\rho\norm{\bw-\bv}.
\end{aligned}
\end{equation}
\end{enumerate}
\end{proposition}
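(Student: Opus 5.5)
Part (ii) follows directly from Assumption~\ref{ass:smooth}. By \eqref{eq:regularized-derivatives}, $\nabla^2 f_{\bz}(\bw)=\nabla^2 f(\bw)+2L\,\mathrm{diag}(\bI_{d_x},\bzero)$. The triangle inequality then gives $\norm{\nabla^2 f_{\bz}(\bw)}\le L+2L=3L$. The added block is constant in $\bw$, so it cancels in $\nabla^2 f_{\bz}(\bw)-\nabla^2 f_{\bz}(\bv)$, and the $\rho$-Lipschitz bound is inherited from $f$.

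For the convexity claims in (i), note that $\nabla^2_{uu} f_{\bz}=\nabla^2_{uu} f+2L\bI\succeq -L\bI+2L\bI=L\bI$, because $\norm{\nabla^2_{uu}f}\le\norm{\nabla^2 f}\le L$. Hence $f_{\bz}(\cdot,\by)$ is $L$-strongly convex for each $\by$. Strong concavity in $\by$ with modulus $\mu$ is Assumption~\ref{ass:concave}, and it is unaffected by the regularizer. A smooth strongly-convex-strongly-concave function on $\R^{d_x}\times\R^{d_y}$ has a unique saddle point; this follows from the standard minimax theorem with coercivity, or from strong monotonicity of the operator $(\nabla_u f_{\bz},-\nabla_y f_{\bz})$. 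Since $\max_{\by} f_{\bz}(\bu,\by)=\Phi(\bu)+L\norm{\bu-\bz}^2$, the $\bu$-component of the saddle point is the minimizer in \eqref{eq:envelope}, and the $\by$-component is $\argmax_{\by} f(\bu^*(\bz),\by)$, matching \eqref{eq:saddle-map}.

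The remaining and main step is the inverse bound \eqref{eq:regularized-inverse-bound}. Write the Hessian as $\bH=\begin{pmatrix}\bA&\bB\\ \bB^\top&-\bM\end{pmatrix}$ with $\bA\succeq L\bI$ and $\bM\succeq\mu\bI$. I would use the monotonicity trick. Let $\bP=\mathrm{diag}(\bI,-\bI)$. Then $\bP\bH=\begin{pmatrix}\bA&\bB\\-\bB^\top&\bM\end{pmatrix}$, whose symmetric part is $\mathrm{diag}(\bA,\bM)\succeq\mu\bI$, since $L\ge\mu$. For any $\bv$ this gives $\mu\norm{\bv}^2\le\bv^\top\bP\bH\bv\le\norm{\bv}\norm{\bP\bH\bv}=\norm{\bv}\norm{\bH\bv}$, using that $\bP$ is orthogonal. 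Hence $\norm{\bH\bv}\ge\mu\norm{\bv}$, so $\bH$ is nonsingular and $\norm{\bH^{-1}}\le\mu^{-1}$.

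This argument seems to settle the expected obstacle. If a sharper statement were ever needed, I would instead use the Schur complement $\bA+\bB\bM^{-1}\bB^\top\succeq L\bI$. Everything else is routine.
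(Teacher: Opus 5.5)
Your proposal is correct and follows essentially the same route as the paper. Both use the block bounds $\nabla^2_{uu}f_{\bz}\succeq L\bI_{d_x}$ and $\nabla^2_{yy}f_{\bz}\preceq-\mu\bI_{d_y}$ and the same triangle-inequality and cancellation argument for (ii); the paper only asserts that the block bounds give \eqref{eq:regularized-inverse-bound}, and your $\mathrm{diag}(\bI,-\bI)$ sign-flip argument supplies exactly that missing step, while the paper cites \citet{grimmer2023landscape} for existence and uniqueness of the saddle point where you instead use strong monotonicity, which is equally valid.
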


The next proposition gives the properties of the primal envelope.

\begin{proposition}[{cf.\ \citet[Lemma~17]{lin2020near} and \citet[Section~1.3]{grimmer2023landscape}}]\label{prop:envelope-basic}
Under Assumptions~\ref{ass:smooth}--\ref{ass:concave}, the envelope $\Phi_{2L}(\cdot)$ is twice continuously differentiable, with
\begin{equation}\label{eq:envelope-basic}
\nabla\Phi_{2L}(\bz)=2L\bigl(\bz-\bu^*(\bz)\bigr)=\nabla\Phi(\bu^*(\bz))
\end{equation}
and
\begin{equation}\label{eq:envelope-hessian}
\nabla^2\Phi_{2L}(\bz)=2L\bI_{d_x}-4L^2\bM(\bw^*(\bz))^{-1},
\end{equation}
where $\bM(\bw)$ is defined by
\begin{equation}\label{eq:schur-complement}
\bM(\bw):=\nabla^2_{uu}f(\bw)+2L\bI_{d_x}
-\nabla^2_{uy}f(\bw)\bigl(\nabla^2_{yy}f(\bw)\bigr)^{-1}\nabla^2_{yu}f(\bw).
\end{equation}
For every $\bw\in\R^{d_x+d_y}$, we have $\bM(\bw)\succeq L\bI_{d_x}$. The envelope has the following properties.
\begin{enumerate}[label=(\roman*),leftmargin=2em,itemsep=3pt]
\item Its infimum is $\inf_{\bz}\Phi_{2L}(\bz)=\inf_{\bz}\Phi(\bz)$. 
Under Assumption~\ref{ass:gap}, we have $\Phi_{2L}(\bz_0)-\inf_{\bz}\Phi_{2L}(\bz)\le\Delta$.
\item If $\bz$ is an $(\eps,\eta)$-second-order stationary point of $\Phi_{2L}(\cdot)$, then $\bu^*(\bz)$ is an $(\eps,2L\eta/(2L+\eta))$-second-order stationary point of $\Phi(\cdot)$, and hence also an $(\eps,\eta)$-second-order stationary point.
\end{enumerate}
\end{proposition}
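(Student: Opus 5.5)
The plan is to derive everything from the variational (min–max) characterization of $\Phi_{2L}$ together with the implicit function theorem applied to the saddle-point equation $\nabla f_{\bz}(\bw^*(\bz))=\bzero$.

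\textbf{Differentiability and gradient.} Since $\Phi$ has $L$-Lipschitz gradient in the standard NC–SC sense (indeed $\Phi$ is $\kappa L$-smooth, and in particular $\nabla^2\Phi\succeq -L\bI$ wherever it exists), the function $\bu\mapsto\Phi(\bu)+L\norm{\bu-\bz}^2$ is $L$-strongly convex. Hence $\bu^*(\bz)$ is unique and $\Phi_{2L}$ is differentiable by Danskin's theorem, with $\nabla\Phi_{2L}(\bz)=2L(\bz-\bu^*(\bz))$. The first-order optimality condition $\nabla\Phi(\bu^*)+2L(\bu^*-\bz)=0$ then gives the second equality in \eqref{eq:envelope-basic}.

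\textbf{Hessian.} I will apply the implicit function theorem to $F(\bw,\bz):=\nabla f_{\bz}(\bw)=0$. By Proposition~\ref{prop:primal}(i), $\nabla_{\bw}F=\nabla^2 f_{\bz}(\bw)$ is nonsingular. Moreover, $\nabla_{\bz}F=\begin{pmatrix}-2L\bI\\ \bzero\end{pmatrix}$. Therefore $\bw^*$ is $C^1$, and
\[
D\bw^*(\bz)=2L\bigl(\nabla^2 f_{\bz}(\bw^*)\bigr)^{-1}\begin{pmatrix}\bI\\ \bzero\end{pmatrix}.
\]
By the block-inverse formula, the $(\bu,\bu)$ block of $(\nabla^2 f_{\bz})^{-1}$ is the inverse Schur complement $\bM(\bw^*)^{-1}$, so $D\bu^*=2L\bM^{-1}$. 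Differentiating the gradient formula then yields \eqref{eq:envelope-hessian}, and continuity of $\bM(\bw^*(\bz))^{-1}$ in $\bz$ gives $C^2$.

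\textbf{The bound $\bM\succeq L\bI$.} Strong concavity gives $-\nabla^2_{yy}f\succeq\mu\bI\succ0$, so the subtracted term $-\nabla_{uy}(\nabla_{yy})^{-1}\nabla_{yu}$ is PSD. It remains to show $\nabla^2_{uu}f+2L\bI\succeq L\bI$, which follows from $\norm{\nabla^2_{uu}f}\le L$.

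\textbf{Part (i).} We have $\Phi_{2L}\le\Phi$ pointwise by taking $\bu=\bz$, and $\Phi_{2L}(\bz)\ge\Phi(\bu^*(\bz))\ge\inf\Phi$. Hence the two infima are equal, and $\Phi_{2L}(\bz_0)-\inf\Phi_{2L}\le\Phi(\bz_0)-\inf\Phi\le\Delta$.

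\textbf{Part (ii).} The gradient claim is immediate from \eqref{eq:envelope-basic}. For the curvature, I will relate $\nabla^2\Phi(\bu^*)$ to $\bM$. Danskin plus the implicit function theorem for $\by^*(\bx)$ give $\nabla^2\Phi(\bx)=\bM(\bx,\by^*(\bx))-2L\bI$. At $\bu=\bu^*(\bz)$ we have $\by^*(\bz)=\by^*(\bu^*(\bz))$, so the point matches, and
\[
\nabla^2\Phi_{2L}(\bz)=2L\bI-4L^2\bigl(\nabla^2\Phi(\bu^*)+2L\bI\bigr)^{-1}.
\]
Since $\bM\succeq L\bI$, the matrix $\bA:=\nabla^2\Phi(\bu^*)+2L\bI$ is positive definite, and the scalar map $t\mapsto 2L-4L^2/t$ is increasing on $t>0$. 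The condition $\nabla^2\Phi_{2L}\succeq-\eta\bI$ becomes $4L^2\bA^{-1}\preceq(2L+\eta)\bI$, that is, $\bA\succeq \frac{4L^2}{2L+\eta}\bI$. This gives $\nabla^2\Phi(\bu^*)\succeq\bigl(\frac{4L^2}{2L+\eta}-2L\bigr)\bI=-\frac{2L\eta}{2L+\eta}\bI\succeq-\eta\bI$.

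The main obstacle is the Hessian identification: correctly extracting the $\bu$-block of the inverse saddle Hessian as $\bM^{-1}$, and verifying that the matrix $\bM$ evaluated at $\bw^*(\bz)$ coincides with $\nabla^2\Phi(\bu^*)+2L\bI$. Everything else is routine.
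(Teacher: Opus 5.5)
Your proposal is correct and follows essentially the same route as the paper: the implicit function theorem on $\nabla f_{\bz}(\bw^*(\bz))=\bzero$, block inversion to get $\partial\bu^*/\partial\bz=2L\bM^{-1}$, the bound $\bM\succeq L\bI_{d_x}$ from $\nabla^2_{uu}f\succeq -L\bI_{d_x}$, and the same eigenvalue argument for (ii). The only differences are that you derive the gradient identity and $\nabla^2\Phi(\bu^*(\bz))=\bM(\bw^*(\bz))-2L\bI_{d_x}$ yourself, whereas the paper cites \citet{grimmer2023landscape} and \citet{chen2023cubic}, and one aside needs fixing: $\Phi$ does not have an $L$-Lipschitz gradient (it is only $(1+\kappa)L$-smooth in general), but what your argument actually uses, $\nabla^2\Phi\succeq -L\bI_{d_x}$, is correct.
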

Proposition~\ref{prop:envelope-basic} shows that it suffices to find a second-order stationary point of $\Phi_{2L}(\cdot)$.
The following lemma gives the regularity of the saddle-point map.
\begin{lemma}\label{lem:saddle-map}
Under Assumptions~\ref{ass:smooth}--\ref{ass:concave}, the saddle-point map $\bw^*(\cdot)$ is continuously differentiable and for every $\bz,\bz'\in\R^{d_x}$, it holds that
\begin{equation}\label{eq:global-displacement}
\norm{\bw^*(\bz)-\bw^*(\bz')}
\le \sqrt{4+\kappa}\norm{\bz-\bz'}.
\end{equation}
Moreover, its distance from any point $\bw\in\R^{d_x+d_y}$ is bounded by the gradient norm:
\begin{equation}\label{eq:residual-to-distance}
\norm{\bw-\bw^*(\bz)}\le \mu^{-1}\norm{\nabla f_{\bz}(\bw)}.
\end{equation}
\end{lemma}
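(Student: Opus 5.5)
The plan is to get differentiability from the implicit function theorem, bound the Jacobian of $\bw^*(\cdot)$ through a block-inverse (Schur complement) computation, integrate that bound along a segment to obtain \eqref{eq:global-displacement}, and derive \eqref{eq:residual-to-distance} from strong monotonicity of the saddle operator.

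\textbf{Differentiability.} The saddle point is characterized by $\nabla f_{\bz}(\bw^*(\bz))=\bzero$. The map $(\bz,\bw)\mapsto\nabla f_{\bz}(\bw)$ is $C^1$, and its partial Jacobian in $\bw$ is $\nabla^2 f_{\bz}(\bw)$, which is nonsingular by Proposition~\ref{prop:primal}(i). Its partial Jacobian in $\bz$ is $\begin{pmatrix}-2L\bI_{d_x}\\ \bzero\end{pmatrix}$. The implicit function theorem then shows that $\bw^*(\cdot)$ is $C^1$ with
\[
D\bw^*(\bz)=2L\bigl(\nabla^2 f_{\bz}(\bw^*)\bigr)^{-1}\begin{pmatrix}\bI_{d_x}\\ \bzero\end{pmatrix}.
\]

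\textbf{Jacobian bound.} Write $\nabla^2 f_{\bz}=\begin{pmatrix}\bA&\bB\\ \bB^\top&\bA_{yy}\end{pmatrix}$ with $\bA=\nabla^2_{uu}f+2L\bI$, $\bB=\nabla^2_{uy}f$ and $\bA_{yy}=\nabla^2_{yy}f\preceq-\mu\bI$. The block-inverse formula gives the first block column of the inverse as $\bigl(\bM^{-1};\,-\bA_{yy}^{-1}\bB^\top\bM^{-1}\bigr)$, where $\bM$ is the Schur complement \eqref{eq:schur-complement}. Fix $\bv$ and set $\bu=2L\bM^{-1}\bv$, which is the $\bu$-component of $D\bw^*(\bz)\bv$.
\begin{itemize}
\item Since $\bM\succeq L\bI$ (Proposition~\ref{prop:envelope-basic}), we get $\norm{\bu}\le 2\norm{\bv}$.
\item Let $\bs:=\bB(-\bA_{yy})^{-1}\bB^\top\succeq 0$. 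Then
\[
\norm{\bA_{yy}^{-1}\bB^\top\bu}^2\le\mu^{-1}\,\bu^\top\bs\,\bu .
\]
\item Because $\nabla^2_{uu}f\succeq-L\bI$, we have $\bM=\nabla^2_{uu}f+2L\bI+\bs\succeq L\bI+\bs$, so $\bs\preceq\bM-L\bI$.
\item Hence
\[
\bu^\top\bs\,\bu\le 4L^2\,\bv^\top\bigl(\bM^{-1}-L\bM^{-2}\bigr)\bv .
\]
\item On eigenvalues $m\ge L$, the scalar function $4L^2(1/m-L/m^2)$ is maximized at $m=2L$, with value $L$.
\end{itemize}
So the $\by$-component of $D\bw^*(\bz)\bv$ has squared norm at most $\mu^{-1}L\norm{\bv}^2=\kappa\norm{\bv}^2$. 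Adding the two components gives $\norm{D\bw^*(\bz)}\le\sqrt{4+\kappa}$.

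\textbf{Integration.} Integrating $D\bw^*$ along the segment from $\bz'$ to $\bz$ yields \eqref{eq:global-displacement}.

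\textbf{Residual bound.} Let $F(\bw)=(\nabla_u f_{\bz}(\bw),-\nabla_y f_{\bz}(\bw))$. By Proposition~\ref{prop:primal}(i), $f_{\bz}$ is $L$-strongly convex in $\bu$ and $\mu$-strongly concave in $\by$, so $F$ is $\min(L,\mu)=\mu$-strongly monotone. Since $F(\bw^*(\bz))=\bzero$,
\[
\mu\norm{\bw-\bw^*}^2\le\ip{F(\bw)}{\bw-\bw^*}\le\norm{\nabla f_{\bz}(\bw)}\norm{\bw-\bw^*},
\]
which gives \eqref{eq:residual-to-distance}.

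The main obstacle is getting the constant $\kappa$ rather than $4\kappa$ in the $\by$-block estimate. The crude bound $\bs\preceq\bM$ loses a factor of $4$, and this is why the sharper comparison $\bs\preceq\bM-L\bI$, followed by the scalar maximization over $m\ge L$, is needed.
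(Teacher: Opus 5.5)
Your proposal is correct and follows the paper's proof essentially step for step: the implicit function theorem for the Jacobian, the bound $\norm{2L\bM^{-1}}\le 2$ on the upper block, and the sharper comparison $\bs\preceq\bM-L\bI$ on the lower block to get $\kappa$ rather than $4\kappa$ (your scalar maximization at $m=2L$ is exactly the paper's completed-square identity $\bM^{-1}-L\bM^{-2}=\frac{1}{4L}\bI-L(\bM^{-1}-\frac{1}{2L}\bI)^2$), followed by integration along the segment. Your $\mu$-strong-monotonicity argument for \eqref{eq:residual-to-distance} is a clean justification of the one-line inequality the paper states without elaboration.
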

The following lemma bounds the Hessian Lipschitz constant of $\Phi_{2L}(\cdot)$.
\begin{lemma}\label{lem:envelope}
Under Assumptions~\ref{ass:smooth}--\ref{ass:concave}, the Hessian of $\Phi_{2L}(\cdot)$ is $\widetilde{\rho}$-Lipschitz continuous, where $\widetilde{\rho}:=\rho(4+\kappa)^{3/2}$. Specifically, for every $\bz,\bz'\in\R^{d_x}$,
\begin{equation}\label{eq:H}
\norm{\nabla^2\Phi_{2L}(\bz)-\nabla^2\Phi_{2L}(\bz')}
\le\widetilde{\rho}\norm{\bz-\bz'}.
\end{equation}
\end{lemma}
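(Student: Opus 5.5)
The plan is to work directly with the formula \eqref{eq:envelope-hessian} and avoid the crude chain rule through $\Phi$. Write $\bw=\bw^*(\bz)$ and $\bw'=\bw^*(\bz')$. Then
\[
\nabla^2\Phi_{2L}(\bz)-\nabla^2\Phi_{2L}(\bz')=4L^2\bigl(\bM(\bw')^{-1}-\bM(\bw)^{-1}\bigr).
\]
A naive bound uses $\norm{\bM^{-1}}\le L^{-1}$ twice together with a Lipschitz estimate for the Schur complement $\bM(\cdot)$. Differentiating $(\nabla^2_{yy}f)^{-1}$ costs a factor $\kappa^2$, so this would give $\rho\kappa^2\sqrt{\kappa}$, which is too large. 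The minimax structure must therefore enter through a better representation of $\bM^{-1}$.

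The key identity is that $\bM(\bw)^{-1}$ is the $(\bu,\bu)$ block of $(\nabla^2f_{\bz}(\bw))^{-1}$. This is the standard block-inverse formula, since $\bM(\bw)$ is exactly the Schur complement of the $\by\by$ block in $\nabla^2f_{\bz}(\bw)$, and $\nabla^2f_{\bz}$ does not depend on $\bz$ as a matrix-valued function of $\bw$. Let $\bA(\bw):=\nabla^2f_{\bz}(\bw)$ and let $\bP=[\bI_{d_x},\bzero]$. Then $\bM(\bw)^{-1}=\bP\bA(\bw)^{-1}\bP^\top$, and the resolvent identity gives
\[
\bM(\bw')^{-1}-\bM(\bw)^{-1}=\bP\bA(\bw')^{-1}\bigl(\bA(\bw)-\bA(\bw')\bigr)\bA(\bw)^{-1}\bP^\top .
\]

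The hardest step is bounding $\norm{\bA(\bw)^{-1}\bP^\top}$ by something like $1/\sqrt{L\mu}$. The generic bound \eqref{eq:regularized-inverse-bound} gives only $1/\mu$, which would yield $4L^2\cdot\rho\norm{\bw-\bw'}/\mu^2$, i.e. $\kappa^{2.5}$. To get the sharper bound, solve $\bA\begin{pmatrix}\ba\\\bv\end{pmatrix}=\begin{pmatrix}\bxi\\\bzero\end{pmatrix}$. Write $\bB=\nabla^2_{uy}f$ and $\bH=-\nabla^2_{yy}f\succeq\mu\bI$. Then $\ba=\bM^{-1}\bxi$ and $\bv=\bH^{-1}\bB^\top\ba$, so $\norm{\ba}\le\norm{\bxi}/L$. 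For $\bv$, use $\bM\succeq L\bI+\bB\bH^{-1}\bB^\top$ (valid since $\nabla^2_{uu}f+L\bI\succeq0$). This gives $\ba^\top\bB\bH^{-1}\bB^\top\ba\le\ba^\top\bM\ba=\ip{\ba}{\bxi}\le\norm{\bxi}^2/L$. Hence $\norm{\bH^{-1/2}\bB^\top\ba}\le\norm{\bxi}/\sqrt L$, and
\[
\norm{\bv}\le\mu^{-1/2}\norm{\bH^{-1/2}\bB^\top\ba}\le\norm{\bxi}/\sqrt{L\mu}.
\]
Combining the two pieces,
\[
\norm{\bA^{-1}\bP^\top}\le\sqrt{L^{-2}+(L\mu)^{-1}}=\frac{\sqrt{1+\kappa}}{L}.
\]
By symmetry of $\bA$, the same bound holds for $\bP\bA(\bw')^{-1}$.

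Assembling the pieces, the Hessian-Lipschitz bound in \eqref{eq:regularized-hessian-bounds} and the displacement estimate \eqref{eq:global-displacement} of Lemma~\ref{lem:saddle-map} give
\[
\norm{\nabla^2\Phi_{2L}(\bz)-\nabla^2\Phi_{2L}(\bz')}\le4L^2\cdot\frac{1+\kappa}{L^2}\,\rho\norm{\bw-\bw'}\le4\rho(1+\kappa)\sqrt{4+\kappa}\,\norm{\bz-\bz'}.
\]
This is of order $\rho\kappa^{3/2}$. To land exactly on the stated constant $\rho(4+\kappa)^{3/2}$, the constants need sharpening, and $4(1+\kappa)\le(4+\kappa)$ fails, so I will refine the estimate. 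One option is to use $\bM\succeq 2L\bI+\dots$ in the "$\nabla^2_{uu}f+L\bI$" step; another is to measure $\norm{\bw-\bw'}$ in a weighted way matched to the $\bu$/$\by$ blocks.

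The cleanest route is to avoid the factor 4 altogether. Write $\nabla^2\Phi_{2L}(\bz)=2L\bI-4L^2\bP\bA^{-1}\bP^\top$ and note that $D\bw^*(\bz)=2L\bA^{-1}\bP^\top$, which follows from implicit differentiation of $\nabla f_{\bz}(\bw^*(\bz))=0$. Then
\[
4L^2\bP\bA(\bw')^{-1}\bigl(\bA(\bw)-\bA(\bw')\bigr)\bA(\bw)^{-1}\bP^\top=D\bw^*(\bz')^\top\bigl(\bA(\bw)-\bA(\bw')\bigr)D\bw^*(\bz).
\]
The norm of $D\bw^*$ is at most $\sqrt{4+\kappa}$, by the same computation behind \eqref{eq:global-displacement}, and $\norm{\bw-\bw'}\le\sqrt{4+\kappa}\norm{\bz-\bz'}$. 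These give exactly $\rho(4+\kappa)^{3/2}$. So the main work is the block-inverse bound on $\norm{D\bw^*}$ (equivalently $\norm{\bA^{-1}\bP^\top}$), and the rest is bookkeeping.
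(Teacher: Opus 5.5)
Your final route is the paper's proof. The paper takes $\bP=[2L\bI_{d_x},\bzero_{d_x\times d_y}]^\top$ and writes $4L^2\bM(\bw)^{-1}=\bP^\top(\nabla^2 f_{\bz}(\bw))^{-1}\bP$, so $(\nabla^2 f_{\bz}(\bw^*(\bz)))^{-1}\bP$ is exactly your $D\bw^*(\bz)$; it then applies the resolvent identity, bounds each outer factor by $\sqrt{4+\kappa}$ via \eqref{eq:T-bound}, and finishes with the $\rho$-Lipschitz Hessian and \eqref{eq:global-displacement}.

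One correction to your narrative: passing to $D\bw^*$ is only a rewriting of the same quantity and cannot by itself remove the factor $4$. The constant $4+\kappa$ comes from the sharper bound $\norm{D\bw^*}\le\sqrt{4+\kappa}$, which does follow from \eqref{eq:global-displacement} since $\bw^*\in C^1$. Your own block computation gives only $2\sqrt{1+\kappa}$ because it discards the $-L\norm{\ba}^2$ term. Keeping that term yields $\ba^\top\bB\bH^{-1}\bB^\top\ba\le\ip{\ba}{\bxi}-L\norm{\ba}^2\le\norm{\bxi}^2/(4L)$, which is exactly the paper's estimate $\bM^{-1}-L\bM^{-2}\preceq\frac{1}{4L}\bI_{d_x}$.
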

\begin{proof}
We write $\bP:=[2L\bI_{d_x},\bzero_{d_x\times d_y}]^\top$. 
Block elimination gives
\[
4L^2\bM(\bw)^{-1}=\bP^\top\bigl(\nabla^2f_{\bz}(\bw)\bigr)^{-1}\bP.
\]
Thus, for any points $\bw,\bv$, the inverse-difference identity gives
\begin{equation}\label{eq:envelope-hessian-difference}
\begin{aligned}
&4L^2\bigl(\bM(\bv)^{-1}-\bM(\bw)^{-1}\bigr)\\
&\quad=\bP^\top\bigl(\nabla^2f_{\bz}(\bw)\bigr)^{-1}
\bigl(\nabla^2f_{\bz}(\bw)-\nabla^2f_{\bz}(\bv)\bigr)
\bigl(\nabla^2f_{\bz}(\bv)\bigr)^{-1}\bP.
\end{aligned}
\end{equation}
Using~\eqref{eq:envelope-hessian-difference} and symmetry of the Hessians, we have
\begin{equation}\label{eq:schur-inverse-lipschitz}
\begin{aligned}
&4L^2\norm{\bM(\bv)^{-1}-\bM(\bw)^{-1}}\\
&\quad\le
\norm{\bigl(\nabla^2f_{\bz}(\bw)\bigr)^{-1}\bP}
\norm{\nabla^2f_{\bz}(\bw)-\nabla^2f_{\bz}(\bv)}
\norm{\bigl(\nabla^2f_{\bz}(\bv)\bigr)^{-1}\bP}\\
&\quad\stackrel{\eqref{eq:T-bound}}{\le}
(4+\kappa)\norm{\nabla^2f_{\bz}(\bw)-\nabla^2f_{\bz}(\bv)}\\
&\quad\stackrel{\eqref{eq:regularized-hessian-bounds}}{\le}
\rho (4+\kappa)\norm{\bw-\bv}.
\end{aligned}
\end{equation}
Apply \eqref{eq:schur-inverse-lipschitz} with $\bw=\bw^*(\bz)$ and $\bv=\bw^*(\bz')$ yields
\[
\begin{aligned}
&\norm{\nabla^2\Phi_{2L}(\bz)-\nabla^2\Phi_{2L}(\bz')}\overset{\eqref{eq:envelope-hessian}}{\leq} 4L^2\norm{\bM(\bv)^{-1}-\bM(\bw)^{-1}}
\\
&~~~\overset{\eqref{eq:schur-inverse-lipschitz}}{\le}\rho (4+\kappa)\norm{\bw^*(\bz)-\bw^*(\bz')}
\overset{\eqref{eq:global-displacement}}{\le}\rho (4+\kappa)^{\frac{3}{2}}\norm{\bz-\bz'}.
\end{aligned}
\]
This proves~\eqref{eq:H}.
\end{proof}

\subsection{Solving the Regularized Saddle Problem}\label{sec:saddle-solvers}
We consider the regularized minimax problem
\begin{equation}\label{obj:minimax}
\min_{\bu\in\R^{d_x}}\max_{\by\in\R^{d_y}} f_{\bz}(\bw)
=\min_{\bu\in\R^{d_x}}\max_{\by\in\R^{d_y}}\bigl\{f(\bu,\by)+L\norm{\bu-\bz}^2\bigr\}.
\end{equation}
By Proposition~\ref{prop:primal}, its objective is $L$-strongly convex in $\bu$, $\mu$-strongly concave in $\by$, and has a $3L$-Lipschitz gradient. We seek an approximate solution $\hat{\bw} = (\hat{\bu},\hat{\by})$ such that
\begin{equation}\label{eq:inner-target}
\norm{\nabla f_{\bz}(\hat\bw)}\le\bar\eps.
\end{equation}
\subsubsection{Existing First- and Second-Order Methods}\label{sec:existing-solvers}
We recall the existing methods to solve SC-SC minimax problem~\eqref{obj:minimax}.
For the first-order method, we use the single-loop predictor--corrector extragradient method with damped momentum (PCE-DM) in Algorithm~\ref{alg:pce-dm} \cite{zhang2026near}, which can well handle the unbalanced condition-number between strongly-convexity and strongly-concavity of \eqref{obj:minimax} with a simple single-loop update.
We introduce its oracle complexity in the following lemma.

\begin{algorithm}[htbp]
\caption{$\textnormal{\textsc{PCE-DM}}(\bw_{\rm init};\bz,\bar\eps)$}\label{alg:pce-dm}
\begin{algorithmic}[1]
\State \textbf{Input:} initial point $\bw_{\rm init}=(\bu_{\rm init},\by_{\rm init})$, point $\bz$, and tolerance $\bar\eps>0$.
\State $h\gets\frac{2}{45L}$, $\alpha\gets\frac{8}{8+h\sqrt{L\mu}}$, and $\beta\gets \frac{hL}{8}$.
\State $\bw=(\bu,\by)\gets\bw_{\rm init}$ and $\bv\gets\bzero_{d_y}$.
\While{$\norm{\nabla f_{\bz}(\bw)}>\bar\eps$}
\State $\widetilde{\bu}\gets\bu-h\nabla_u f_{\bz}(\bw)$.
\State $\widetilde{\by}\gets\by+h\bigl(\nabla_y f_{\bz}(\bw)+\bv\bigr)$.
\State $\widetilde{\bw}\gets(\widetilde{\bu},\widetilde{\by})$.
\State $\widetilde{\bv}\gets\alpha\bv+\beta\nabla_y f_{\bz}(\widetilde{\bw})$.
\State $\bu\gets\bu-h\nabla_u f_{\bz}(\widetilde{\bw})$.
\State $\by\gets\by+h\bigl(\nabla_y f_{\bz}(\widetilde{\bw})+\widetilde{\bv}\bigr)$.
\State $\bw\gets(\bu,\by)$.
\State $\bv\gets\alpha\bv+\beta\nabla_y f_{\bz}(\bw)$.
\EndWhile
\State \textbf{return} $\bw$.
\end{algorithmic}
\end{algorithm}
\begin{lemma}[{cf.\ \citet[Algorithm~1 and Theorem~3.7]{zhang2026near}}]\label{lem:pce-dm}
Under Assumptions~\ref{ass:smooth}--\ref{ass:concave}, let $D_{\rm init}:=\norm{\bw_{\rm init}-\bw^*(\bz)}$. When the initial gradient norm exceeds $\bar\eps$, Algorithm~\ref{alg:pce-dm} reaches~\eqref{eq:inner-target} within $T_{\rm PCE}(\bar\eps)$ iterations and at most $2T_{\rm PCE}(\bar\eps)+1$ gradient evaluations, where
\begin{equation}\label{eq:pce-horizon}
T_{\rm PCE}(\bar\eps):=\Bigl\lceil720\sqrt\kappa
\log\frac{2592L^2\kappa^{\frac{3}{2}}D_{\rm init}^2}{\bar\eps^2}\Bigr\rceil.
\end{equation}
\end{lemma}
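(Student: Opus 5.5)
This lemma is cited from \citet[Algorithm~1 and Theorem~3.7]{zhang2026near}. So the plan is to check that problem~\eqref{obj:minimax} satisfies the hypotheses of that theorem and then translate its guarantee into the gradient-norm criterion~\eqref{eq:inner-target}. I would not re-derive the convergence analysis.

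\textbf{Step 1: hypotheses.} By Proposition~\ref{prop:primal}, $f_{\bz}(\cdot)$ is $L$-strongly convex in $\bu$ and $\mu$-strongly concave in $\by$. By \eqref{eq:regularized-hessian-bounds}, its gradient is $3L$-Lipschitz. These are exactly the unbalanced SC-SC assumptions of \citet{zhang2026near}: smoothness $\ell=3L$, moduli $\mu_x=L$ and $\mu_y=\mu$. The parameters $h=2/(45L)$, $\alpha$ and $\beta$ in Algorithm~\ref{alg:pce-dm} are their prescribed choices with $\ell=3L$ substituted.

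\textbf{Step 2: contraction.} Their Theorem~3.7 gives a linear rate for a Lyapunov quantity. The rate should be $(1-c\sqrt{\mu_x\mu_y}\,h)$ per iteration, i.e.\ $1-\Theta(1/\sqrt\kappa)$, since $h\sqrt{L\mu}=\Theta(\sqrt{\mu/L})$. The Lyapunov quantity dominates $\norm{\bw_t-\bw^*(\bz)}^2$, up to a factor of order $\kappa^{1/2}$ coming from the ratio of the weights on the $\bu$ and $\by$ blocks and on the momentum $\bv$. It is bounded at initialization by a constant multiple of $D_{\rm init}^2$, using $\bv_0=\bzero$. Combining these gives
\[
\norm{\bw_t-\bw^*(\bz)}^2\le C\kappa^{3/2}\bigl(1-\tfrac{1}{720\sqrt\kappa}\bigr)^{t}D_{\rm init}^2
\]
for an explicit constant $C$.

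\textbf{Step 3: convert to gradient norm and count.} By $3L$-smoothness and $\nabla f_{\bz}(\bw^*(\bz))=0$, we have $\norm{\nabla f_{\bz}(\bw_t)}^2\le 9L^2\norm{\bw_t-\bw^*(\bz)}^2$. So the stopping test succeeds once $9CL^2\kappa^{3/2}e^{-t/(720\sqrt\kappa)}D_{\rm init}^2\le\bar\eps^2$. Matching $9C=2592$ gives $C=288$, and this yields the horizon~\eqref{eq:pce-horizon}. For the oracle count, each loop iteration uses two gradient evaluations, at $\widetilde\bw$ and at the new $\bw$. The gradient at the new $\bw$ is reused both for the momentum update and for the next stopping check. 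One extra evaluation is needed at initialization, which gives $2T_{\rm PCE}+1$.

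\textbf{Main obstacle.} The delicate point is Step 2: extracting the exact constants $720$ and $2592$ and the $\kappa^{3/2}$ prefactor from the Lyapunov function in \citet{zhang2026near}. This requires tracking how its weights depend on $h$, $\alpha$, $\beta$ and on the ratio $L/\mu$. I would first rewrite their Lyapunov function explicitly with $\ell=3L$, $\mu_x=L$, $\mu_y=\mu$, then bound its two-sided equivalence constants with $\norm{\bw-\bw^*}^2$ directly. If the constants turn out looser, only the number inside the logarithm changes.
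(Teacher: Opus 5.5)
Your proposal is correct and follows the paper's proof. The paper reads the distance bound $\norm{\bw_t-\bw^*(\bz)}^2\le 288\kappa^{3/2}\bigl(1+\frac{1}{360\sqrt\kappa}\bigr)^{-t}D_{\rm init}^2$ directly off equation~(24) of \citet{zhang2026near}, so the constant $288$ and the $\kappa^{3/2}$ prefactor are cited rather than rebuilt from the Lyapunov function; that is just as well, since your heuristic $\kappa^{1/2}$ accounting would not by itself explain the $\kappa^{3/2}$. It then uses $\log\bigl(1+\frac{1}{360\sqrt\kappa}\bigr)\ge\frac{1}{720\sqrt\kappa}$, the same $9L^2$ smoothness conversion, and the same $2T_{\rm PCE}(\bar\eps)+1$ gradient count as your Step~3.
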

Next, we recall Newton's method in Algorithm~\ref{alg:newton} and its complexity.

\begin{algorithm}[htbp]
\caption{$\textnormal{\textsc{Newton}}(\bw_{\rm init};\bz,\bar\eps)$}\label{alg:newton}
\begin{algorithmic}[1]
\State \textbf{Input:} initial point $\bw_{\rm init}$, point $\bz$, and tolerance $\bar\eps>0$.
\State $\bw\gets\bw_{\rm init}$.
\While{$\norm{\nabla f_{\bz}(\bw)}>\bar\eps$}
\State $\bw\gets\bw-\bigl(\nabla^2 f_{\bz}(\bw)\bigr)^{-1}\nabla f_{\bz}(\bw)$.
\EndWhile
\State \textbf{return} $\bw$.
\end{algorithmic}
\end{algorithm}

\begin{lemma}[{cf.\ the local Newton analysis of \citet{huang2022cubic}}]\label{lem:local-newton}
Under Assumptions~\ref{ass:smooth}--\ref{ass:concave}, suppose
\begin{equation}\label{eq:local-newton-condition}
\norm{\bw_{\rm init}-\bw^*(\bz)}\le\frac{\mu}{12\rho}.
\end{equation}
When the initial gradient norm exceeds $\bar\eps$, Algorithm~\ref{alg:newton} reaches~\eqref{eq:inner-target} using at most $T_{\rm N}(\bar\eps)$ Hessian evaluations and $T_{\rm N}(\bar\eps)+1$ gradient evaluations, where
\begin{equation}\label{eq:local-newton-horizon}
T_{\rm N}(\bar\eps):=
\left\lceil\log_2\log_2\frac{6L\mu}{\rho\bar\eps}\right\rceil.
\end{equation}
\end{lemma}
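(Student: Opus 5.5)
The plan is the standard quadratic-convergence argument for Newton's method on the gradient map $F(\bw):=\nabla f_{\bz}(\bw)$. Its Jacobian $\nabla^2 f_{\bz}$ is nonsingular with $\norm{(\nabla^2 f_{\bz})^{-1}}\le\mu^{-1}$ and $\rho$-Lipschitz, by Proposition~\ref{prop:primal}. Write $\bw_t$ for the iterates, $\bw^*=\bw^*(\bz)$, and $r_t:=\norm{\bw_t-\bw^*}$.

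\textbf{Step 1: one-step contraction.} Using $F(\bw^*)=\bzero$, write
\[
\bw_{t+1}-\bw^*=\bigl(\nabla^2 f_{\bz}(\bw_t)\bigr)^{-1}\int_0^1\bigl(\nabla^2 f_{\bz}(\bw_t)-\nabla^2 f_{\bz}(\bw^*+\tau(\bw_t-\bw^*))\bigr)(\bw_t-\bw^*)\,d\tau .
\]
Together with \eqref{eq:regularized-inverse-bound} and \eqref{eq:regularized-hessian-bounds}, this gives $r_{t+1}\le\frac{\rho}{2\mu}r_t^2$. Set $q_t:=\frac{\rho}{2\mu}r_t$, so that $q_{t+1}\le q_t^2$. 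Condition \eqref{eq:local-newton-condition} gives $q_0\le 1/24$. By induction the iterates stay in the ball and $q_t\le q_0^{2^t}\le 24^{-2^t}\le 2^{-2^t}$.

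\textbf{Step 2: convert to gradient norm.} The gradient $F$ is $3L$-Lipschitz by \eqref{eq:regularized-hessian-bounds}, so
\[
\norm{F(\bw_t)}\le 3L r_t=\frac{6L\mu}{\rho}q_t\le\frac{6L\mu}{\rho}2^{-2^t}.
\]
The right-hand side is at most $\bar\eps$ once $2^t\ge\log_2\frac{6L\mu}{\rho\bar\eps}$, that is, once $t\ge\log_2\log_2\frac{6L\mu}{\rho\bar\eps}$. The hypothesis $\norm{F(\bw_{\rm init})}>\bar\eps$ forces $\frac{6L\mu}{\rho\bar\eps}>1$, since $\norm{F(\bw_0)}\le 3Lr_0\le L\mu/(4\rho)$. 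If the $\log_2\log_2$ expression is nonpositive, a single step already suffices: we have $q_1\le q_0^2$, which gives an extra margin, and the ceiling still yields at least one iteration because the loop runs at least once. So the loop terminates after at most $T_{\rm N}(\bar\eps)$ iterations.

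\textbf{Step 3: counting oracles.} Each iteration uses one Hessian evaluation and one gradient evaluation. The final stopping check needs one more gradient, which gives $T_{\rm N}$ Hessians and $T_{\rm N}+1$ gradients.

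The only delicate point is the boundary case where $\log_2\log_2(\cdot)$ is at most $0$, i.e.\ $\frac{6L\mu}{\rho\bar\eps}\le 2$. There we would use $\norm{F(\bw_1)}\le 3Lr_1$ together with $q_1\le q_0^2\le 1/576$. This gives $\norm{F(\bw_1)}\le\frac{6L\mu}{\rho}\cdot\frac{1}{576}$, which is smaller than $\bar\eps$ in that regime. Hence one iteration suffices, consistent with reading the ceiling as at least $1$.
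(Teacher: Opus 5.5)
Your argument is correct and is essentially the paper's proof: the Taylor remainder together with $\norm{(\nabla^2 f_{\bz})^{-1}}\le\mu^{-1}$ gives $r_{t+1}\le\frac{\rho}{2\mu}r_t^2$, hence $r_t\le\frac{2\mu}{\rho}2^{-2^t}$, and the $3L$-Lipschitz gradient turns this into $\norm{\nabla f_{\bz}(\bw_t)}\le\frac{6L\mu}{\rho}2^{-2^t}$, with the same oracle count. Your boundary-case discussion is unnecessary, because your own bound $\bar\eps<\norm{\nabla f_{\bz}(\bw_{\rm init})}\le L\mu/(4\rho)$ already gives $\frac{6L\mu}{\rho\bar\eps}>24$; hence $T_{\rm N}(\bar\eps)\ge 3$, the case $\log_2\log_2(\cdot)\le 0$ cannot occur, and no reinterpretation of the ceiling is needed.
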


\subsubsection{The NewtonTrack Method}\label{sec:newton-track}
\label{sec:newton-track-method}
The first-order methods converge from an arbitrary starting point, but their costs depend polynomially on the condition number. 
Newton's method requires only a doubly logarithmic number of local iterations, but can only converge locally.

In this section, we solve the regularized minimax problem~\eqref{obj:minimax}
using an approximate saddle point $\bar{\bw}$ of $f_{\bar{\bz}}(\cdot)$ from  a reference point $\bar{\bz}$, with $\norm{\nabla f_{\bar{\bz}}(\bar{\bw})}\le\bar\eps$. 
The following lemma shows that $\bar{\bw}$ satisfies the local condition~\eqref{eq:local-newton-condition} of Newton's method when $\bar\eps$ and $\norm{\bz-\bar{\bz}}$ are sufficiently small.

\begin{lemma}\label{lem:warm-start}
Under Assumptions~\ref{ass:smooth}--\ref{ass:concave}, choose $R,\bar\eps$ such that
\begin{equation}\label{eq:tracking-conditions}
0<\bar\eps\le\frac{\mu^2}{48\rho}
\quad\text{and}\quad
0<R\le\frac{\mu}{16\rho\sqrt{4+\kappa}}.
\end{equation}
If $\norm{\nabla f_{\bar{\bz}}(\bar{\bw})}\le\bar\eps$ and $\norm{\bz-\bar{\bz}}\le R$, then
\begin{equation}\label{eq:warm-radius}
\norm{\bar{\bw}-\bw^*(\bz)}\le\frac{\mu}{12\rho}.
\end{equation}
\end{lemma}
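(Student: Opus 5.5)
The plan is a triangle inequality through the exact saddle point of the reference problem:
\[
\norm{\bar\bw-\bw^*(\bz)}\le\norm{\bar\bw-\bw^*(\bar\bz)}+\norm{\bw^*(\bar\bz)-\bw^*(\bz)}.
\]
I will bound each term by one of the two estimates in Lemma~\ref{lem:saddle-map}, then add the bounds.

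For the first term, apply the residual-to-distance bound~\eqref{eq:residual-to-distance} with base point $\bar\bz$ and test point $\bar\bw$. Using the hypothesis $\norm{\nabla f_{\bar\bz}(\bar\bw)}\le\bar\eps$ and then the first condition in~\eqref{eq:tracking-conditions}, this gives
\[
\norm{\bar\bw-\bw^*(\bar\bz)}\le\mu^{-1}\bar\eps\le\frac{\mu}{48\rho}.
\]

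For the second term, apply the global displacement bound~\eqref{eq:global-displacement}. Together with $\norm{\bz-\bar\bz}\le R$ and the second condition in~\eqref{eq:tracking-conditions}, it yields
\[
\norm{\bw^*(\bar\bz)-\bw^*(\bz)}\le\sqrt{4+\kappa}\,R\le\frac{\mu}{16\rho}.
\]

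Adding the two bounds gives
\[
\frac{\mu}{48\rho}+\frac{\mu}{16\rho}=\frac{\mu}{48\rho}+\frac{3\mu}{48\rho}=\frac{4\mu}{48\rho}=\frac{\mu}{12\rho},
\]
which is exactly~\eqref{eq:warm-radius}. There is no real obstacle here. The one point to check is that the residual bound~\eqref{eq:residual-to-distance} holds globally for every $\bw$ and every base point, not only locally, and Lemma~\ref{lem:saddle-map} states it in that form. The constants in~\eqref{eq:tracking-conditions} are chosen so that the two contributions sum exactly to the Newton radius of Lemma~\ref{lem:local-newton}.
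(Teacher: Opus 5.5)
Your proposal is correct and is essentially the paper's own proof. Both use the triangle inequality through $\bw^*(\bar\bz)$, bound the first term by $\bar\eps/\mu\le\mu/(48\rho)$ via~\eqref{eq:residual-to-distance} and the second by $\sqrt{4+\kappa}\,R\le\mu/(16\rho)$ via~\eqref{eq:global-displacement}, and sum the two to get $\mu/(12\rho)$.
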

\begin{proof}
By~\eqref{eq:residual-to-distance} and~\eqref{eq:global-displacement}, we have
\[
\begin{aligned}
\norm{\bar{\bw}-\bw^*(\bz)}
&\le\norm{\bar{\bw}-\bw^*(\bar{\bz})}+\norm{\bw^*(\bar{\bz})-\bw^*(\bz)}\\
&\le \frac{\bar\eps}{\mu}+\sqrt{4+\kappa}R
\le\frac{\mu}{48\rho}+\frac{\mu}{16\rho}
=\frac{\mu}{12\rho}.
\end{aligned}
\]
\end{proof}
When $\norm{\bz-\bar{\bz}}>R$, we divide the segment from $\bar{\bz}$ to $\bz$ into $m=\lceil\norm{\bz-\bar{\bz}}/R\rceil$ pieces. We set $\bxi_0=\bar{\bz}$ and $\bxi_j=\bar{\bz}+(j/m)(\bz-\bar{\bz})$ for $j=1,\ldots,m$, so that $\bxi_m=\bz$ and $\norm{\bxi_j-\bxi_{j-1}}\le R$. We approximately solve the regularized saddle problems in sequence, using the approximate solution of $f_{\bxi_{j-1}}(\cdot)$ to initialize Newton's method for the problem with objective $f_{\bxi_j}(\cdot)$. We call this method NewtonTrack and formally present it in Algorithm~\ref{alg:newton-track}.
The following lemma provides the oracle complexity of NewtonTrack.
\begin{algorithm}[htbp]
\caption{$\textnormal{\textsc{NewtonTrack}}(\bar{\bw};\bar{\bz},\bz,\bar\eps,R)$}\label{alg:newton-track}
\begin{algorithmic}[1]
\State \textbf{Input:} points $\bar{\bz},\bz$, an approximate saddle point $\bar{\bw}$ of $f_{\bar{\bz}}(\cdot)$ with $\norm{\nabla f_{\bar{\bz}}(\bar{\bw})}\le\bar\eps$, and $\bar\eps,R$ satisfying~\eqref{eq:tracking-conditions}.
\State $\bw_0\gets\bar{\bw}$, $\bxi_0\gets\bar{\bz}$, and $m\gets\left\lceil\frac{\norm{\bz-\bar{\bz}}}{R}\right\rceil$.
\For{$j=1,\ldots,m$}
\State $\bxi_j\gets\bar{\bz}+\frac{j}{m}(\bz-\bar{\bz})$.
\State $\bw_j\gets\textnormal{\textsc{Newton}}(\bw_{j-1};\bxi_j,\bar\eps)$.
\EndFor
\State \textbf{return} $\bw_m$.
\end{algorithmic}
\end{algorithm}

\begin{lemma}\label{lem:tracking}
Under Assumptions~\ref{ass:smooth}--\ref{ass:concave}, let $R,\bar\eps$ satisfy~\eqref{eq:tracking-conditions}. If $\norm{\nabla f_{\bar{\bz}}(\bar{\bw})}\le\bar\eps$, Algorithm~\ref{alg:newton-track} returns an approximate solution $\hat{\bw}=\bw_m$ satisfying $\norm{\nabla f_{\bz}(\hat{\bw})}\le\bar\eps$. With $m=\lceil\norm{\bz-\bar{\bz}}/R\rceil$, it uses at most $m T_{\rm N}(\bar\eps)$ Hessian evaluations and $m(T_{\rm N}(\bar\eps)+1)$ gradient evaluations, where $T_{\rm N}(\bar\eps)$ is given by~\eqref{eq:local-newton-horizon}.
\end{lemma}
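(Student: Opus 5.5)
The proof is an induction over the tracking steps $j=0,1,\ldots,m$. The invariant is that $\norm{\nabla f_{\bxi_j}(\bw_j)}\le\bar\eps$. For $j=0$ it holds by hypothesis, because $\bw_0=\bar{\bw}$ and $\bxi_0=\bar{\bz}$.

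For the inductive step, assume $\norm{\nabla f_{\bxi_{j-1}}(\bw_{j-1})}\le\bar\eps$. By construction of the grid,
\[
\norm{\bxi_j-\bxi_{j-1}}=\norm{\bz-\bar{\bz}}/m\le R,
\]
since $m=\lceil\norm{\bz-\bar{\bz}}/R\rceil$. We apply Lemma~\ref{lem:warm-start} with reference point $\bar{\bz}\leftarrow\bxi_{j-1}$, approximate saddle point $\bar{\bw}\leftarrow\bw_{j-1}$, and target $\bz\leftarrow\bxi_j$. This is allowed because $R$ and $\bar\eps$ satisfy~\eqref{eq:tracking-conditions}. It gives $\norm{\bw_{j-1}-\bw^*(\bxi_j)}\le\mu/(12\rho)$, which is exactly the local condition~\eqref{eq:local-newton-condition} for the problem with objective $f_{\bxi_j}(\cdot)$.

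Two cases then arise for the call $\textnormal{\textsc{Newton}}(\bw_{j-1};\bxi_j,\bar\eps)$.
\begin{itemize}
\item If $\norm{\nabla f_{\bxi_j}(\bw_{j-1})}\le\bar\eps$, the while loop does not execute. Then $\bw_j=\bw_{j-1}$, and the cost is one gradient evaluation and no Hessian evaluations.
\item Otherwise, Lemma~\ref{lem:local-newton} applies. It returns $\bw_j$ with $\norm{\nabla f_{\bxi_j}(\bw_j)}\le\bar\eps$, using at most $T_{\rm N}(\bar\eps)$ Hessian evaluations and $T_{\rm N}(\bar\eps)+1$ gradient evaluations.
\end{itemize}
In both cases the invariant holds at step $j$, and the per-step cost is bounded by $T_{\rm N}(\bar\eps)$ Hessians and $T_{\rm N}(\bar\eps)+1$ gradients.

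At $j=m$ we have $\bxi_m=\bz$, so $\hat{\bw}=\bw_m$ satisfies $\norm{\nabla f_{\bz}(\hat{\bw})}\le\bar\eps$. Summing the per-step costs over the $m$ steps gives the stated totals.

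Two small points need care.
\begin{itemize}
\item \textbf{Degenerate case $\bz=\bar{\bz}$.} Then $m=0$, the loop is empty, and the output is $\bar{\bw}$, which satisfies the bound by hypothesis at zero cost. This is consistent with the claimed bounds.
\item \textbf{Quantifiers in Lemma~\ref{lem:warm-start}.} That lemma is stated for arbitrary reference and target points, with only the two conditions $\norm{\nabla f_{\bar{\bz}}(\bar{\bw})}\le\bar\eps$ and $\norm{\bz-\bar{\bz}}\le R$. Re-instantiating it at each step is therefore legitimate; this is the only step that requires attention, and no real obstacle arises.
\end{itemize}
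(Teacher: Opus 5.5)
Your proposal is correct and follows the paper's proof. Both argue by induction on the invariant $\norm{\nabla f_{\bxi_j}(\bw_j)}\le\bar\eps$, re-instantiate Lemma~\ref{lem:warm-start} at $(\bw_{j-1},\bxi_{j-1},\bxi_j)$, invoke Lemma~\ref{lem:local-newton}, and sum the per-call costs. Your explicit handling of the case where the Newton loop does not execute, and of the degenerate case $m=0$, spells out what the paper covers implicitly through the stopping test.
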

\begin{proof}
We prove by induction that the iterates in Algorithm~\ref{alg:newton-track} satisfy
\[
\norm{\nabla f_{\bxi_j}(\bw_j)}\le\bar\eps~~~\text{for}~~~ j=0,\ldots,m.
\]
The input condition gives the claim at $j=0$, since $\bw_0=\bar{\bw}$ and $\bxi_0=\bar{\bz}$. Suppose the claim holds at $j-1$, where $1\le j\le m$. Since $\norm{\bxi_j-\bxi_{j-1}}\le R$, Lemma~\ref{lem:warm-start} gives
\[
\norm{\bw_{j-1}-\bw^*(\bxi_j)}\le\frac{\mu}{12\rho}.
\]
Lemma~\ref{lem:local-newton} and the stopping test in Algorithm~\ref{alg:newton} therefore ensure that the returned iterate $\bw_j$ satisfies
\[
\norm{\nabla f_{\bxi_j}(\bw_j)}\le\bar\eps.
\]
This completes the induction. Since $\hat{\bw}=\bw_m$ and $\bxi_m=\bz$, we obtain
\[
\norm{\nabla f_{\bz}(\hat{\bw})}
=\norm{\nabla f_{\bxi_m}(\bw_m)}\le\bar\eps.
\]
Each Newton call uses at most $T_{\rm N}(\bar\eps)$ Hessian evaluations and $T_{\rm N}(\bar\eps)+1$ gradient evaluations. Summing over the $m$ calls proves the stated oracle bounds.
\end{proof}

\section{Envelope Cubic Regularized Newton and Its Oracle Complexity}\label{sec:algorithm}

Prior second-order methods apply inexact CRN directly to $\Phi(\cdot)$, whose Hessian Lipschitz constant is $\cO(\rho\kappa^3)$ \citep{luo2022,chen2023cubic}.
Lemma~\ref{lem:envelope} instead gives a smaller constant $\widetilde{\rho}=\rho(4+\kappa)^{3/2}$ for the primal envelope $\Phi_{2L}(\cdot)$. 
Thus, it is natural to conduct inexact CRN on the primal envelop $\Phi_{2L}(\cdot)$ instead of $\Phi(\cdot)$.

The following lemma bounds the errors in approximating the envelope derivatives and transfers second-order stationarity to the computed primal point.
\begin{lemma}\label{lem:oracle}
Under Assumptions~\ref{ass:smooth}--\ref{ass:concave}, let $\eps>0$ and set
\begin{equation}\label{eq:parameters}
\widetilde\eps:=\min\Bigl\{\frac{\mu\eps}{256L},\
\frac{\mu\sqrt{\widetilde{\rho}\eps}}{128\rho(4+\kappa)},\
\frac{\mu^2}{48\rho},\
\frac{\mu\sqrt{\widetilde{\rho}\eps}}{4\rho(1+\kappa)^3}\Bigr\}.
\end{equation}
For any point $\bw=(\bu,\by)$ satisfying $\norm{\nabla f_{\bz}(\bw)}\le\widetilde\eps$, we define
\begin{equation}\label{eq:approx-derivatives}
\widehat{\bg}:=2L(\bz-\bu)
\quad\text{and}\quad
\widehat{\bH}:=2L\bI_{d_x}-4L^2\bM(\bw)^{-1}.
\end{equation}
Then, it holds that the matrix $\widehat{\bH}$ is symmetric and the derivative errors satisfy
\begin{equation}\label{eq:model-errors}
\norm{\widehat{\bg}-\nabla\Phi_{2L}(\bz)}\le\frac{\eps}{128}
\quad\text{and}\quad
\norm{\widehat{\bH}-\nabla^2\Phi_{2L}(\bz)}\le\frac{\sqrt{\widetilde{\rho}\eps}}{128}.
\end{equation}
% Thus they satisfy~\eqref{eq:model-contract} for $g(\cdot)=\Phi_{2L}(\cdot)$ and $\rho_g=\widetilde{\rho}$.
Moreover, if $\bz$ is an $(\eps/2,3\sqrt{\widetilde{\rho}\eps}/4)$-second-order stationary point of $\Phi_{2L}(\cdot)$, then $\bu$ is an $(\eps,\sqrt{\widetilde{\rho}\eps})$-second-order stationary point of $\Phi(\cdot)$.
\end{lemma}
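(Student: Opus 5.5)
The proof splits into three parts: the gradient error, the Hessian error, and the transfer of stationarity. For each, we first convert the residual bound into a distance bound. By \eqref{eq:residual-to-distance}, any $\bw$ with $\norm{\nabla f_{\bz}(\bw)}\le\widetilde\eps$ satisfies
\[
\norm{\bw-\bw^*(\bz)}\le\widetilde\eps/\mu .
\]

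\textbf{Gradient error.} By \eqref{eq:envelope-basic}, $\widehat{\bg}-\nabla\Phi_{2L}(\bz)=2L(\bu^*(\bz)-\bu)$. Its norm is at most
\[
2L\norm{\bw-\bw^*(\bz)}\le 2L\widetilde\eps/\mu\le \eps/128,
\]
using the first term of \eqref{eq:parameters}.

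\textbf{Hessian error.} Symmetry of $\widehat{\bH}$ follows because $\bM(\bw)$ is symmetric and invertible: $\bM(\bw)\succeq L\bI_{d_x}$ by Proposition~\ref{prop:envelope-basic}. For the error, we reuse the inverse-difference bound \eqref{eq:schur-inverse-lipschitz} from the proof of Lemma~\ref{lem:envelope}, taking $\bv=\bw^*(\bz)$. This gives
\[
\norm{\widehat{\bH}-\nabla^2\Phi_{2L}(\bz)}=4L^2\norm{\bM(\bw)^{-1}-\bM(\bw^*(\bz))^{-1}}\le\rho(4+\kappa)\,\widetilde\eps/\mu\le\sqrt{\widetilde\rho\eps}/128,
\]
by the second term of \eqref{eq:parameters}. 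That bound relies on \eqref{eq:T-bound}, i.e.\ $\norm{(\nabla^2 f_{\bz})^{-1}\bP}\le\sqrt{4+\kappa}$ at arbitrary points, which holds uniformly in $\bw$ as used there.

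\textbf{Transfer of stationarity.} By Proposition~\ref{prop:envelope-basic}(ii), $\bu^*(\bz)$ is an $(\eps/2,3\sqrt{\widetilde\rho\eps}/4)$-SOSP of $\Phi$. We then pass from $\bu^*(\bz)$ to $\bu$, which is within $\widetilde\eps/\mu$ of it.
\begin{itemize}
\item For the gradient, $\nabla\Phi$ is $L(1+\kappa)$-Lipschitz (standard, or via \cite{lin2020gda}). Hence
\[
\norm{\nabla\Phi(\bu)}\le \eps/2+L(1+\kappa)\widetilde\eps/\mu .
\]
The first term of \eqref{eq:parameters} makes the extra piece at most $\kappa(1+\kappa)\eps/256$. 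This is not obviously below $\eps/2$, so it is a point to check. One option is to instead use $\nabla\Phi(\bu^*)=2L(\bz-\bu^*)$, but that does not remove the $\kappa$ factor. Possibly the intended argument simply uses $\nabla\Phi$ with Lipschitz constant $\cO(\kappa L)$ and the constants must be rechecked. I would first try to bound $\norm{\nabla\Phi(\bu)-\nabla\Phi(\bu^*)}$ through $\nabla_x f$ evaluated at $\by^*(\bu)$, keeping track of the constants.
\item For the Hessian, $\nabla^2\Phi$ is $\cO(\rho(1+\kappa)^3)$-Lipschitz by \cite{luo2022,chen2023cubic}. Hence
\[
\lambda_{\min}(\nabla^2\Phi(\bu))\ge -\tfrac34\sqrt{\widetilde\rho\eps}-\rho(1+\kappa)^3\widetilde\eps/\mu\ge-\sqrt{\widetilde\rho\eps},
\]
by the fourth term of \eqref{eq:parameters}, giving exactly the slack $\tfrac14\sqrt{\widetilde\rho\eps}$. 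The third term of \eqref{eq:parameters} is needed only so that $\widetilde\eps$ is compatible with \eqref{eq:tracking-conditions}.
\end{itemize}

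The main obstacle is the first-order part of the transfer: matching the $\kappa$-dependent Lipschitz constant of $\nabla\Phi$ against the $\mu\eps/(256L)$ choice. I expect it to require either a sharper estimate or a slight loosening of the constant.
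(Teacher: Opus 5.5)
Your gradient-error, Hessian-error, symmetry and curvature-transfer steps are the same as in the paper's proof. The genuine gap is the first-order part of the stationarity transfer, which you leave unresolved, and the route you take there does fail. Transporting $\norm{\nabla\Phi(\bu^*(\bz))}\le\eps/2$ to $\bu$ through the $L(1+\kappa)$-Lipschitz continuity of $\nabla\Phi$ adds $L(1+\kappa)\widetilde\eps/\mu\le(1+\kappa)\eps/256$. (Your $\kappa(1+\kappa)\eps/256$ has a spurious extra factor $\kappa$.) This exceeds $\eps/2$ once $\kappa>127$.

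Loosening constants is not an option, since the lemma fixes them. Your fallback of comparing through $\by^*(\bu)$ reproduces the same factor if $\norm{\by^*(\bu)-\by^*(\bz)}$ is controlled by the $\kappa$-Lipschitz continuity of the best response. More generally, no argument that uses only $\norm{\bu-\bu^*(\bz)}\le\widetilde\eps/\mu$ can avoid the factor, because $\nabla\Phi$ can genuinely be $\Theta(\kappa L)$-Lipschitz.

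The missing idea is to use the $\by$-block of the residual at $\bw$ itself. Since $\nabla_y f_{\bz}(\bw)=\nabla_y f(\bu,\by)$, strong concavity gives $\norm{\by-\bar\by}\le\widetilde\eps/\mu$ for $\bar\by:=\argmax_{\by'}f(\bu,\by')$. Danskin gives $\nabla\Phi(\bu)=\nabla_u f(\bu,\bar\by)$. The paper then writes $\nabla\Phi(\bu)-2L(\bz-\bu)=\nabla_u f(\bu,\bar\by)-\nabla_u f(\bu,\by)+\nabla_u f_{\bz}(\bw)$, which yields
\[
\norm{\nabla\Phi(\bu)-\nabla\Phi_{2L}(\bz)}\le L\norm{\by-\bar\by}+\norm{\nabla_u f_{\bz}(\bw)}+2L\norm{\bu-\bu^*(\bz)}\le\frac{4L\widetilde\eps}{\mu}\le\frac{\eps}{64}.
\]
Hence $\norm{\nabla\Phi(\bu)}\le\eps/2+\eps/64<\eps$.

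Your hinted comparison through $\by^*(\bu)$ also works once you use this fact:
\[
\norm{\nabla\Phi(\bu)-\nabla\Phi(\bu^*(\bz))}=\norm{\nabla_u f(\bu,\bar\by)-\nabla_u f(\bw^*(\bz))}\le L\bigl(\norm{\bar\by-\by}+\norm{\bw-\bw^*(\bz)}\bigr)\le\frac{2L\widetilde\eps}{\mu}.
\]
The residual condition pins $\by$ within $\widetilde\eps/\mu$ of both best responses, at $\bu$ and at $\bu^*(\bz)$. So the best response moves only $\cO(\widetilde\eps/\mu)$, not $\cO(\kappa\widetilde\eps/\mu)$, and the first term of \eqref{eq:parameters} suffices.
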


We obtain the required approximate saddle points using the methods in Section~\ref{sec:saddle-solvers}. 
PCE-DM (Algorithm~\ref{alg:pce-dm}) computes $\bw_0$ from an arbitrary initial point $\bw_{\rm init}$, with $\norm{\nabla f_{\bz_0}(\bw_0)}\le\widetilde\eps$. 
At each subsequent outer iteration, NewtonTrack (Algorithm~\ref{alg:newton-track}) uses the approximate saddle point $\bw_{k-1}$ of $f_{\bz_{k-1}}(\cdot)$ to compute $\bw_k$ satisfying $\norm{\nabla f_{\bz_k}(\bw_k)}\le\widetilde\eps$. 
We present the resulting Envelope Cubic Regularized Newton (ECRN) method in Algorithm~\ref{alg:envelope-newton}.

\begin{algorithm}[htbp]
\caption{Envelope Cubic Regularized Newton (ECRN)}\label{alg:envelope-newton}
\begin{algorithmic}[1]
\State \textbf{Input:} initial points $\bz_0$ and $\bw_{\rm init}$, accuracy $\eps>0$.
\State \textbf{Parameters:} $\widetilde\eps$ from~\eqref{eq:parameters} and $R=\frac{\mu}{48\rho\sqrt\kappa}$.
\State $\bw_0=(\bu_0,\by_0)\gets\textnormal{\textsc{PCE-DM}}(\bw_{\rm init};\bz_0,\widetilde\eps)$.
\For{$k=0,1,2,\ldots$}
\State $\widehat{\bg}_k\gets2L(\bz_k-\bu_k)$.
\State $\widehat{\bH}_k\gets2L\bI_{d_x}-4L^2\bM(\bw_k)^{-1}$.
\State $\bs_k\in\argmin_{\bs}\{\ip{\widehat{\bg}_k}{\bs}+\frac{1}{2}\bs^\top\widehat{\bH}_k\bs+\frac{\widetilde{\rho}}{3}\norm{\bs}^3\}$.
\State $\bz_{k+1}\gets\bz_k+\bs_k$.
\State $\bw_{k+1}=(\bu_{k+1},\by_{k+1})\gets\textnormal{\textsc{NewtonTrack}}(\bw_k;\bz_k,\bz_{k+1},\widetilde\eps,R)$.
\If{$\norm{\bs_k}\le\sqrt{\frac{\eps}{8\widetilde{\rho}}}$} \State \textbf{return} $\bu_{k+1}$. \EndIf
\EndFor
\end{algorithmic}
\end{algorithm}

\begin{theorem}\label{thm:upper}
Under Assumptions~\ref{ass:smooth}--\ref{ass:gap}, for every $0<\eps\le L^2/\widetilde{\rho}$, {\rm ECRN} returns an $(\eps,\sqrt{\widetilde{\rho}\eps})$-second-order stationary point of $\Phi(\cdot)$. 
Its total number of gradient and Hessian evaluations satisfies
\begin{equation}\label{eq:main-complexity}
N_{\rm grad}+N_{\rm Hess}
=\tO\!\left(
\Delta\sqrt\rho\,\kappa^{\frac{3}{4}}\eps^{-\frac{3}{2}}\right).
\end{equation}
\end{theorem}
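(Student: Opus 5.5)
The proof has three parts: correctness, outer iteration count, and inner oracle cost.

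\textbf{Correctness.} I first verify that at every outer iteration the pair $(\widehat{\bg}_k,\widehat{\bH}_k)$ satisfies the contract~\eqref{eq:model-contract} for $g=\Phi_{2L}$ and $\rho_g=\widetilde{\rho}$. This reduces to maintaining the invariant $\norm{\nabla f_{\bz_k}(\bw_k)}\le\widetilde\eps$ and then invoking Lemma~\ref{lem:oracle}. The invariant holds at $k=0$ by Lemma~\ref{lem:pce-dm}. It is propagated by Lemma~\ref{lem:tracking}, whose hypotheses~\eqref{eq:tracking-conditions} hold because $\widetilde\eps\le\mu^2/(48\rho)$ by~\eqref{eq:parameters} and $R=\mu/(48\rho\sqrt\kappa)\le\mu/(16\rho\sqrt{4+\kappa})$, using $\sqrt{4+\kappa}\le\sqrt{5\kappa}<3\sqrt\kappa$. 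With $\Phi_{2L}(\bz_0)-\inf\Phi_{2L}\le\Delta$ from Proposition~\ref{prop:envelope-basic}(i) and $\widetilde{\rho}$-Lipschitz Hessian from Lemma~\ref{lem:envelope}, Lemma~\ref{lem:cubic} applies. It gives termination after $K+1$ iterations with $K\le 4\cdot 8^{3/2}\Delta\sqrt{\widetilde{\rho}}\eps^{-3/2}=\cO(\Delta\sqrt\rho\,\kappa^{3/4}\eps^{-3/2})$. It also shows that $\bz_{K+1}$ is an $(\eps/2,\tfrac34\sqrt{\widetilde{\rho}\eps})$-SOSP of $\Phi_{2L}$. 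Since $\bw_{K+1}$ satisfies the invariant at $\bz_{K+1}$, the last part of Lemma~\ref{lem:oracle} shows that $\bu_{K+1}$ is an $(\eps,\sqrt{\widetilde{\rho}\eps})$-SOSP of $\Phi$.

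\textbf{Oracle count.} PCE-DM costs $\tO(\sqrt\kappa)$ gradients once, which is absorbed into the final bound. Iteration $k$ calls NewtonTrack with $m_k=\lceil\norm{\bs_k}/R\rceil\le 1+\norm{\bs_k}/R$. By Lemma~\ref{lem:tracking} its cost is $\cO(m_k T_{\rm N}(\widetilde\eps))$, and $T_{\rm N}=\cO(\log\log(\cdot))$ is logarithmic. Summing gives
\[
\sum_{k=0}^{K}m_k\le K+1+\frac1R\sum_{k=0}^{K}\norm{\bs_k}.
\]
For the indices $k\le K-1$, I apply H\"older's inequality to the cube sum in~\eqref{eq:cubic-sum}:
\[
\sum_{k<K}\norm{\bs_k}\le K^{2/3}\Bigl(\sum_{k<K}\norm{\bs_k}^3\Bigr)^{1/3}\le K^{2/3}(4\Delta/\widetilde{\rho})^{1/3}.
\]
The last step satisfies $\norm{\bs_K}\le\sqrt{\eps/(8\widetilde{\rho})}$ by the stopping test. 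Substituting $K\lesssim\Delta\sqrt{\widetilde{\rho}}\eps^{-3/2}$ yields
\[
\sum_{k<K}\norm{\bs_k}\lesssim \Delta\,\eps^{-1}.
\]
Dividing by $R$ gives a contribution of order $\rho\sqrt\kappa\,\Delta/(\mu\eps)$. This is not manifestly below $\sqrt\rho\,\kappa^{3/4}\Delta\eps^{-3/2}$, so the scaling needs checking.

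\textbf{The obstacle: comparing the tracking term with $K$.} This comparison is where I expect the difficulty, and it is where the hypothesis $\eps\le L^2/\widetilde{\rho}$ should enter. The ratio of the tracking term to the target is
\[
\frac{\rho\sqrt\kappa/(\mu\eps)}{\sqrt\rho\,\kappa^{3/4}\eps^{-3/2}}=\frac{\sqrt{\rho\eps}}{\mu\,\kappa^{1/4}}.
\]
Writing $\mu=L/\kappa$, this equals $\sqrt{\rho\eps}\,\kappa^{3/4}/L$. Since $\widetilde{\rho}\ge\rho\kappa^{3/2}$, the hypothesis gives $\rho\eps\,\kappa^{3/2}\le\widetilde{\rho}\eps\le L^2$, so the ratio is at most $1$. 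Hence the tracking term is $\cO(\Delta\sqrt\rho\,\kappa^{3/4}\eps^{-3/2})$. I will carry out this bookkeeping carefully, keeping the explicit constants $(4+\kappa)$ versus $\kappa$ under control and making sure every $\log$ factor, namely $T_{\rm N}(\widetilde\eps)$ and the PCE-DM logarithm involving $D_{\rm init}$, is only logarithmic in the problem parameters. Combining the three contributions gives~\eqref{eq:main-complexity}.
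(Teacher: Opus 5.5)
Your proposal is correct and follows essentially the same route as the paper. Both arguments maintain the invariant $\norm{\nabla f_{\bz_k}(\bw_k)}\le\widetilde\eps$ via PCE-DM and NewtonTrack, combine Lemma~\ref{lem:cubic} with Lemma~\ref{lem:oracle} to get stationarity, and use $\eps\le L^2/\widetilde\rho$ to bound the tracking term $\Delta/(R\eps)$ by $\cO(\Delta\sqrt{\widetilde\rho}\,\eps^{-3/2})$. The only difference is minor: you bound $\sum_{k<K}\norm{\bs_k}$ by H\"older's inequality together with the bound on $K$, whereas the paper uses the pointwise bound $\norm{\bs_k}\le(8\widetilde\rho/\eps)\norm{\bs_k}^3$ that follows from the failed stopping test; both give exactly $32\Delta/\eps$, and your ratio computation also shows the final segment count $m_K$ is $\cO(1)$.
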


\begin{proof}
Let $K$ be the index at which ECRN satisfies its stopping condition, $r_k=\norm{\bs_k}$, and $m_k=\lceil r_k/R\rceil$. Lemma~\ref{lem:pce-dm}, applied to the initialization with $\bar\eps=\widetilde\eps$, and the stopping test in Algorithm~\ref{alg:pce-dm} give
\[
\norm{\nabla f_{\bz_0}(\bw_0)}\le\widetilde\eps.
\]
The parameter choices satisfy
\[
\widetilde\eps\stackrel{\eqref{eq:parameters}}{\le}\frac{\mu^2}{48\rho}
\quad\text{and}\quad
R=\frac{\mu}{48\rho\sqrt\kappa}
\le\frac{\mu}{16\rho\sqrt{4+\kappa}}.
\]
Applying Lemma~\ref{lem:tracking} with $\bar\eps=\widetilde\eps$ and $(\bar{\bw},\bar{\bz},\bz)=(\bw_k,\bz_k,\bz_{k+1})$ inductively yields
\[
\norm{\nabla f_{\bz_k}(\bw_k)}\le\widetilde\eps~~~\text{for}~~~ k=0,\ldots,K+1.
\]
For each $k=0,\ldots,K$, Lemma~\ref{lem:oracle} gives
\[
\norm{\widehat{\bg}_k-\nabla\Phi_{2L}(\bz_k)}\stackrel{\eqref{eq:model-errors}}{\le}\frac{\eps}{128}
\quad\text{and}\quad
\norm{\widehat{\bH}_k-\nabla^2\Phi_{2L}(\bz_k)}\stackrel{\eqref{eq:model-errors}}{\le}\frac{\sqrt{\widetilde{\rho}\eps}}{128}.
\]
By Proposition~\ref{prop:envelope-basic}, we also have
\[
\Phi_{2L}(\bz_0)-\inf_{\bz}\Phi_{2L}(\bz)
=\Phi_{2L}(\bz_0)-\inf_{\bz}\Phi(\bz)
\stackrel{\eqref{eq:envelope}}{\le}\Phi(\bz_0)-\inf_{\bz}\Phi(\bz)\le\Delta.
\]
Applying Lemma~\ref{lem:cubic} with $g(\cdot)=\Phi_{2L}(\cdot)$ and $\rho_g=\widetilde{\rho}$ yields
\begin{equation}\label{eq:nonterminal-step-bounds}
K\stackrel{\eqref{eq:cubic-sum}}{\le}4\cdot8^{\frac{3}{2}}\Delta\sqrt{\widetilde{\rho}}\,\eps^{-\frac{3}{2}}
\quad\text{and}\quad
\sum_{k=0}^{K-1}r_k^3\stackrel{\eqref{eq:cubic-sum}}{\le}\frac{4\Delta}{\widetilde{\rho}}.
\end{equation}
At iteration $K$, Lemma~\ref{lem:cubic} gives
\[
\norm{\nabla\Phi_{2L}(\bz_{K+1})}\stackrel{\eqref{eq:cubic-stop}}{<}\frac{\eps}{2}
\quad\text{and}\quad
\nabla^2\Phi_{2L}(\bz_{K+1})
\stackrel{\eqref{eq:cubic-stop}}{\succeq}-\frac{3}{4}\sqrt{\widetilde{\rho}\eps}\,\bI_{d_x}.
\]
The stationarity conclusion of Lemma~\ref{lem:oracle}, applied with $\bz=\bz_{K+1}$ and $\bw=\bw_{K+1}$, yields
\[
\norm{\nabla\Phi(\bu_{K+1})}\le\eps
\quad\text{and}\quad
\nabla^2\Phi(\bu_{K+1})\succeq-\sqrt{\widetilde{\rho}\eps}\,\bI_{d_x}.
\]
Then, we bound the total oracle complexity of ECRN.
The definitions of $R$ and $\widetilde{\rho}$, together with $\eps\le L^2/\widetilde{\rho}$, give
\begin{align}
\label{eq:delta-R-bound-1}
&\frac{\sqrt{\eps}}{R\sqrt{\widetilde{\rho}}}
=\frac{\sqrt{\widetilde{\rho}\eps}}{\widetilde{\rho}R}
\le\frac{L}{\widetilde{\rho}R}
=48\left(\frac{\kappa}{4+\kappa}\right)^{\frac{3}{2}}\le48,
\end{align}
which gives
\begin{align}
\label{eq:delta-R-bound-2}
\frac{\Delta}{R\eps}
=\frac{\sqrt{\eps}}{R\sqrt{\widetilde{\rho}}}
\Delta\sqrt{\widetilde{\rho}}\,\eps^{-\frac{3}{2}}
\stackrel{\eqref{eq:delta-R-bound-1}}{\le}48\Delta\sqrt{\widetilde{\rho}}\,\eps^{-\frac{3}{2}}.
\end{align}
For $k=0,\ldots,K-1$, the stopping condition is not satisfied, so $r_k>\sqrt{\eps/(8\widetilde{\rho})}$. Thus, it holds that
\begin{equation}\label{eq:r_k_bound}
r_k=\frac{r_k^3}{r_k^2}\le\frac{8\widetilde{\rho}}{\eps}r_k^3.
\end{equation}
The number of segments in NewtonTrack method satisfies that
\begin{equation}\label{eq:m_k_bound}
m_k=\left\lceil \frac{r_k}{R}\right\rceil\le1+\frac{r_k}{R}.
\end{equation}
Summing up \eqref{eq:m_k_bound} gives
\begin{equation}\label{eq:nonterminal-segments}
\begin{aligned}
\sum_{k=0}^{K-1}m_k
&\stackrel{\eqref{eq:m_k_bound}}{\le}K+\frac{1}{R}\sum_{k=0}^{K-1}r_k\stackrel{\eqref{eq:r_k_bound}}{\le}
K+\frac{8\widetilde{\rho}}{R\eps}\sum_{k=0}^{K-1}r_k^3\\
&\stackrel{\eqref{eq:nonterminal-step-bounds}}{\le}
K+\frac{32\Delta}{R\eps}\stackrel{\eqref{eq:nonterminal-step-bounds},\eqref{eq:delta-R-bound-2}}{=}
\cO(\Delta\sqrt{\widetilde{\rho}}\,\eps^{-\frac{3}{2}}).
\end{aligned}
\end{equation}
At iteration $K$, the stopping test gives $r_K\le\sqrt{\eps/(8\widetilde{\rho})}$. Thus, we have
\[
m_K\stackrel{\eqref{eq:m_k_bound}}{\le}1+\frac{r_K}{R}
\le1+\frac{\sqrt{\eps}}{\sqrt8R\sqrt{\widetilde{\rho}}}
\stackrel{\eqref{eq:delta-R-bound-1}}{\le}1+\frac{48}{\sqrt8}.
\]
Consequently, the total number of inner solves performed by NewtonTrack is
\begin{equation}\label{eq:total-segments}
N_{\rm seg}:=\sum_{k=0}^{K}m_k
=\cO\bigl(\Delta\sqrt{\widetilde{\rho}}\,\eps^{-\frac{3}{2}}\bigr).
\end{equation}
The initialization cost from Lemma~\ref{lem:pce-dm} and the per-call evaluation counts from Lemma~\ref{lem:tracking} give the final oracle complexity
\[
\begin{aligned}
&N_{\rm grad}+N_{\rm Hess}\\
&\quad\stackrel{\eqref{eq:local-newton-horizon}}{\le}
\underbrace{\tO(\sqrt\kappa)}_{\text{PCE-DM}}+\underbrace{K+1}_{\text{outer level}}
+\underbrace{\left(2\left\lceil\log_2\log_2\frac{6L\mu}{\rho\widetilde\eps}\right\rceil+1\right)N_{\rm seg}}_{\text{inner level}}\\
&\quad\stackrel{\eqref{eq:cubic-sum}}{\le}\tO(\sqrt\kappa)+ \mathcal{O}(\Delta \sqrt{\tilde{\rho}}\epsilon^{-\frac{3}{2}})
+\cO\!\left(N_{\rm seg}\log\log\frac{L\mu}{\rho\widetilde\eps}\right)\\
&\quad \stackrel{\eqref{eq:total-segments}}{\le} \tilde{O}(\Delta\sqrt\rho\,\kappa^{\frac{3}{4}}\eps^{-\frac{3}{2}} ).
\end{aligned}
\]

\end{proof}

\section{Lower Complexity Bound} \label{sec:lower}
We next show that the dependence in Theorem~\ref{thm:upper} cannot be improved for deterministic second-order algorithms formally defined as follows.
\begin{definition} \label{dfn:so-alg-class}
Let $\sO(\bw) = (\nabla f(\bw), \nabla^2 f(\bw))$ be the second-order oracle.
A deterministic second-order algorithm \texttt{A} consists of a sequence of measurable mappings $\{ \texttt{A}^{(t)} \}_{t \in \sN}$ such that $\texttt{A}^{(t)}$ takes in the first $t-1$ oracle responses  to produce the $t$-th query,  \textit{i.e.}, \texttt{A} generates a sequence $\{\bw^{(t)}\}_{t \in \sN}$ with
\begin{align*}
    \bw^{(t)} = \texttt{A}^{(t)} ( \sO(\bw^{(0)}), \sO(\bw^{(1)}), \cdots, \sO(\bw^{(t-1)}) ).
\end{align*}
\end{definition}
To prove lower bounds for the above algorithm class, we construct an objective $f(\bx,\by)$ by embedding a Fenchel conjugate, such that the primal objective $\Phi(\bx) = \max_{\by \in \R^{d_y}} f(\bx,\by)$ is equivalent to the smooth zero-chain of \citet{carmon2020lower} rescaled by a factor $\kappa^{3/2}$. Then we can reuse the analysis in \citet{carmon2020lower} and conclude a lower bound of $\Omega( \kappa^{3/4} \eps^{-3/2})$. The rest of this section proves this lower bound following this idea. 

\subsection{Recalling the Nonconvex Zero-Chain}
The main technique we use to prove lower bounds is to construct second-order zero-chain functions \citep{nesterov2018lectures,carmon2020lower}, which are differentiable functions satisfying: 
for every \(t\in[T]\) and every $\bx \in \R^T$, we have
\begin{align} \label{eq:dfn-zero-chain}
    \supp(\bx) \subseteq \{1,\cdots,t-1 \} ~~ \Longrightarrow ~~ 
    \begin{cases}
    \supp(\nabla f(\bx)) \subseteq \{1,\cdots,t \}; \\
    \supp(\nabla^2 f(\bx)) \subseteq \{1, \cdots, t\}^2.
    \end{cases}
\end{align}
Functions with the above properties restrict the information flow by forcing any second-order algorithm to discover and activate coordinates sequentially. Based on this idea, \citet{carmon2020lower} constructed the smooth zero-chain
\begin{equation}\label{eq:base-chain}
\bar h_T(\bx)=-\Psi(1)\Gamma(x_1)
+\sum_{i=2}^T\left[\Psi(-x_{i-1})\Gamma(-x_i)-\Psi(x_{i-1})\Gamma(x_i)\right],
\end{equation}
where
\[
\Psi(t)=\begin{cases}0&t\le\frac{1}{2}\\ \exp(1-(2t-1)^{-2})&t>\frac{1}{2}\end{cases}
~~~\text{and}~~~
\Gamma(t)=\sqrt e\int_{-\infty}^t\exp(-\frac{s^2}{2})\,ds.
\]
The main properties of this hard instance are summarized in the following lemma. Briefly speaking, this function is a second-order zero-chain, and any point whose last coordinate is zero has a large gradient norm. Consequently, to find an $\eps$-stationary point, an algorithm must make at least $T$ queries to reach the final coordinate that triggers a sufficiently small gradient norm.

\begin{samepage}
\begin{lemma}[{\citet[Section 4.1]{carmon2020lower} and \citet[Lemma 2]{arjevani2023lower}}] \label{lem:carmon}
There exists numerical constants $G,\ell_1,\ell_2>0$ such that:
\begin{enumerate}[label=(\roman*),leftmargin=2em,itemsep=5pt,parsep=5pt,topsep=6pt]
\item $\bar h_T(\bzero)-\inf_{\bx}\bar h_T(\bx)\le12T$.
\item $\norm{\nabla\bar h_T(\bx)}_\infty\le G$ and $\norm{\nabla^2\bar h_T(\bx)}\le\ell_1$.
\item $\nabla^2\bar h_T$ is $\ell_2$-Lipschitz.
\item If $|x_j|<1/2$ for all $j\ge i$, then in a neighborhood of $\bx$, the function depends only on coordinates $1,\ldots,i$.
\item If $|x_T|<1$, then $\norm{\nabla\bar h_T(\bx)}>1$.
\end{enumerate}    
\end{lemma}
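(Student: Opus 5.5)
This lemma is cited from \citet{carmon2020lower} and \citet{arjevani2023lower}, so the plan is to re-verify each item directly from the structure of \eqref{eq:base-chain}, using the elementary properties of $\Psi$ and $\Gamma$. I would first record the scalar facts, which are the workhorse of every item:
\begin{itemize}
\item $0\le\Psi<e$ and $0<\Gamma<\sqrt{2\pi e}$;
\item $\Psi$ and $\Gamma$ are smooth, and $\Psi$ vanishes identically on $(-\infty,1/2]$;
\item the first three derivatives of $\Psi$ and $\Gamma$ are uniformly bounded;
\item $\Psi(t)\Gamma'(t)$-type products satisfy the lower bounds from \citet[Lemma~1]{carmon2020lower}, namely $\Psi(t)>1$ for $t\ge1$ and $\Gamma'(t)\ge1$ for $|t|<1$.
\end{itemize}

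Item (i) follows because $\bar h_T(\bzero)=-\Psi(1)\Gamma(0)\le 0$ up to sign conventions. Each of the $T$ summands is bounded below by $-\Psi\Gamma\ge -e\sqrt{2\pi e}\le 12$ in absolute value, and summing gives the $12T$ gap.

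For items (ii) and (iii), note that each coordinate $x_i$ appears only in the terms indexed $i$ and $i+1$. Hence $\partial_i\bar h_T$ is a sum of at most four products of bounded scalar functions, which gives the $\ell_\infty$ bound $G$. The Hessian is tridiagonal with uniformly bounded entries, so its operator norm is at most a constant $\ell_1$; for example, bound it by the maximal absolute row sum. For the Lipschitz property of the Hessian, I would differentiate once more. The third-derivative tensor is supported on index triples within distance one, with bounded entries. Bounding $\sup_{\|\bv\|=1}\|\nabla^3\bar h_T[\bv]\|$ by a row-sum argument then gives a dimension-free $\ell_2$. The main obstacle is this dimension-free bound, and the banded structure is exactly what delivers it.

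Item (iv) uses the vanishing of $\Psi$ together with $\Gamma'(t)$ at the relevant region. If $|x_j|<1/2$ for all $j\ge i$, then $\Psi(\pm x_{j})=0$ on a neighborhood for $j\ge i$. So every summand with index $j+1>i$ is identically zero there, and the remaining terms involve only $x_1,\ldots,x_i$.

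For item (v), take $|x_T|<1$ and let $j\le T$ be the smallest index with $|x_j|<1$ (it exists). If $j=1$, then $|\partial_1\bar h_T|\ge\Psi(1)\Gamma'(x_1)-(\text{nonnegative-signed term})$. One follows Carmon et al.'s sign analysis: the derivative in $x_j$ collects $-\Psi(-x_{j-1})\Gamma'(-x_j)-\Psi(x_{j-1})\Gamma'(x_j)$, plus terms from index $j+1$ whose $\Psi'$ factors vanish or share the same sign since $|x_j|<1$. Because $|x_{j-1}|\ge1$, one of $\Psi(\pm x_{j-1})$ is at least $\Psi(1)=1$ and $\Gamma'\ge \sqrt e e^{-1/2}=1$, which gives $|\partial_j\bar h_T|>1$. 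This case analysis is the most delicate step, and I would reproduce it from \citet[Lemma~3]{carmon2020lower}.
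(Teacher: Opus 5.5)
Your proposal is correct and follows essentially the \citet{carmon2020lower} argument that the paper relies on by citation: the banded structure gives (ii)--(iii), the vanishing of $\Psi$ on $(-\infty,1/2]$ gives (iv), and the smallest-index sign argument gives (v). Two small repairs: since $\Psi(1)=1$ exactly, the strict inequality in (v) must come from $\Gamma'(x_j)=\sqrt e\,e^{-x_j^2/2}>1$ for $|x_j|<1$; and the extra terms $-\Psi'(\pm x_j)\Gamma(\pm x_{j+1})$ are nonpositive because $\Psi'\ge0$ and $\Gamma>0$ everywhere (not because $|x_j|<1$), so they add to rather than subtract from $|\partial_j\bar h_T|$, including when $j=1$.
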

\end{samepage}
For any constant $p \in \sN_+$, this hard instance can give a lower bound of $\Omega( \eps^{-p/(p+1)})$ for $p$th-order deterministic methods for finding $\eps$-stationary points of smooth nonconvex functions \citep{carmon2020lower}. In particular, for $p=2$, it leads to a tight lower bound of $\Omega(\eps^{-3/2})$ that matches the theoretical guarantee of the celebrated cubic regularized Newton (CRN) method \citep{nesterov2006}. In the following, we build upon this nonconvex function and construct a nonconvex--strongly-concave (NC--SC) function to show tight lower bounds for second-order minimax optimization.

% \begin{proof}
% The coordinatewise bound in (ii) also follows directly from \eqref{eq:base-chain}: each coordinate appears in at most two adjacent summands, and the functions and derivatives in those summands are uniformly bounded. Property (iv) follows because $\Psi$ is identically zero on a neighborhood of each tail argument. These observations make the required coordinatewise and local statements explicit.    
% \end{proof}
\subsection{Constructing a Hard Instance by Fenchel Conjugation}

To obtain lower complexity bounds for NC--SC minimax optimization from NC hard instances, a naive idea is to generalize the same technique for constructing first-order lower bounds in \citet{zhang2021complexity} to second order.
The construction of \citet{zhang2021complexity} leverages the identity 
\[
\max_{\by \in \R^{d_y}} \left\{\Upsilon(\bx,\by) \triangleq \lambda \langle \bB \bx, \by \rangle - \frac{1}{2} \| \by \|^2 \right\} = \frac{\lambda^2}{2} \| \bB \bx \|^2,
\]
and construct the coupling term in $\Upsilon(\bx,\by)$ according to the left-hand side above. 
While this simple coupling works for first-order lower bounds, it relies heavily on the fact that the first-order nonconvex hard instance can be decomposed into a quadratic component and separable nonconvex terms \citep{carmon2021lower}. When moving to second-order lower bounds, the hard instance $\bar h_T(\bx)$ in Eq. (\ref{eq:base-chain}) does not have such a simple decomposition and therefore requires new techniques. To overcome this limitation, we introduce a more principled approach based on the Fenchel conjugate. Formally, we construct a base hard instance of NC--SC minimax problems as
% \textit{conjugate embedding}. Instead of forcing an explicit bilinear coupling, we directly construct the minimax objective $f(\vx, \vy)$ by embedding the Fenchel conjugate of a composite primal objective. This guarantees that the implicit primal function $\Phi(\vx) = \max_{\vy} f(\vx, \vy)$ exactly reduces to the target non-convex smooth zero-chain $\bar{h}_T$ up to appropriate scaling, while naturally preserving the required $L$-smoothness and $\mu$-strong concavity conditions for $f$.
\begin{equation}\label{eq:hard-f}
f(\bx,\by)=-\frac{L}{4}\norm{\bx}^2+\beta\ip{\bx}{\by}-Q^*(\beta \by).
\end{equation}
Here $Q^*(\bs) \triangleq \sup_{\bx}\{\ip{\bs}{\bx}-Q(\bx)\}$ is the Fenchel conjugate of the function $Q(\cdot)$ which is defined by 
\begin{equation}\label{eq:lower-construction}
Q(\bx)= \alpha \bar h_T(\frac{\bx}{\sigma})+\frac{L}{4} \norm{\bx}^2,
\end{equation}
where $\bar h_T(\bx)$ is the second-order zero-chain defined in Eq. (\ref{eq:base-chain}).
Finally, for $c_\eps  =\alpha\ell_1/\sigma^2 \le L/8$, we choose the parameter as $\beta=\sqrt{\mu(L/2+c_\eps)}$. By the Fenchel--Moreau biconjugation theorem, we have $Q^{**}(\bx) = Q(\bx)$. Therefore, the primal function in our construction is exactly $\alpha \bar h_T(\bx/\sigma)$.

\begin{observation} \label{observation-phi}
We have $\Phi(\bx) \triangleq \max_{\by \in \R^T} f(\bx,\by) = \alpha \bar h_T(\bx/\sigma) $.
\end{observation}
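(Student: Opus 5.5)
We compute the inner maximization directly through the definition of the Fenchel conjugate. The plan has three steps: check that $Q(\cdot)$ is closed proper convex, so that biconjugation applies; rewrite the maximization over $\by$ as a conjugate evaluation after changing variables $\bs=\beta\by$; and cancel the quadratic terms.

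\textbf{Convexity of $Q$.} By Lemma~\ref{lem:carmon}(ii), $\norm{\nabla^2\bar h_T}\le\ell_1$. Hence
\[
\nabla^2\bigl(\alpha\bar h_T(\cdot/\sigma)\bigr)\succeq-\frac{\alpha\ell_1}{\sigma^2}\bI=-c_\eps\bI .
\]
Since $c_\eps\le L/8$, we get
\[
\nabla^2 Q\succeq\Bigl(\frac{L}{2}-c_\eps\Bigr)\bI\succeq\frac{3L}{8}\bI,
\]
so $Q(\cdot)$ is smooth and strongly convex. In particular, $Q(\cdot)$ is proper, closed and convex, its conjugate $Q^*(\cdot)$ is finite everywhere, and $Q^{**}=Q$ by the Fenchel--Moreau theorem.

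\textbf{Reduction to the conjugate.} Fix $\bx$. Because $\beta>0$, the map $\by\mapsto\bs=\beta\by$ is a bijection of $\R^T$. Therefore
\[
\max_{\by}\bigl\{\beta\ip{\bx}{\by}-Q^*(\beta\by)\bigr\}=\sup_{\bs}\bigl\{\ip{\bx}{\bs}-Q^*(\bs)\bigr\}=Q^{**}(\bx)=Q(\bx).
\]
The supremum is attained at $\bs=\nabla Q(\bx)$, that is, at $\by=\nabla Q(\bx)/\beta$. This is the standard Fenchel--Young equality case, so writing $\max$ rather than $\sup$ is justified.

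\textbf{Cancellation.} Adding the $\by$-independent term $-\frac{L}{4}\norm{\bx}^2$ gives
\[
\Phi(\bx)=-\frac{L}{4}\norm{\bx}^2+Q(\bx)=\alpha\bar h_T(\bx/\sigma),
\]
since the $\frac{L}{4}\norm{\bx}^2$ term in~\eqref{eq:lower-construction} cancels exactly.

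\textbf{Main obstacle.} The only delicate step is verifying the hypotheses of biconjugation and attainment, namely convexity and lower semicontinuity of $Q(\cdot)$. This is exactly where the condition $c_\eps\le L/8$ is used, and it is handled by the Hessian bound above. Strong concavity of $f(\bx,\cdot)$ is not needed for this observation; it matters only later, when checking the assumptions on $f$.
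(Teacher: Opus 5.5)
Your proposal is correct and follows the paper's own argument: absorb $\beta$ into the conjugate variable, apply Fenchel--Moreau biconjugation $Q^{**}=Q$, and cancel the $\frac{L}{4}\norm{\bx}^2$ terms. The paper states this in one line. Your version additionally makes explicit that $Q(\cdot)$ is strongly convex because $c_\eps\le L/8$, and that the maximum is attained at $\by=\nabla Q(\bx)/\beta$; the paper leaves both implicit, with convexity of $Q(\cdot)$ appearing only later in the proof of Lemma~\ref{lem:pro-psi}.
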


Let $\rho_h \triangleq \alpha\ell_2/\sigma^3$ be the Hessian Lipschitz constant of the function $\Phi$ indicated by Lemma~\ref{lem:carmon}. It in turn provides the upper bound of Hessian Lipschitz constant of the conjugate function $\psi(\by) \triangleq Q^*(\beta \by)$ in our hard instance $f(\bx,\by)$.

\begin{lemma} \label{lem:pro-psi}
If $c_\eps\le L/8$, the function $\psi(\by) = Q^*(\beta \by)$ satisfies
\begin{equation}\label{eq:psi-bounds-main}
\mu \bI_T \preceq\nabla^2\psi(\by)\preceq\frac{5}{3}\mu \bI_T~~~\text{and}~~~
\Lip(\nabla^2\psi(\by))\le\frac{\beta^3\rho_h}{(\frac{L}{2}-c_\eps)^3},
\end{equation}
where $\Lip(\,\cdot\,)$ denotes the Lipschitz constant of a matrix.
\end{lemma}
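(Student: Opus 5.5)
The plan is to use Fenchel duality between the Hessians of $Q$ and $Q^*$. Write $h(\bx):=\alpha\bar h_T(\bx/\sigma)$, so $Q(\bx)=h(\bx)+\frac{L}{4}\norm{\bx}^2$.

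\emph{Step 1: strong convexity and smoothness of $Q$.} By Lemma~\ref{lem:carmon}(ii), $\norm{\nabla^2 h(\bx)}\le\alpha\ell_1/\sigma^2=c_\eps$. Hence
\[
\bigl(\tfrac{L}{2}-c_\eps\bigr)\bI_T\preceq\nabla^2Q(\bx)\preceq\bigl(\tfrac{L}{2}+c_\eps\bigr)\bI_T .
\]
Since $c_\eps\le L/8<L/2$, the function $Q$ is strongly convex and $C^2$. Consequently $Q^*$ is $C^2$ on all of $\R^T$, and $\nabla Q^*=(\nabla Q)^{-1}$ is a global diffeomorphism. By the inverse function theorem,
\[
\nabla^2Q^*(\bs)=\bigl(\nabla^2Q(\nabla Q^*(\bs))\bigr)^{-1}.
\]
Its eigenvalues therefore lie in $[(L/2+c_\eps)^{-1},(L/2-c_\eps)^{-1}]$. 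Since $\nabla^2\psi(\by)=\beta^2\nabla^2Q^*(\beta\by)$ and $\beta^2=\mu(L/2+c_\eps)$, we get
\[
\mu\bI_T\preceq\nabla^2\psi(\by)\preceq\mu\,\frac{L/2+c_\eps}{L/2-c_\eps}\,\bI_T .
\]
For $c_\eps\le L/8$ the ratio is at most $(5L/8)/(3L/8)=5/3$. This gives the first claim.

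\emph{Step 2: Hessian Lipschitz bound.} For $\bs,\bs'$ set $\bx=\nabla Q^*(\bs)$ and $\bx'=\nabla Q^*(\bs')$. The inverse-difference identity gives
\[
\nabla^2Q^*(\bs)-\nabla^2Q^*(\bs')=\nabla^2Q(\bx)^{-1}\bigl(\nabla^2Q(\bx')-\nabla^2Q(\bx)\bigr)\nabla^2Q(\bx')^{-1}.
\]
We bound the three factors as follows.
\begin{itemize}
\item Each inverse has norm at most $(L/2-c_\eps)^{-1}$ by Step 1.
\item The middle factor equals $\nabla^2h(\bx')-\nabla^2h(\bx)$, because the quadratic part cancels. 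By Lemma~\ref{lem:carmon}(iii) and the chain rule, $\nabla^2 h$ is $\rho_h=\alpha\ell_2/\sigma^3$-Lipschitz, so this factor is at most $\rho_h\norm{\bx-\bx'}$.
\item $\nabla Q^*$ is $(L/2-c_\eps)^{-1}$-Lipschitz, so $\norm{\bx-\bx'}\le(L/2-c_\eps)^{-1}\norm{\bs-\bs'}$.
\end{itemize}
Combining these, $\nabla^2Q^*$ is $\rho_h(L/2-c_\eps)^{-3}$-Lipschitz. Finally, for $\nabla^2\psi(\by)=\beta^2\nabla^2Q^*(\beta\by)$, the prefactor $\beta^2$ and the inner scaling $\beta$ together contribute $\beta^3$. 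This yields $\Lip(\nabla^2\psi)\le\beta^3\rho_h/(L/2-c_\eps)^3$.

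\emph{Main obstacle.} The only delicate point is justifying that $Q^*$ is twice differentiable everywhere with the inverse-Hessian formula. This requires $\nabla Q$ to be a bijection of $\R^T$. That follows from global strong convexity plus Lipschitz continuity of $\nabla Q$, after which the inverse function theorem applies. The remaining steps are routine norm bookkeeping.
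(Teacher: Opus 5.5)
Your proposal is correct and follows the paper's proof essentially step by step. It sandwiches $\nabla^2 Q$ between $(L/2-c_\eps)\bI_T$ and $(L/2+c_\eps)\bI_T$ via Lemma~\ref{lem:carmon}(ii), uses the inverse-Hessian formula for the conjugate with $\beta^2=\mu(L/2+c_\eps)$, and combines the inverse-difference identity with the $\beta/(L/2-c_\eps)$-Lipschitzness of $\bx(\by)=\nabla Q^*(\beta\by)$. Your inverse-function-theorem justification of the $C^2$ regularity of $Q^*$ is a detail the paper takes for granted; strong convexity alone already makes $\nabla Q$ a bijection, so the Lipschitz continuity of $\nabla Q$ you also invoke is harmless but not needed.
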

% However, the construction of \citet{zhang2021complexity} leverages the identity 
% \[
% \max_{\vy \in \sR^{d_y}} \lambda \langle \mB \vx, \vy \rangle - \frac{1}{2} \| \vy \|^2 = \frac{\lambda^2}{2} \| \mB \vx \|^2.
% % \]
% The remaining issue is information preservation. A conjugate is defined by a global maximization, so a zero-chain property cannot simply be assumed to survive conjugation. We use a uniform coordinatewise gradient bound for $h$ to prove that sufficiently small tail coordinates of $y$ produce small tail coordinates in the conjugate maximizer. The tail then decouples into a known quadratic. This yields the second-order zero-chain property needed by a resisting oracle.

In the second useful lemma, we show that the zero-chain property is preserved for the conjugate function. The proof uses Lemma \ref{lem:carmon}(iv) and decouples the tail zero coordinates into a quadratic, which allows the exact calculation of the conjugate maximization locally.

\begin{lemma} \label{lem:zero-chain}
If $ G \alpha \le L\sigma^2/4$ and $\supp(\by)\subseteq\{1,\ldots,k\}$, then
\begin{equation}\label{eq:oracle-support}
\supp(\nabla\psi(\by))\subseteq\{1,\ldots,k+1\}~~~\text{and}~~~
\supp \left(\nabla^2\psi(\by)-\frac{2\beta^2}{L} \bI_T\right)
\subseteq \{1,\ldots,k+1\}^2.
\end{equation}
\end{lemma}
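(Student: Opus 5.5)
We prove Lemma~\ref{lem:zero-chain} by computing $\nabla\psi$ and $\nabla^2\psi$ through the conjugate maximizer. Since $Q$ is strongly convex (its Hessian is at least $(L/2-c_\eps)\bI_T\succ0$ because $c_\eps\le L/8$), the conjugate $Q^*$ is smooth. Its derivatives are given by the unique maximizer $\bx^*(\bs)=\argmax_{\bx}\{\ip{\bs}{\bx}-Q(\bx)\}$:
\[
\nabla Q^*(\bs)=\bx^*(\bs),\qquad \nabla^2Q^*(\bs)=\bigl(\nabla^2Q(\bx^*(\bs))\bigr)^{-1}.
\]
With $\bs=\beta\by$ this gives $\nabla\psi(\by)=\beta\bx^*(\beta\by)$ and $\nabla^2\psi(\by)=\beta^2(\nabla^2Q(\bx^*))^{-1}$. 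So it suffices to locate the support of $\bx^*$ and the block structure of $\nabla^2Q$ at $\bx^*$.

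\textbf{Step 1: the tail of the maximizer is small.} The first-order condition reads $\beta\by=\nabla Q(\bx^*)=\frac{\alpha}{\sigma}\nabla\bar h_T(\bx^*/\sigma)+\frac{L}{2}\bx^*$. For each $j>k$ we have $y_j=0$, hence
\[
|x^*_j|=\frac{2\alpha}{L\sigma}\bigl|[\nabla\bar h_T(\bx^*/\sigma)]_j\bigr|\le\frac{2\alpha G}{L\sigma}\le\frac{\sigma}{2},
\]
using Lemma~\ref{lem:carmon}(ii) and $G\alpha\le L\sigma^2/4$. We want the strict inequality $|x^*_j/\sigma|<1/2$ so that Lemma~\ref{lem:carmon}(iv) applies. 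I expect this borderline issue to be the main obstacle. My first attempt is to sharpen the estimate: at $|t|=1/2$ the functions $\Psi$ and $\Psi'$ vanish, which makes the relevant gradient entry strictly smaller. Failing that, I would use a direct argument (Step 2) that avoids needing strictness.

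\textbf{Step 2: the tail of the maximizer is exactly zero.} Take $i=k+1$. Every coordinate $j\ge k+2$ satisfies $|x^*_j/\sigma|<1/2$ (or $\le 1/2$), and in the chain these coordinates enter only through $\Psi(\pm x_{j-1})\Gamma(\pm x_j)$ with $\Psi$ vanishing. Near $\bx^*$, therefore, $Q(\bx)=\alpha\bar h_{k+1}$-type terms in $(x_1,\dots,x_{k+1})$ plus $\frac{L}{4}\sum_{j\ge k+2}x_j^2$.

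More robustly, I would argue by uniqueness. Restrict the concave maximization to the subspace $\{x_j=0,\ j\ge k+2\}$ and take its maximizer $\tilde\bx$. By Step 1 the coordinates $k+1$ of $\tilde\bx$ satisfy the smallness bound. Because $x_{k+1}$ is small, $\Psi(\pm x_{k+1})=0$, so the partial derivatives of $\bar h_T$ in coordinates $j\ge k+2$ vanish at $\tilde\bx/\sigma$ (the $\Gamma'$ terms are multiplied by zero). Thus $\tilde\bx$ satisfies the full stationarity condition. By strict concavity, $\tilde\bx=\bx^*$, so $\supp(\bx^*)\subseteq\{1,\dots,k+1\}$ and $\supp(\nabla\psi(\by))\subseteq\{1,\ldots,k+1\}$.

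\textbf{Step 3: the Hessian structure.} By the local decomposition in Lemma~\ref{lem:carmon}(iv), at $\bx^*$ the Hessian $\nabla^2\bar h_T$ vanishes outside the block $\{1,\dots,k+1\}^2$; the entries involving $j\ge k+2$ vanish since $\Psi$ and its derivatives are zero on a neighborhood. Hence
\[
\nabla^2Q(\bx^*)=\begin{pmatrix}\bA&\bzero\\\bzero&\frac{L}{2}\bI\end{pmatrix},
\]
where $\bA$ is a $(k+1)\times(k+1)$ block. Inverting blockwise and multiplying by $\beta^2$ gives $\nabla^2\psi(\by)=\frac{2\beta^2}{L}\bI_T$ plus a matrix supported on $\{1,\ldots,k+1\}^2$. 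This proves~\eqref{eq:oracle-support}.
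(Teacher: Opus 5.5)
Your proposal is correct, and it follows a somewhat different route from the paper's. Both arguments use the first-order condition $\nabla Q(\bx(\by))=\beta\by$ to get $|x_j|\le 2G\alpha/(L\sigma)\le\sigma/2$ on the tail, and both rely on the chain structure of $\bar h_T$.

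\textbf{The paper's route.} It applies Lemma~\ref{lem:carmon}(iv) to write $Q(\bx)=Q_{k+1}(\bx_{\le k+1})+\frac{L}{4}\norm{\bx_{>k+1}}^2$ near $\bx(\by)$. It then uses continuity of $\bx(\cdot)$ so that maximizers for nearby $\by'$ stay in that neighborhood. Computing the tail conjugate explicitly gives a local decomposition $\psi(\by')=\psi_{k+1}(\by'_{\le k+1})+\frac{\beta^2}{L}\norm{\by'_{>k+1}}^2$, which it differentiates.

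\textbf{Your route.} You work pointwise with $\nabla\psi=\beta\bx^*$ and $\nabla^2\psi=\beta^2(\nabla^2Q(\bx^*))^{-1}$, which Lemma~\ref{lem:pro-psi} already provides. You prove $\supp(\bx^*)\subseteq\{1,\ldots,k+1\}$ exactly via the restricted maximizer and uniqueness, and then read off the block structure of $\nabla^2Q(\bx^*)$.

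\textbf{What each buys.} The paper obtains a stronger, neighborhood-level statement about $\psi$. However, it needs the strict inequality $|x_{k+1}|<\sigma/2$, which it asserts although Lemma~\ref{lem:carmon}(ii) only gives $\norm{\nabla\bar h_T}_\infty\le G$. Your uniqueness argument needs only $\Psi(\pm\tilde x_{k+1}/\sigma)=0$ and $\Psi(0)=\Psi'(0)=0$. These hold whenever $|\tilde x_{k+1}|\le\sigma/2$, since $\Psi\equiv0$ on $(-\infty,1/2]$. So your argument covers the equality case for the gradient claim and is more careful than the paper here. One clarification: the bound on $\tilde x_{k+1}$ comes from the restricted stationarity condition in coordinate $k+1$ (using $y_{k+1}=0$), not from Step~1 itself; it is the same computation.

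\textbf{A small fix in Step~3.} In the equality case Lemma~\ref{lem:carmon}(iv) does not apply, and $\Psi$ is not zero on a neighborhood of $1/2$. The conclusion still holds. The Hessian entries in rows and columns $j\ge k+2$ at $\bx^*/\sigma$ involve only $\Psi,\Psi'$ evaluated at $\pm x^*_{k+1}/\sigma$, and $\Psi,\Psi',\Psi''$ evaluated at $0$. All of these vanish pointwise, because $\Psi$ is flat at $1/2$: every derivative tends to $0$ as $t\downarrow 1/2$. Justify Step~3 by this direct pointwise computation instead of ``zero on a neighborhood,'' or use the sharpening you sketched, which also works.
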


Therefore, our construction in \eqref{eq:hard-f}--\eqref{eq:lower-construction} satisfies all the regularity assumptions (nonconvexity in~$\bx$ and strong-concavity in $\by)$ and is a zero-chain (after subtracting a quadratic regularizer). We can then follow the known rotation techniques in \citet{arjevani2023lower,carmon2020lower} and construct resisting oracles as \citet{nemirovskij1983problem,nesterov2018lectures} to control the progress of information flow and conclude our final lower bound.

\subsection{Final Lower Bound Using Resisting Oracles}

Let the dimension $d_x=d_y=d=3T+1$ and $\bU \in\R^{d\times T}$ be a column-orthogonal matrix. Define
\begin{equation}\label{eq:rotated-instance}
f_\bU(\bx,\by)=-\frac{L}{4} \norm \bx^2+\beta\ip \bx \by
-Q^*( \beta \bU^\top \by)-\frac{\beta^2}{L}\norm{(\bI_d-\bU\bU^\top) \by}^2.
\end{equation}
The above construction decomposes the variable $\by$ into components inside and outside the subspace spanned by $\bU$ as $\by = \bU \bU^\top \by + \by_\perp$ and also decomposes $\bx$ similarly. Since the maximization over~$\by_\perp$ contributes $(L/4)\|\bx_\perp\|^2$, we can reuse Observation \ref{observation-phi} and conclude that the primal objective is the previous one rotated by $\bU$.
\begin{observation} 
We have 
$\Phi_\bU (\bx) \triangleq \max_{\by \in \R^d} f_\bU(\bx,\by) = \alpha \bar h_T(\bU^\top\bx/\sigma)$.
\end{observation}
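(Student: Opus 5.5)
The plan is to decompose both variables along the range of $\bU$ and its orthogonal complement, and show that the maximization over $\by$ splits into two independent problems. Write $\by=\bU\bU^\top\by+\by_\perp$ with $\by_\perp:=(\bI_d-\bU\bU^\top)\by$, and similarly $\bx=\bU\bU^\top\bx+\bx_\perp$. Set $\bar\bx:=\bU^\top\bx\in\R^T$ and $\bar\by:=\bU^\top\by\in\R^T$. Because $\bU$ is column-orthogonal, we have $\norm{\bx}^2=\norm{\bar\bx}^2+\norm{\bx_\perp}^2$ and $\ip{\bx}{\by}=\ip{\bar\bx}{\bar\by}+\ip{\bx_\perp}{\by_\perp}$. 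Substituting into~\eqref{eq:rotated-instance} gives the separable form
\[
f_\bU(\bx,\by)=\Bigl[-\tfrac{L}{4}\norm{\bar\bx}^2+\beta\ip{\bar\bx}{\bar\by}-Q^*(\beta\bar\by)\Bigr]+\Bigl[-\tfrac{L}{4}\norm{\bx_\perp}^2+\beta\ip{\bx_\perp}{\by_\perp}-\tfrac{\beta^2}{L}\norm{\by_\perp}^2\Bigr].
\]
The map $\by\mapsto(\bar\by,\by_\perp)$ is a bijection from $\R^d$ onto $\R^T\times\mathrm{range}(\bI_d-\bU\bU^\top)$. Hence the maximum over $\by$ equals the sum of the two separate maxima.

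\textbf{First bracket.} This is exactly $f(\bar\bx,\bar\by)$ from~\eqref{eq:hard-f} in dimension $T$. By Observation~\ref{observation-phi}, its maximum over $\bar\by$ equals $\alpha\bar h_T(\bar\bx/\sigma)$. For completeness we recall why. Substitute $\bs=\beta\bar\by$; since $\beta>0$, this is a bijection. Then $\max_{\bs}\{\ip{\bar\bx}{\bs}-Q^*(\bs)\}=Q^{**}(\bar\bx)=Q(\bar\bx)=\alpha\bar h_T(\bar\bx/\sigma)+\frac{L}{4}\norm{\bar\bx}^2$. Biconjugation applies because $Q$ is convex (strongly convex, in fact) and continuous: its Hessian satisfies $\nabla^2 Q\succeq(L/2-c_\eps)\bI\succ0$, using $\norm{\nabla^2\bar h_T}\le\ell_1$ and $c_\eps\le L/8$. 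The $-\frac L4\norm{\bar\bx}^2$ term then cancels the $\frac{L}{4}\norm{\bar\bx}^2$ in $Q$.

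\textbf{Second bracket.} This is a strictly concave quadratic in $\by_\perp$, maximized over the subspace $\mathrm{range}(\bI_d-\bU\bU^\top)$. Since $\bx_\perp$ lies in the same subspace, the unconstrained maximizer $\by_\perp=\frac{L}{2\beta}\bx_\perp$ is feasible. The maximum value is
\[
\frac{\beta^2\norm{\bx_\perp}^2}{4\beta^2/L}=\frac{L}{4}\norm{\bx_\perp}^2,
\]
which cancels the $-\frac L4\norm{\bx_\perp}^2$ term exactly.

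Summing the two maxima gives $\Phi_\bU(\bx)=\alpha\bar h_T(\bU^\top\bx/\sigma)$, as claimed. The computation is routine; the only point needing care is justifying $Q^{**}=Q$, which requires the convexity of $Q$ under $c_\eps\le L/8$. That step is already supplied by Observation~\ref{observation-phi}.
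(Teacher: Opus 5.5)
Your proof is correct and follows the same route as the paper. The paper splits $\bx,\by$ into their components in the range of $\bU$ and its orthogonal complement, notes that maximizing over $\by_\perp$ gives $\frac{L}{4}\norm{\bx_\perp}^2$, which cancels the complementary quadratic, and reduces the range part to Observation~\ref{observation-phi} via $Q^{**}=Q$; you supply the details the paper leaves to a one-line remark, namely the separable form, the feasibility of the unconstrained maximizer in the subspace, and the convexity of $Q$ needed for biconjugation.
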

% Its primal function is
% \begin{equation}\label{eq:rotated-primal}
% \Phi_U(x)=h(U^\top x).
% \end{equation}
% To verify this, split both variables into their components in the range of $U$ and its orthogonal complement. Maximizing the complementary quadratic contributes $\lambda\norm{(I-UU^\top)x}^2/2$, cancelling the complementary $x$ term. The range component gives $h(U^\top x)$. 
We then construct an adversarial rotation matrix $\bU$ such that each query of a deterministic second-order algorithm can only reveal a new hidden direction $\bu_t$. Therefore, the last secret vector~$\bu_T$ remains unknown for the algorithm. 
\begin{lemma} \label{lem:resisting}
For any deterministic second-order oracle as per Definition \ref{dfn:so-alg-class} that makes fewer than~$T$ queries on $f_\bU(\bx,\by)$, the column-orthogonal matrix $\bU$ can be adversarially chosen such that the algorithm output $\bx^{(T)}$ satisfies that $\langle \bx^{(T)}, \bu_T \rangle = 0$.
\end{lemma}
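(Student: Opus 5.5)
We build $\bU=[\bu_1,\ldots,\bu_T]$ adaptively, one column at a time, and prove by induction that every query lives in a subspace orthogonal to the columns not yet chosen. Write $\bw^{(t)}=(\bx^{(t)},\by^{(t)})$. The inductive invariant is: after $t$ queries, the columns $\bu_1,\ldots,\bu_t$ are fixed, and the oracle answers $\sO(\bw^{(0)}),\ldots,\sO(\bw^{(t-1)})$ depend only on $\bu_1,\ldots,\bu_t$. Because the algorithm is deterministic, the query $\bw^{(t)}$ is then a fixed function of $\bu_1,\ldots,\bu_t$. We choose $\bu_{t+1}$ as a unit vector orthogonal to
\[
S_t:=\mathrm{span}\{\bu_1,\ldots,\bu_t,\bx^{(0)},\ldots,\bx^{(t)},\by^{(0)},\ldots,\by^{(t)}\}.
\]
This subspace has dimension at most $t+2(t+1)\le 3T$ for $t\le T-1$, so $d=3T+1$ leaves room for the choice. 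At the end, $\bu_T\perp\bx^{(T)}$, taking the output as an extra query if needed. This is the step that uses the dimension count.

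The core step is to show that $\sO(\bw^{(s)})$ for $s<t$ is determined by $\bu_1,\ldots,\bu_{s+1}$ alone, whatever the later columns are. Differentiate \eqref{eq:rotated-instance}. The quadratic and bilinear terms give responses independent of $\bU$, apart from the last term. That term has gradient $-\frac{2\beta^2}{L}(\bI_d-\bU\bU^\top)\by$ and Hessian $-\frac{2\beta^2}{L}(\bI_d-\bU\bU^\top)$. The $Q^*$ term has gradient $-\beta\bU\nabla\psi(\bU^\top\by)$ and Hessian $-\beta^2\bU\nabla^2\psi(\bU^\top\by)\bU^\top$, where $\psi=Q^*(\beta\,\cdot)$ as in Lemma~\ref{lem:pro-psi} with the scaling absorbed. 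Combine the two pieces of the $\by\by$-Hessian:
\[
-\bU\Bigl(\nabla^2\psi(\bU^\top\by)-\tfrac{2\beta^2}{L}\bI_T\Bigr)\bU^\top-\tfrac{2\beta^2}{L}\bI_d.
\]
The sign and scaling must be checked so that the constant $\frac{2\beta^2}{L}$ matches the one in Lemma~\ref{lem:zero-chain}; that matching is the purpose of the last regularizer.

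Fix a query $\by=\by^{(s)}$. By construction $\by\in S_s\perp\bu_j$ for $j>s$, so $\supp(\bU^\top\by)\subseteq\{1,\ldots,s\}$. Lemma~\ref{lem:zero-chain} with $k=s$ then applies, with its hypothesis $G\alpha\le L\sigma^2/4$ assumed in the parameter choice. It gives $\supp(\nabla\psi(\bU^\top\by))\subseteq\{1,\ldots,s+1\}$, and the matrix $\nabla^2\psi-\frac{2\beta^2}{L}\bI_T$ is supported on $\{1,\ldots,s+1\}^2$.

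The gradient term therefore equals $\sum_{j\le s+1}\bu_j[\nabla\psi]_j$, and the Hessian term involves only $\bu_1,\ldots,\bu_{s+1}$. In both, the coefficients are functions of $\bU^\top\by=(\bu_1^\top\by,\ldots,\bu_s^\top\by,0,\ldots,0)$. Note that $(\bI_d-\bU\bU^\top)\by=\by-\sum_{j\le s}\bu_j\bu_j^\top\by$, again depending only on early columns. So the full response at query $s$ depends only on $\bu_1,\ldots,\bu_{s+1}$, all of which are fixed before query $t>s$. This closes the induction.

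I expect the main obstacle to be bookkeeping the $\bU$-dependence correctly rather than any deep estimate. Three points need care. The projected term must cancel the tail identity part of $\nabla^2\psi$ exactly. The $s=t-1$ response must reveal only $\bu_t$, and this is why $\bu_{t+1}$ is chosen after seeing $\bw^{(t)}$. Finally, the first column must be handled at the base case: $\bu_1$ is chosen orthogonal to $\bx^{(0)}$ and $\by^{(0)}$, so that $k=0$ in Lemma~\ref{lem:zero-chain} is valid.
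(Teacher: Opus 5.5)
Your proposal is correct and takes essentially the same route as the paper. You build $\bU$ column by column, choosing each $\bu_{t+1}$ orthogonal to the earlier columns and all queries so far, and use Lemma~\ref{lem:zero-chain} to show each oracle reply depends only on the columns already fixed. The projected regularizer cancels the $\frac{2\beta^2}{L}$ identity part; the paper packages this same cancellation by rewriting $f_\bU$ with $q(\by)=\psi(\by)-\frac{\beta^2}{L}\norm{\by}^2$. Your bookkeeping is more explicit than the paper's sketch. The only slip is cosmetic: with $\psi=Q^*(\beta\,\cdot)$, the $Q^*$-term gradient and Hessian are $-\bU\nabla\psi(\bU^\top\by)$ and $-\bU\nabla^2\psi(\bU^\top\by)\bU^\top$, without the extra factors $\beta$ and $\beta^2$. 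This is what your combined Hessian formula already uses.
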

Therefore, we can know from Lemma \ref{lem:carmon}(v) that the output of the algorithm will have a large gradient norm on the primal function $\Phi_\bU(\,\cdot\,)$. This leads to the final lower bound by appropriately setting the parameters as
%The complementary $y$ curvature is $\beta^2/\lambda\in[\mu,5\mu/4]$, so all class-membership bounds proved above remain valid.
% we set and the parameters as 
\begin{equation}\label{eq:scaling}
\rho_h= \frac{\rho\kappa^{\frac{3}{2}}}{10}, \qquad
\sigma=\sqrt{\frac{2\ell_2\eps}{\rho_h}},\qquad
\alpha=\frac{\rho_h \sigma^3}{\ell_2},~~~\text{and}~~~
c_\eps=\ell_1\sqrt{\frac{2\rho_h \eps}{\ell_2}}.
\end{equation}
Under this setting, both conditions in Lemma \ref{lem:pro-psi}--\ref{lem:zero-chain} are of the form $\eps\le cL^2/(\rho\kappa^{3/2})$ with a universal positive constant $c$. Then, we can show that the constructed instance lies in the function class of Assumptions~\ref{ass:smooth}--\ref{ass:gap} and conclude the following lower bound.

\begin{theorem}\label{thm:lower}
There exists a universal constant $c>0$ such that the following statement holds. Let $L\ge16\mu>0$, $\rho>0$, $\Delta>0$, and $0<\eps\le cL^2/(\rho\kappa^{3/2})$.
For every deterministic second-order algorithm, there is a function of the form \eqref{eq:rotated-instance} satisfying Assumptions~\ref{ass:smooth}--\ref{ass:gap}, with $\bz_0=\bzero$, on which the algorithm needs at least
\begin{equation}\label{eq:lower-bound}
T = \Omega \left( \Delta\sqrt\rho\,\kappa^{\frac{3}{4}}\eps^{-\frac{3}{2}}\right)
\end{equation}
queries before it can guarantee $\norm{\nabla\Phi_\bU(\bx^{(t)})}\le\eps$.
%The dimensions are proportional to the chain length in the proof.    
\end{theorem}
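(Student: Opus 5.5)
The plan is to instantiate the rotated construction \eqref{eq:rotated-instance} with the parameters \eqref{eq:scaling}, and to add a choice of the chain length $T$ in terms of $\Delta$. There are three things to establish. First, $f_\bU$ satisfies Assumptions~\ref{ass:smooth}--\ref{ass:gap}. Second, Lemma~\ref{lem:resisting} together with Lemma~\ref{lem:carmon}(v) forces a large primal gradient before $T$ queries. Third, $T$ has the claimed order.

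\textbf{Class membership.} Strong concavity in $\by$ and the bound $\norm{\nabla^2 f_\bU}\le L$ follow from Lemma~\ref{lem:pro-psi}. The $\by$-Hessian is $-\nabla^2\psi$ on $\operatorname{range}(\bU)$, which lies between $-\tfrac53\mu$ and $-\mu$. On the complement it equals $-2\beta^2/L=-\mu(1+2c_\eps/L)$, which lies in $[-\tfrac54\mu,-\mu]$ because $c_\eps\le L/8$. The $\bx$-block is $-\tfrac L2\bI$ and the cross block is $\beta\bI$. Since $\beta\le\sqrt{\mu L}$ and $L\ge16\mu$, the spectral norm is at most $L/2+\beta+\tfrac53\mu\le L$.

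For the Hessian-Lipschitz constant, only $\psi$ contributes. Lemma~\ref{lem:pro-psi} gives the constant $\beta^3\rho_h/(L/2-c_\eps)^3$. Using $\beta^2\le \tfrac58\mu L$ and $L/2-c_\eps\ge 3L/8$, this is at most a numerical multiple of $\rho_h\mu^{3/2}L^{3/2}/L^3=\rho_h\kappa^{-3/2}$. With $\rho_h=\rho\kappa^{3/2}/10$ this is at most $\rho$ once the constant is checked; if the constant fails, I shrink the $1/10$.

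The conditions $c_\eps\le L/8$ and $G\alpha\le L\sigma^2/4$ reduce, via \eqref{eq:scaling}, to $\sqrt{\rho_h\eps}\lesssim L$. This is the assumed condition $\eps\le cL^2/(\rho\kappa^{3/2})$. Note that $\alpha/\sigma^2=\rho_h\sigma/\ell_2\propto\sqrt{\rho_h\eps}$.

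\textbf{Gap condition.} By the Observation and Lemma~\ref{lem:carmon}(i),
\[
\Phi_\bU(\bzero)-\inf\Phi_\bU\le 12\alpha T .
\]
I choose $T=\lfloor \Delta/(12\alpha)\rfloor$, which gives Assumption~\ref{ass:gap} with $\bz_0=\bzero$.

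\textbf{Lower bound.} By Lemma~\ref{lem:resisting}, after fewer than $T$ queries the output satisfies $\ip{\bx^{(t)}}{\bu_T}=0$, so the $T$-th coordinate of $\bU^\top\bx/\sigma$ is zero. Lemma~\ref{lem:carmon}(v) then gives
\[
\norm{\nabla\Phi_\bU(\bx)}=\frac{\alpha}{\sigma}\norm{\bU\nabla\bar h_T(\cdot)}>\frac{\alpha}{\sigma}.
\]
From \eqref{eq:scaling}, $\alpha/\sigma=\rho_h\sigma^2/\ell_2=2\eps$, which exceeds $\eps$. Finally,
\[
\alpha=\frac{\rho_h\sigma^3}{\ell_2}\propto \rho_h^{-1/2}\eps^{3/2},
\]
so $T\gtrsim \Delta\sqrt{\rho_h}\,\eps^{-3/2}=\Omega(\Delta\sqrt\rho\,\kappa^{3/4}\eps^{-3/2})$.

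There is an edge case when $\Delta/(12\alpha)<1$. There I only need $T\ge1$, and the bound is trivial or can be absorbed by restricting $\eps$ through the universal constant $c$.

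\textbf{Main obstacle.} The delicate step is the constant bookkeeping in the Hessian-Lipschitz bound. It must give exactly $\le\rho$, not merely $\cO(\rho)$, and it must be done simultaneously with keeping $\norm{\nabla^2 f_\bU}\le L$. I would first verify the inequality $(5/8)^{3/2}(8/3)^3\kappa^{-3/2}\rho_h\le\rho$, then adjust the factor $1/10$ in $\rho_h$ if needed, which changes $T$ only by a constant.
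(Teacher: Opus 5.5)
Your proposal is correct and follows the paper's proof essentially step for step. It uses the same class-membership checks (the Hessian-Lipschitz constant $(5/8)^{3/2}(8/3)^3/10<1$, the bound $L/2+\beta+\tfrac53\mu\le L$, and the complement curvature $2\beta^2/L\in[\mu,\tfrac54\mu]$), the same choice $T=\lfloor\Delta/(12\alpha)\rfloor$, and the same combination of $\alpha/\sigma=2\eps$ with Lemmas~\ref{lem:resisting} and~\ref{lem:carmon}(v). One small correction: the degenerate case $\Delta/(12\alpha)<1$ cannot be removed by shrinking $c$, because the condition on $\eps$ does not involve $\Delta$, which is arbitrary. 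That case must instead be read as the trivial regime of the $\Omega(\cdot)$ statement, which the paper also leaves implicit.
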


% Siqi's work leverages the fact . However, hard to generalize to high order chains. xxx We propose the conjugate embedding xxxx.
% The key is to realize a hard nonconvex function as the primal objective of a smooth minimax problem. A zero-chain hides successive directions: when the unexplored coordinates vanish, a query can reveal at most the next direction in the chain. Its gradient remains large until the last direction is reached. Let $h$ be a scaled smooth zero-chain with Hessian-Lipschitz constant
% \begin{equation}\label{eq:rho-h}
% \rho_h=\theta\rho\kappa^{3/2},
% \end{equation}
% where $\theta>0$ is a numerical constant, and suppose $\norm{\nabla^2h}\le c_\eps\le L/8$. Set

% Adding the quadratic makes $Q$ strongly convex, so the maximizer defining its conjugate is unique. The construction then uses conjugation to place the nonquadratic part of the hard instance in the maximization variable.

% %Moreover,

% Choosing $\theta$ small enough ensures that the last bound is at most $\rho$. The full Hessian of $f$ also has norm at most $L$. Thus, the primal can have third-derivative scale $\rho\kappa^{3/2}$ while the original minimax objective has third-derivative scale only $\rho$.
% numerical part
\section{Numerical Experiments}\label{sec:experiments}
We test ECRN on robust logistic regression with the objective function defined by
\[
f(\bx,\by)=\frac{1}{n}\sum_{i=1}^n\log\bigl(1+\exp(-b_i\ba_i^\top(\bx+\by))\bigr)+r(\bx)-\frac{\nu}{2}\norm{\by}^2,
\]
where $\{\ba_i,b_i\}_{i=1}^n$ are the training sets and $r(\cdot)$ is a smooth nonconvex regularizer.
We compare ECRN with MCN method~\citep{luo2022}, TT-GDA~\citep{lin2020gda}, and IAPUN~\citep{wang2024} on the datasets ``a9a'', ``ijcnn'', and ``w8a'' in LIBSVM~\citep{chang2011libsvm}.
We report the gradient norm versus the number of second-order oracles and CPU time with different condition number $\kappa = 1000$ and $\kappa=100$ in Figure~\ref{fig:experiments} and~\ref{fig:exp-k100}, respectively.
The experimental details are deferred to Appedix~\ref{app:experiments}.
\begin{figure}[p]
\centering
\includegraphics[width=0.94\textwidth]{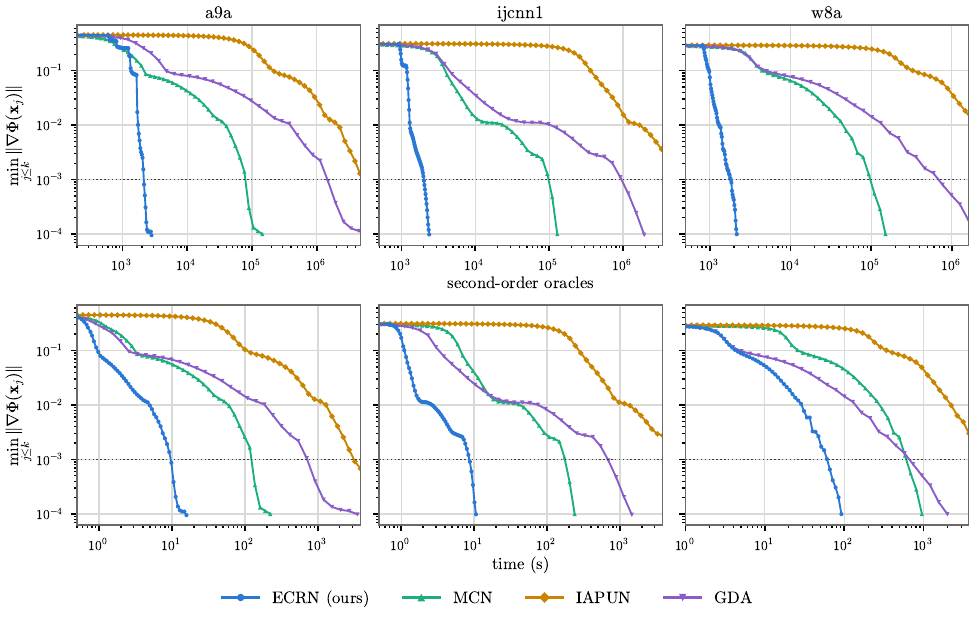}
\caption{Robust logistic regression with $\kappa=1000$.}
\label{fig:experiments}
\vspace{10pt}
\includegraphics[width=0.94\textwidth]{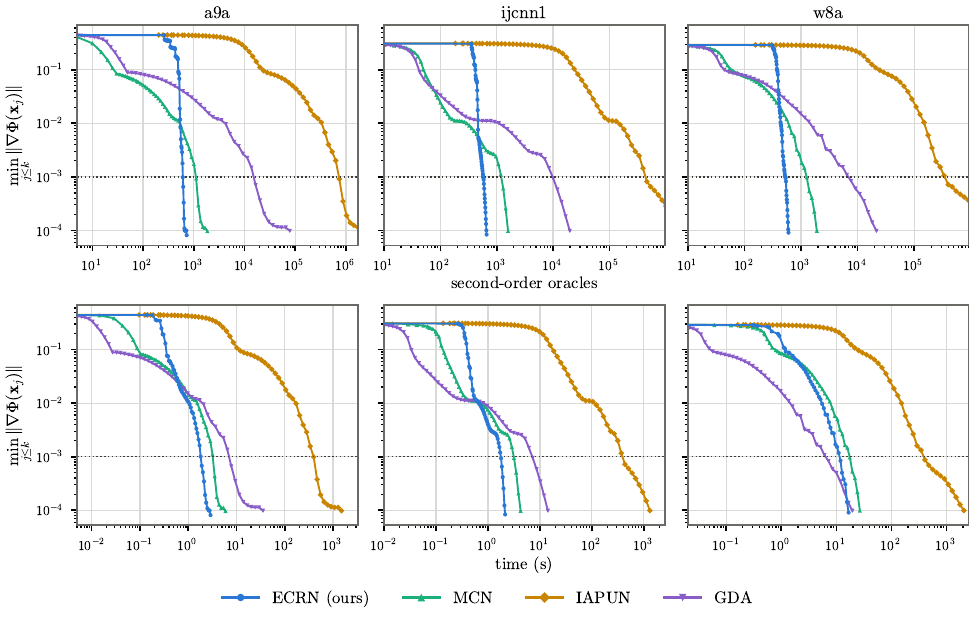}
\caption{Robust logistic regression with $\kappa=100$.}
\label{fig:exp-k100}
\end{figure}

\section{Closing Remarks}
We discuss several directions for improving ECRN and extending its analysis to other settings.

\paragraph{Efficient Variants of ECRN.}
ECRN requires exact Hessian evaluations of $f(\cdot,\cdot)$. Reusing Hessians and their factorizations may reduce its computational cost, as in lazy Hessian methods~\citep{doikov2023lazy,liu2025lazy,yang2026bilevel}.
Its nested structure also motivates a single-loop variant, building on recent advances in minimax optimization~\citep{zhang2026matching,ma2026} and single-loop cubic regularization for bilevel optimization~\citep{bi2026secondorder}.
Finally, adaptive selection of the regularization and inner accuracy may lead to a parameter-free variant that does not require prior knowledge of problem parameters~\citep{wang2026parameterfree}.

\paragraph{Extension to Higher-Order Methods.} 
The prior results mainly focus on higher-order methods for convex-concave minimax problem~~\citep{bullins2022higher,adil2022optimal,huang2022approximation,jiang2022generalized,lin2022perseus,chen2026solving,zhang2026optimal}.
It will be interesting to extend the results to higher-order methods for solving NC-SC minimax problem.
The $p$-th order methods for nonconvex minimization problem achieve $\mathcal{O}(\epsilon^{-(p+1)/p})$ complexity~\cite{birgin2017highorder,cartis2020arbitrary}. 
It is possible to study the higher-order Lipschitz parameter of the primal envelop to achieve a sharp upper bound.

\paragraph{Acceleration of First-Order Methods.}
The sharper Hessian-Lipschitz bound of $\Phi_{2L}(\cdot)$ can also improve the condition-number dependence of accelerated first-order methods.
The envelope has a $2L$-Lipschitz gradient and a $\widetilde\rho$-Lipschitz Hessian. 
For $\eps\le cL^2/\widetilde\rho$, restarted AGD~\citep[Theorem~1]{li2023restarted} therefore requires
$
\cO\bigl(\Delta L^{1/2}\widetilde\rho^{1/4}\eps^{-7/4}\bigr)
=\cO\bigl(\Delta L^{1/2}\rho^{1/4}\kappa^{3/8}\eps^{-7/4}\bigr)$
outer iterations to find an approximate first-order stationary point of the envelope.
The inexact-gradient analysis of \citet[Theorem~13]{yang2023accelerating} permits polynomially small gradient errors, while accelerated first-order methods solve each regularized saddle problem~\eqref{obj:minimax} to the required accuracy using $\tO(\sqrt\kappa)$ gradient evaluations~\citep{lin2020near,kovalev2022}.
Thus, it is possible to obtain the total complexity bound of 
$
\tO\bigl(\Delta L^{1/2}\rho^{1/4}\kappa^{7/8}\eps^{-7/4}\bigr),
$
which can improve the condition-number dependence over the $\tO(\kappa^{7/4}\eps^{-7/4})$ bound of PRAGDA~\citep{yang2023accelerating} and the $\tO(\kappa^{5/4}\eps^{-7/4})$ bound of IAPUN~\citep{wang2024} for first-order stationarity.
Establishing a matching first-order lower bound under Hessian smoothness also remains an open direction.

\paragraph{Reducing the Condition Number of Nested Methods.}
NewtonTrack may also improve the inner solves of other second-order methods for minimax~\citep{luo2022} and bilevel optimization problems~\citep{yang2026bilevel}.
Replacing gradient-based subsolvers with NewtonTrack could reduce their condition-number dependence when solution sensitivity and local convergence can be controlled along the outer iterates.
\bibliographystyle{plainnat}
\bibliography{reference}
\appendix
\paragraph{AI usage.}
The problem of reducing the condition number dependency of second-order methods~\cite{luo2022} for nonconvex-strongly-concave (NC-SC) minimax optimization was discussed among the authors since 2022.
The authors came up with the idea of envelope smoothing, then obtained an upper bound of $\tilde{\mathcal{O}}(\kappa^{3/4+{2/7}}\epsilon^{-3/2})$ by applying inexact CRN on $\Phi_{2L}(\cdot)$ with the accelerated second-order method for unbalanced SC-SC problem~\cite{chen2025solving}.
The authors found sharp upper and lower bounds presented in this paper after several rounds of discussion with ChatGPT 6 Astra. 
The results were checked, re-organized, and simplified by the authors.

\section{The Proof of Section~\ref{sec:prelim}}\label{app:prelim}

\subsection{Proof of Lemma~\ref{lem:cubic}}
\begin{proof}
We specialize \citet[Lemmas~2--3]{wang2019inexact} to the cubic parameter $M=2\rho_g$ and the errors in~\eqref{eq:model-contract}. Writing $r_k=\norm{\bs_k}$, their Taylor and cubic-model estimates give
\begin{equation}\label{eq:inexact-decrease}
\begin{aligned}
&g(\bz_k)-g(\bz_{k+1})
\ge\frac{\rho_g}{3}r_k^3-\frac{\eps}{128}r_k
-\frac{\sqrt{\rho_g\eps}}{256}r_k^2,\\
&\norm{\nabla g(\bz_{k+1})}
\le\frac{3\rho_g}{2}r_k^2+\frac{\sqrt{\rho_g\eps}}{128}r_k+\frac{\eps}{128},\\
\text{and}\qquad&\nabla^2g(\bz_{k+1})
\succeq-\Bigl(2\rho_g r_k+\frac{\sqrt{\rho_g\eps}}{128}\Bigr)\bI_d.
\end{aligned}
\end{equation}
Whenever $r_k>\sqrt{\eps/(8\rho_g)}$, we have
\begin{equation}\label{eq:cubic-decrease}
\begin{aligned}
g(\bz_k)-g(\bz_{k+1})
&\stackrel{\eqref{eq:inexact-decrease}}{\ge}
\frac{\rho_g}{3}r_k^3-\frac{\eps}{128}r_k-\frac{\sqrt{\rho_g\eps}}{256}r_k^2\\
&\quad {\ge}
\rho_g\Bigl(\frac{1}{3}-\frac{8}{128}-\frac{\sqrt8}{256}\Bigr)r_k^3
\ge\frac{\rho_g}{4}r_k^3.
\end{aligned}
\end{equation}
If the stopping condition is not satisfied at $k=0,\ldots,N-1$, then
\begin{equation}\label{eq:cubic-telescope}
\begin{aligned}
\frac{N\eps^{\frac{3}{2}}}{4\cdot8^{\frac{3}{2}}\sqrt{\rho_g}}
&\le\frac{\rho_g}{4}\sum_{k=0}^{N-1}r_k^3
\stackrel{\eqref{eq:cubic-decrease}}{\le}
\sum_{k=0}^{N-1}\bigl(g(\bz_k)-g(\bz_{k+1})\bigr)\\
&=g(\bz_0)-g(\bz_N)\le\Delta.
\end{aligned}
\end{equation}
Thus the stopping condition can fail at most $4\cdot8^{3/2}\Delta\sqrt{\rho_g}\,\eps^{-3/2}$ times. Let $K$ be the first index such that $r_K\le\sqrt{\eps/(8\rho_g)}$. Taking $N=K$ in~\eqref{eq:cubic-telescope} proves both bounds in~\eqref{eq:cubic-sum}. At iteration $K$, we have
\[
\begin{aligned}
&\norm{\nabla g(\bz_{K+1})}
\stackrel{\eqref{eq:inexact-decrease}}{\le}
\frac{3\rho_g}{2}r_K^2+\frac{\sqrt{\rho_g\eps}}{128}r_K+\frac{\eps}{128}
\le\Bigl(\frac{3}{16}+\frac{1}{128\sqrt8}+\frac{1}{128}\Bigr)\eps<\frac{\eps}{2},\\
\text{and}\qquad&\nabla^2g(\bz_{K+1})
\stackrel{\eqref{eq:inexact-decrease}}{\succeq}
-\Bigl(2\rho_g r_K+\frac{\sqrt{\rho_g\eps}}{128}\Bigr)\bI_d\succeq-\Bigl(\frac{1}{\sqrt2}+\frac{1}{128}\Bigr)\sqrt{\rho_g\eps}\,\bI_d
\succeq-\frac{3}{4}\sqrt{\rho_g\eps}\,\bI_d.
\end{aligned}
\]
This proves~\eqref{eq:cubic-stop}.
\end{proof}

\section{The Proof of Section~\ref{sec:envelope-saddle}}\label{app:envelope-saddle}

\subsection{Proof of Proposition~\ref{prop:primal}}
\begin{proof}
\emph{(i)} The Hessian blocks satisfy
\begin{equation}\label{eq:regularized-block-bounds}
\begin{aligned}
&\nabla^2_{uu}f_{\bz}(\bw)
\stackrel{\eqref{eq:regularized-derivatives}}{=}
\nabla^2_{uu}f(\bw)+2L\bI_{d_x}
\stackrel{\eqref{eq:smooth}}{\succeq}L\bI_{d_x}~~~
\text{and}~~~\nabla^2_{yy}f_{\bz}(\bw)
\stackrel{\eqref{eq:regularized-derivatives}}{=}
\nabla^2_{yy}f(\bw)
{\preceq}-\mu\bI_{d_y}.
\end{aligned}
\end{equation}
Thus $f_{\bz}(\cdot)$ is $L$-strongly convex in $\bu$ and $\mu$-strongly concave in $\by$. 
This gives $\norm{\bigl(\nabla^2 f_{\bz}(\bw)\bigr)^{-1}}\le \mu^{-1}$.
Its unique saddle point~\eqref{eq:saddle-map} exists by \citet[Lemma~1.1 and Theorem~1.2]{grimmer2023landscape}, applied globally to $f_{\bz}(\cdot)$.
% For every $\ba\in\R^{d_x}$ and $\bb\in\R^{d_y}$, we have
% \begin{equation}\label{eq:regularized-hessian-coercivity}
% \begin{aligned}
% &\mu\bigl(\norm{\ba}^2+\norm{\bb}^2\bigr)
% \le L\norm{\ba}^2+\mu\norm{\bb}^2
% \stackrel{\eqref{eq:regularized-block-bounds}}{\le}
% \begin{pmatrix}\ba\\-\bb\end{pmatrix}^{\!\top}
% \nabla^2f_{\bz}(\bw)\begin{pmatrix}\ba\\\bb\end{pmatrix}\le\norm{\begin{pmatrix}\ba\\\bb\end{pmatrix}}
% \norm{\nabla^2f_{\bz}(\bw)\begin{pmatrix}\ba\\\bb\end{pmatrix}}.
% \end{aligned}
% \end{equation}
% Hence the smallest singular value of $\nabla^2f_{\bz}(\bw)$ is at least $\mu$, proving~\eqref{eq:regularized-inverse-bound}.

\emph{(ii)} We have
\[
\begin{aligned}
&\norm{\nabla^2f_{\bz}(\bw)}
\stackrel{\eqref{eq:regularized-derivatives}}{\le}
\norm{\nabla^2f(\bw)}+2L
\stackrel{\eqref{eq:smooth}}{\le}3L\\
\text{and}\qquad&\norm{\nabla^2f_{\bz}(\bw)-\nabla^2f_{\bz}(\bv)}
\stackrel{\eqref{eq:regularized-derivatives}}{=}
\norm{\nabla^2f(\bw)-\nabla^2f(\bv)}
\stackrel{\eqref{eq:smooth}}{\le}\rho\norm{\bw-\bv}.
\end{aligned}
\]

\end{proof}

\subsection{Proof of Proposition~\ref{prop:envelope-basic}}
\begin{proof}
Equation~(10) in \citet{grimmer2023landscape}, with proximal parameter $2L$, gives the gradient identity~\eqref{eq:envelope-basic}.
By~\eqref{eq:regularized-inverse-bound}, the implicit function theorem applied to $\nabla f_{\bz}(\bw^*(\bz))=\bzero_{d_x+d_y}$ gives $\bw^*(\cdot)\in C^1$ with
\begin{equation}\label{eq:saddle-jacobian}
\begin{aligned}
\frac{\partial\bw^*(\bz)}{\partial\bz}
&=\bigl(\nabla^2f_{\bz}(\bw^*(\bz))\bigr)^{-1}
\begin{pmatrix}2L\bI_{d_x}\\\bzero_{d_y\times d_x}\end{pmatrix}\\
&\!\!\!\!\!\stackrel{\eqref{eq:regularized-derivatives},\eqref{eq:schur-complement}}{=}
2L\begin{pmatrix}
\bM(\bw^*(\bz))^{-1}\\
-\bigl(\nabla^2_{yy}f(\bw^*(\bz))\bigr)^{-1}\nabla^2_{yu}f(\bw^*(\bz))\bM(\bw^*(\bz))^{-1}
\end{pmatrix}.
\end{aligned}
\end{equation}
The Schur complement satisfies
\[
\bM(\bw)
\stackrel{\eqref{eq:schur-complement},\eqref{eq:regularized-block-bounds}}{\succeq}
\nabla^2_{uu}f(\bw)+2L\bI_{d_x}
\stackrel{\eqref{eq:regularized-block-bounds}}{\succeq}L\bI_{d_x}.
\]
In addition, \eqref{eq:saddle-jacobian} gives
\begin{equation}\label{eq:sensitivity}
\frac{\partial\bu^*(\bz)}{\partial\bz}
{=}
2L\bM(\bw^*(\bz))^{-1}.
\end{equation}
Differentiating the gradient identity gives
\[
\nabla^2\Phi_{2L}(\bz)
\stackrel{\eqref{eq:envelope-basic}}{=}
2L\bI_{d_x}-2L\frac{\partial\bu^*(\bz)}{\partial\bz}
\stackrel{\eqref{eq:sensitivity}}{=}
2L\bI_{d_x}-4L^2\bM(\bw^*(\bz))^{-1},
\]
which proves~\eqref{eq:envelope-hessian} and $\Phi_{2L}(\cdot)\in C^2$.

\emph{(i)} Equation~(9) in \citet{grimmer2023landscape} gives
\begin{equation}\label{eq:envelope-value-upper}
\Phi_{2L}(\bz)\le\Phi(\bz).
\end{equation}
Moreover,
\begin{equation}\label{eq:envelope-value-bounds}
\inf_{\bz}\Phi_{2L}(\bz)
\stackrel{\eqref{eq:envelope}}{=}
\inf_{\bu}\Bigl\{\Phi(\bu)+\inf_{\bz}L\norm{\bu-\bz}^2\Bigr\}
=\inf_{\bu}\Phi(\bu).
\end{equation}
Hence
\[
\Phi_{2L}(\bz_0)-\inf_{\bz}\Phi_{2L}(\bz)
\stackrel{\eqref{eq:envelope-value-upper},\eqref{eq:envelope-value-bounds}}{\le}
\Phi(\bz_0)-\inf_{\bz}\Phi(\bz) {\le}\Delta.
\]

\emph{(ii)} Proposition~1 in \citet{chen2023cubic} gives
\begin{equation}\label{eq:primal-schur-hessian}
\nabla^2\Phi(\bu^*(\bz))=\bM(\bw^*(\bz))-2L\bI_{d_x}.
\end{equation}
Suppose $\norm{\nabla\Phi_{2L}(\bz)}\le\eps$ and $\nabla^2\Phi_{2L}(\bz)\succeq-\eta\bI_{d_x}$. Then
\begin{equation}\label{eq:envelope-inverse-curvature}
4L^2\bM(\bw^*(\bz))^{-1}
\stackrel{\eqref{eq:envelope-hessian}}{=}
2L\bI_{d_x}-\nabla^2\Phi_{2L}(\bz)
\preceq(2L+\eta)\bI_{d_x}.
\end{equation}
Since $\bM(\bw^*(\bz))\succeq L\bI_{d_x}$, we obtain
\[
\begin{aligned}
&\nabla^2\Phi(\bu^*(\bz))
\stackrel{\eqref{eq:primal-schur-hessian}}{=}
\bM(\bw^*(\bz))-2L\bI_{d_x}
\stackrel{\eqref{eq:envelope-inverse-curvature}}{\succeq}
\Bigl(\frac{4L^2}{2L+\eta}-2L\Bigr)\bI_{d_x}=-\frac{2L\eta}{2L+\eta}\bI_{d_x}\succeq-\eta\bI_{d_x}\\
\end{aligned}
\]
and
\begin{align*}
    \norm{\nabla\Phi(\bu^*(\bz))}
\stackrel{\eqref{eq:envelope-basic}}{=}\norm{\nabla\Phi_{2L}(\bz)}\le\eps.
\end{align*}
\end{proof}

\subsection{Proof of Lemma~\ref{lem:saddle-map}}
\begin{proof}
As shown in the proof of Proposition~\ref{prop:envelope-basic}, $\bw^*(\cdot)\in C^1$ and~\eqref{eq:saddle-jacobian} holds.
For every $\bw\in\R^{d_x+d_y}$, we have
\begin{equation}\label{eq:inverse-upper-block}
\begin{aligned}
 \norm{2L\bM(\bw)^{-1}}^2
\le 4L^2\cdot L^{-2}=4,
\end{aligned}
\end{equation}
where we use $\bM(\bw)\succeq L\bI_{d_x}$.
For the lower block, we have
\begin{equation}\label{eq:schur-block-bounds}
\begin{aligned}
\nabla^2_{uy}f(\bw)\bigl(\nabla^2_{yy}f(\bw)\bigr)^{-2}\nabla^2_{yu}f(\bw) &\preceq
-\mu^{-1}\nabla^2_{uy}f(\bw)\bigl(\nabla^2_{yy}f(\bw)\bigr)^{-1}\nabla^2_{yu}f(\bw)\\
&\stackrel{\eqref{eq:schur-complement}}{=}
\mu^{-1}\bigl(\bM(\bw)-\nabla^2_{uu}f(\bw)-2L\bI_{d_x}\bigr)\\
&\stackrel{\eqref{eq:regularized-block-bounds}}{\preceq}
\mu^{-1}\bigl(\bM(\bw)-L\bI_{d_x}\bigr).
\end{aligned}
\end{equation}
Moreover, we have
\begin{equation}\label{eq:schur-inverse-quadratic-bound}
\begin{aligned}
&\bzero_{d_x\times d_x}
{\preceq}
\bM(\bw)^{-1}-L\bM(\bw)^{-2}=\frac{1}{4L}\bI_{d_x}
-L\Bigl(\bM(\bw)^{-1}-\frac{1}{2L}\bI_{d_x}\Bigr)^2
\preceq\frac{1}{4L}\bI_{d_x}.
\end{aligned}
\end{equation}
Thus the squared norm of the lower block satisfies
\begin{equation}\label{eq:inverse-lower-block}
\begin{aligned}
&\norm{2L\bigl(\nabla^2_{yy}f(\bw)\bigr)^{-1}\nabla^2_{yu}f(\bw)\bM(\bw)^{-1}}^2\\
&=4L^2\norm{\bM(\bw)^{-1}\nabla^2_{uy}f(\bw)
\bigl(\nabla^2_{yy}f(\bw)\bigr)^{-2}\nabla^2_{yu}f(\bw)\bM(\bw)^{-1}}\\
&\!\!\stackrel{\eqref{eq:schur-block-bounds}}{\le}
\frac{4L^2}{\mu}\norm{\bM(\bw)^{-1}
\bigl(\bM(\bw)-L\bI_{d_x}\bigr)\bM(\bw)^{-1}}\\
&=\frac{4L^2}{\mu}\norm{\bM(\bw)^{-1}-L\bM(\bw)^{-2}}
\stackrel{\eqref{eq:schur-inverse-quadratic-bound}}{\le}
\frac{4L^2}{\mu}\cdot\frac{1}{4L}=\kappa.
\end{aligned}
\end{equation}
Combining the two block estimates gives
\begin{equation}\label{eq:T-bound}
\begin{aligned}
&\norm{\bigl(\nabla^2f_{\bz}(\bw)\bigr)^{-1}
\begin{pmatrix}2L\bI_{d_x}\\\bzero_{d_y\times d_x}\end{pmatrix}}^2\\
&\quad\stackrel{\eqref{eq:regularized-derivatives},\eqref{eq:schur-complement}}{\le}
\norm{2L\bM(\bw)^{-1}}^2
+\norm{2L\bigl(\nabla^2_{yy}f(\bw)\bigr)^{-1}\nabla^2_{yu}f(\bw)\bM(\bw)^{-1}}^2\\
&\quad\stackrel{\eqref{eq:inverse-upper-block},\eqref{eq:inverse-lower-block}}{\le}4+\kappa.
\end{aligned}
\end{equation}
Integrating the derivative along the segment from $\bz'$ to $\bz$ yields
\[
\begin{aligned}
&\norm{\bw^*(\bz)-\bw^*(\bz')}
=\norm{\int_0^1\frac{\partial\bw^*}{\partial\bz}\bigl(\bz'+t(\bz-\bz')\bigr)(\bz-\bz')\,dt}\stackrel{\eqref{eq:saddle-jacobian},\eqref{eq:T-bound}}{\le}
\sqrt{4+\kappa}\norm{\bz-\bz'}.
\end{aligned}
\]
Finally, the distance to the saddle point satisfies
\[
\norm{\bw-\bw^*(\bz)} {\le}
\mu^{-1}\norm{\nabla f_{\bz}(\bw) -\nabla f_{\bz}(\bw^*(\bz))} = \mu^{-1} \|\nabla f_{\bz}(\bw)\|,
\]
which proves~\eqref{eq:residual-to-distance}.
\end{proof}

\subsection{Proof of Lemma~\ref{lem:pce-dm}}
\begin{proof}
For $\bw_0=\bw_{\rm init}$, equation~(24) in \citet{zhang2026near} gives
\begin{equation}\label{eq:pce-rate}
\norm{\bw_t-\bw^*(\bz)}^2
\le
288\kappa^{\frac{3}{2}}\Bigl(1+\frac{1}{360\sqrt\kappa}\Bigr)^{-t}D_{\rm init}^2.
\end{equation}
Since $\kappa\ge1$, we have
\begin{equation}\label{eq:pce-log-bound}
\log\Bigl(1+\frac{1}{360\sqrt\kappa}\Bigr)
\ge\frac{1}{360\sqrt\kappa+1}
\ge\frac{1}{720\sqrt\kappa}.
\end{equation}
Since $\nabla f_{\bz}(\bw^*(\bz))=\bzero_{d_x+d_y}$, we have
\begin{equation}\label{eq:pce-gradient-rate}
\begin{aligned}
&\norm{\nabla f_{\bz}(\bw_t)}^2
\stackrel{\eqref{eq:regularized-hessian-bounds}}{\le}
9L^2\norm{\bw_t-\bw^*(\bz)}^2\\
&\quad\stackrel{\eqref{eq:pce-rate}}{\le}
2592L^2\kappa^{\frac{3}{2}}D_{\rm init}^2
\Bigl(1+\frac{1}{360\sqrt\kappa}\Bigr)^{-t}\\
&\quad\stackrel{\eqref{eq:pce-log-bound}}{\le}
2592L^2\kappa^{\frac{3}{2}}D_{\rm init}^2
\exp\Bigl(-\frac{t}{720\sqrt\kappa}\Bigr).
\end{aligned}
\end{equation}
Taking $t=T_{\rm PCE}(\bar\eps)$ gives
\[
\begin{aligned}
&\norm{\nabla f_{\bz}(\bw_{T_{\rm PCE}(\bar\eps)})}^2\\
&\quad\stackrel{\eqref{eq:pce-gradient-rate}}{\le}
2592L^2\kappa^{\frac{3}{2}}D_{\rm init}^2
\exp\Bigl(-\frac{T_{\rm PCE}(\bar\eps)}{720\sqrt\kappa}\Bigr)\\
&\quad\stackrel{\eqref{eq:pce-horizon}}{\le}
2592L^2\kappa^{\frac{3}{2}}D_{\rm init}^2
\exp\Bigl(-\log\frac{2592L^2\kappa^{\frac{3}{2}}D_{\rm init}^2}{\bar\eps^2}\Bigr)
=\bar\eps^2.
\end{aligned}
\]
Each iteration evaluates the gradient at the predictor and at the new iterate, reusing the latter for the next stopping test and predictor. Including the initial query, the gradient count is at most
\[
2T_{\rm PCE}(\bar\eps)+1
\stackrel{\eqref{eq:pce-horizon}}{=}\tO(\sqrt\kappa).
\]
\end{proof}

\subsection{Proof of Lemma~\ref{lem:local-newton}}
\begin{proof}
Fix $\bz$ and let $\bw_t$ be the Newton iterates, with $\bw_0=\bw_{\rm init}$. Since $\nabla f_{\bz}(\bw^*(\bz))=\bzero_{d_x+d_y}$, Taylor's formula gives
\begin{equation}\label{eq:newton-taylor-remainder}
\norm{\nabla f_{\bz}(\bw_t)-\nabla^2f_{\bz}(\bw_t)(\bw_t-\bw^*(\bz))}
\stackrel{\eqref{eq:regularized-hessian-bounds}}{\le}
\frac{\rho}{2}\norm{\bw_t-\bw^*(\bz)}^2.
\end{equation}
The Newton update in Algorithm~\ref{alg:newton} therefore satisfies
\begin{equation}\label{eq:newton-error-recursion}
\begin{aligned}
&\norm{\bw_{t+1}-\bw^*(\bz)}\\
&\quad\stackrel{\eqref{eq:regularized-inverse-bound}}{\le}
\frac{1}{\mu}\norm{\nabla f_{\bz}(\bw_t)-\nabla^2f_{\bz}(\bw_t)(\bw_t-\bw^*(\bz))}\\
&\quad\stackrel{\eqref{eq:newton-taylor-remainder}}{\le}
\frac{\rho}{2\mu}\norm{\bw_t-\bw^*(\bz)}^2.
\end{aligned}
\end{equation}
For every $t\ge0$, it follows that
\begin{equation}\label{eq:newton-distance}
\begin{aligned}
&\norm{\bw_t-\bw^*(\bz)}
\stackrel{\eqref{eq:newton-error-recursion}}{\le}
\frac{2\mu}{\rho}\Bigl(\frac{\rho\norm{\bw_0-\bw^*(\bz)}}{2\mu}\Bigr)^{2^t}\stackrel{\eqref{eq:local-newton-condition}}{\le}
\frac{2\mu}{\rho}\,2^{-2^t}.
\end{aligned}
\end{equation}
Consequently, we have
\begin{equation}\label{eq:newton-residual-rate}
\norm{\nabla f_{\bz}(\bw_t)}
\stackrel{\eqref{eq:regularized-hessian-bounds}}{\le} 
3L\norm{\bw_t-\bw^*(\bz)}
\stackrel{\eqref{eq:newton-distance}}{\le}
\frac{6L\mu}{\rho}\,2^{-2^t}.
\end{equation}
Taking $t=T_{\rm N}(\bar\eps)$ in the gradient bound gives
\[
\norm{\nabla f_{\bz}(\bw_{T_{\rm N}(\bar\eps)})}
\stackrel{\eqref{eq:newton-residual-rate}}{\le}
\frac{6L\mu}{\rho}\,2^{-2^{T_{\rm N}(\bar\eps)}}
\stackrel{\eqref{eq:local-newton-horizon}}{\le}\bar\eps.
\]
Algorithm~\ref{alg:newton} therefore stops after at most $T_{\rm N}(\bar\eps)$ updates. 
\end{proof}

\section{The Proof of Section~\ref{sec:algorithm}}\label{app:algorithm}

\subsection{Proof of Lemma~\ref{lem:oracle}}
\begin{proof}
We assume that 
\begin{equation}\label{eq:oracle-residual-assumption}
\norm{\nabla f_{\bz}(\bw)}\le\widetilde\eps.
\end{equation}
Hence, we have
\begin{equation}\label{eq:oracle-distance}
\norm{\bu-\bu^*(\bz)}
\le\norm{\bw-\bw^*(\bz)}
\stackrel{\eqref{eq:residual-to-distance}}{\le}
\frac{\norm{\nabla f_{\bz}(\bw)}}{\mu}
\stackrel{\eqref{eq:oracle-residual-assumption}}{\le}
\frac{\widetilde\eps}{\mu}.
\end{equation}
The gradient approximation satisfies
\[
\begin{aligned}
\norm{\widehat{\bg}-\nabla\Phi_{2L}(\bz)}
\stackrel{\eqref{eq:approx-derivatives},\eqref{eq:envelope-basic}}{=}
2L\norm{\bu-\bu^*(\bz)}\stackrel{\eqref{eq:oracle-distance}}{\le}
\frac{2L\widetilde\eps}{\mu}
\stackrel{\eqref{eq:parameters}}{\le}\frac{\eps}{128}.
\end{aligned}
\]
The Schur complement $\bM(\bw)$ is symmetric, so $\widehat{\bH}$ is symmetric by~\eqref{eq:approx-derivatives}. Its approximation error satisfies
\[
\begin{aligned}
\norm{\widehat{\bH}-\nabla^2\Phi_{2L}(\bz)}
&\stackrel{\eqref{eq:approx-derivatives},\eqref{eq:envelope-hessian}}{=}
4L^2\norm{\bM(\bw^*(\bz))^{-1}-\bM(\bw)^{-1}}\\
&\quad\stackrel{\eqref{eq:schur-inverse-lipschitz}}{\le}
\rho(4+\kappa)\norm{\bw-\bw^*(\bz)}\\
&\quad\stackrel{\eqref{eq:oracle-distance}}{\le}
\frac{\rho(4+\kappa)\widetilde\eps}{\mu}
\stackrel{\eqref{eq:parameters}}{\le}
\frac{\sqrt{\widetilde\rho\eps}}{128},
\end{aligned}
\]
which proves~\eqref{eq:model-errors}.
We let $\bar\by:=\argmax_{\by}f(\bu,\by)$ such that $\nabla_y f(\bu,\bar\by)=\bzero_{d_y}$, then we have
\begin{equation}\label{eq:oracle-dual-monotonicity}
\begin{aligned}
&\mu\norm{\by-\bar\by}^2
{\le}
-\ip{\nabla_y f(\bu,\by)}{\by-\bar\by} \le\norm{\nabla_y f(\bu,\by)}\norm{\by-\bar\by}.
\end{aligned}
\end{equation}
Thus, the dual error obeys
\begin{equation}\label{eq:oracle-dual-distance}
\begin{aligned}
&\norm{\by-\bar\by}
\stackrel{\eqref{eq:oracle-dual-monotonicity}}{\le}
\frac{\norm{\nabla_y f(\bu,\by)}}{\mu}\stackrel{\eqref{eq:regularized-derivatives}}{=}
\frac{\norm{\nabla_y f_{\bz}(\bw)}}{\mu}
\stackrel{\eqref{eq:oracle-residual-assumption}}{\le}
\frac{\widetilde\eps}{\mu}.
\end{aligned}
\end{equation}
Danskin's identity gives
\begin{equation}\label{eq:oracle-primal-gradient-decomposition}
\begin{aligned}
&\nabla\Phi(\bu)-2L(\bz-\bu)\stackrel{\eqref{eq:regularized-derivatives}}{=}
\nabla_u f(\bu,\bar\by)-\nabla_u f(\bu,\by)
+\nabla_u f_{\bz}(\bw).
\end{aligned}
\end{equation}
Thus, we have
\begin{equation}\label{eq:primal-gradient-error}
\begin{aligned}
&\norm{\nabla\Phi(\bu)-\nabla\Phi_{2L}(\bz)}\\
&\quad\stackrel{\eqref{eq:envelope-basic}}{\le}
\norm{\nabla\Phi(\bu)-2L(\bz-\bu)}+2L\norm{\bu-\bu^*(\bz)}\\
&\quad\stackrel{\eqref{eq:oracle-primal-gradient-decomposition}}{\le}
\norm{\nabla_u f(\bu,\bar\by)-\nabla_u f(\bu,\by)}
+\norm{\nabla_u f_{\bz}(\bw)}+2L\norm{\bu-\bu^*(\bz)}\\
&\quad\stackrel{\eqref{eq:smooth}}{\le}
L\norm{\by-\bar\by}+\norm{\nabla_u f_{\bz}(\bw)}+2L\norm{\bu-\bu^*(\bz)}\\
&\!\!\!\!\!\!\!\!\!\quad\stackrel{\eqref{eq:oracle-dual-distance},\eqref{eq:oracle-residual-assumption},\eqref{eq:oracle-distance}}{\le}
\frac{L\widetilde\eps}{\mu}+\widetilde\eps+\frac{2L\widetilde\eps}{\mu}
\le\frac{4L\widetilde\eps}{\mu}.
\end{aligned}
\end{equation}
The Hessian formulas give
\begin{equation}\label{eq:oracle-exact-primal-curvature}
\begin{aligned}
&\nabla^2\Phi(\bu^*(\bz))-\nabla^2\Phi_{2L}(\bz)\\
&\quad\stackrel{\eqref{eq:primal-schur-hessian},\eqref{eq:envelope-hessian}}{=}
\bigl(\bM(\bw^*(\bz))-2L\bI_{d_x}\bigr)
\bM(\bw^*(\bz))^{-1}
\bigl(\bM(\bw^*(\bz))-2L\bI_{d_x}\bigr)
\succeq\bzero_{d_x\times d_x}.
\end{aligned}
\end{equation}
Proposition~1 in \citet{chen2023cubic} gives
\begin{equation}\label{eq:oracle-primal-hessian-lipschitz}
\norm{\nabla^2\Phi(\bu)-\nabla^2\Phi(\bu^*(\bz))}{\le}
\rho(1+\kappa)^3\norm{\bu-\bu^*(\bz)}.
\end{equation}
Therefore, we have
\begin{equation}\label{eq:output-error}
\begin{aligned}
&\nabla^2\Phi(\bu)\\
&\stackrel{\eqref{eq:oracle-primal-hessian-lipschitz}}{\succeq}
\nabla^2\Phi(\bu^*(\bz))
-\rho(1+\kappa)^3\norm{\bu-\bu^*(\bz)}\,\bI_{d_x}\\
&\!\!\!\stackrel{\eqref{eq:oracle-exact-primal-curvature},\eqref{eq:oracle-distance}}{\succeq}
\nabla^2\Phi_{2L}(\bz)
-\frac{\rho(1+\kappa)^3\widetilde\eps}{\mu}\,\bI_{d_x}.
\end{aligned}
\end{equation}
If $\bz$ is an $(\epsilon/2, 3\sqrt{\widetilde{\rho}\epsilon}/4)$-SOSP such that 
\begin{equation}\label{eq:oracle-envelope-stationarity}
\begin{aligned}
\norm{\nabla\Phi_{2L}(\bz)}\le\frac{\eps}{2}~~~
\text{and}~~~\nabla^2\Phi_{2L}(\bz)
\succeq-\frac{3}{4}\sqrt{\widetilde\rho\eps}\,\bI_{d_x}.
\end{aligned}
\end{equation}
Then, for the computed point $\bw= (\bu,\by)$ satisfied \eqref{eq:oracle-residual-assumption}, we have
\begin{align*}
&\norm{\nabla\Phi(\bu)}\\
&\quad\le\norm{\nabla\Phi_{2L}(\bz)}
+\norm{\nabla\Phi(\bu)-\nabla\Phi_{2L}(\bz)}\\
&\stackrel{\eqref{eq:oracle-envelope-stationarity},\eqref{eq:primal-gradient-error}}{\le}
\frac{\eps}{2}+\frac{4L\widetilde\eps}{\mu}
\stackrel{\eqref{eq:parameters}}{\le}\frac{\eps}{2}+\frac{\eps}{64}<\eps
\end{align*}
and 
\begin{align*}
&\nabla^2\Phi(\bu)\\
&\quad\stackrel{\eqref{eq:output-error}}{\succeq}
\nabla^2\Phi_{2L}(\bz)
-\frac{\rho(1+\kappa)^3\widetilde\eps}{\mu}\,\bI_{d_x}\\
&\quad\stackrel{\eqref{eq:oracle-envelope-stationarity}}{\succeq}
-\Bigl(\frac{3}{4}\sqrt{\widetilde\rho\eps}
+\frac{\rho(1+\kappa)^3\widetilde\eps}{\mu}\Bigr)\bI_{d_x}\\
&\quad\stackrel{\eqref{eq:parameters}}{\succeq}
-\Bigl(\frac{3}{4}+\frac{1}{4}\Bigr)\sqrt{\widetilde\rho\eps}\,\bI_{d_x}
=-\sqrt{\widetilde\rho\eps}\,\bI_{d_x}.
\end{align*}
\end{proof}

\section{The Proof of Section \ref{sec:lower}}

\subsection{Proof of Lemma \ref{lem:pro-psi}}

\begin{proof}
% Choose the target stationarity $\eps$ small enough that
% \begin{equation}\label{eq:lower-smallness}
% c_\eps\le L/8,\qquad 2G\eps\le L \sigma/ 8.
% \end{equation}
% Both conditions are of the form $\eps\le cL^2/(\rho\kappa^{3/2})$ with a universal positive constant $c$.
Recall that $Q(\bx)= \alpha \bar h_T(\bx/\sigma)+ (L/4)\norm{\bx}^2$. Then, by Lemma \ref{lem:carmon}(ii), the gradient of function $\alpha \bar h_T(\cdot/\sigma)$ is $c_\eps = \alpha\ell_1/\sigma^2$-Lipschitz continuous. Thus, the matrix $\nabla^2Q(\,\cdot\,)$ lies between $(L/2-c_\eps)\bI_T$ and $(L/2 +c_\eps)\bI_T$, and the function $Q(\,\cdot\,)$ is strongly convex. Let $\bx(\by)$ be the unique solution of $\nabla Q(\bx)=\beta \by$. For $\psi(\by) = Q^*(\beta\by)$, the differentiation of the conjugate function yields
\begin{equation}\label{eq:conjugate-derivatives}
\nabla\psi(\by)=\beta \bx(\by),\qquad
\nabla^2\psi(\by)=\beta^2\big(\nabla^2Q(\bx(\by))\big)^{-1}.
\end{equation}
For $c_\eps \le L/8$, the choice of $\beta^2=\mu(L/2+c_\eps)$ gives
\[
\mu \bI_T\preceq\nabla^2\psi(\by)
\preceq\mu\frac{\frac{L}{2}+c_\eps}{\frac{L}{2}-c_\eps}\bI_T
\preceq\frac{5}{3}\mu \bI_T.
\]
The $(L/2-c_\eps)$-strong convexity of the function $Q(\,\cdot\,)$ implies that its gradient is strongly monotone, and thus we have
\[
(\frac{L}{2} - c_\eps) \| \bx(\by) - \bx(\by') \| \le \| \nabla Q(\bx(\by)) - \nabla Q(\bx(\by')) \|.
\]
Equivalently, we have
\[
\| \bx(\by) - \bx(\by') \| \le \frac{\beta}{\frac{L}{2} - c_\eps} \| \by - \by' \|.
\]
Then, we can apply the inverse-matrix identity $\bA^{-1} - \bB^{-1} = \bA^{-1} (\bB- \bA) \bB^{-1}$ on
the Hessian matrix $\nabla^2\psi(\by)=\beta^2\big(\nabla^2Q(\bx(\by))\big)^{-1}$ to obtain that
% and  and $\norm{x(y)-x(y')}\le\beta\norm{y-y'}/(\lambda-c_\eps)$ imply
\begin{align}
\norm{\nabla^2\psi(\by)-\nabla^2\psi(\by')} \le\frac{\beta^3\rho_h}{(\frac{L}{2}-c_\eps)^3}\norm{\by-\by'}.
% \\
% &\le\frac{(5/8)^{3/2}}{(3/8)^3}\theta\rho\norm{y-y'}
% <\rho\norm{y-y'}.
\end{align}
% The numerical ratio is less than $9.4$, and $\theta=1/10$. 
% Only the $y$ block of the Hessian of $f$ varies with its argument. Furthermore,
% \[
% \norm{\nabla^2f}
% \le\lambda+\beta+\tfrac53\mu
% \le L\left(\tfrac12+\sqrt{\frac{5}{8\kappa}}+\frac{5}{3\kappa}\right)
% \le L
% \quad\text{for }\kappa\ge16.
% \]
% Thus $f$ is globally $L$-smooth, has a globally $\rho$-Lipschitz Hessian, and is $\mu$-strongly concave in $y$. Finally,
% \[
% \max_yf(x,y)=-\tfrac\lambda2\norm x^2+\psi^*(\beta x)=h(x),
% \]
% so \eqref{eq:scaled-gap-and-grad} gives the required initial primal gap.
\end{proof}

\subsection{Proof of Lemma \ref{lem:zero-chain}}

\begin{proof}
Suppose $\supp(\by) \subseteq \{1, \ldots, k\}$, equvalently, for any $j \ge k+1$, we have $y_j = 0$.
From the first-order optimality condition of the conjugate function $\nabla Q(\bx(\by)) = \beta \by$, the coordinates of $\bx(\by)$ satisfy
\[
x_j(\by) = \frac{2(\beta y_j - \frac{\alpha}{\sigma} \partial_j \bar h_T(\frac{\bx(\by)}{\sigma}))}{L}, \qquad |x_j(\by)| < \frac{2G\alpha}{L \sigma} \le \frac{\sigma}{2} \quad (\forall j \ge k+1),
\]
where the last inequality uses the condition $ G \alpha \le L\sigma^2/4$.
Let $\bx_{>i} \triangleq (x_{i+1}, x_{i+2}, \ldots, x_T)^\top \in \R^{T-i}$ denote the subvector comprising coordinates strictly greater than $i$.
According to Lemma \ref{lem:carmon}(iv), in a neighborhood of $\bx(\by)$, the function $Q$ decomposes into a function depending only on the first $k+1$ coordinates plus a decoupled quadratic term that is the squared norm of the other coordinates. Specifically, 
\[
Q(\bx) = Q_{k+1}(\bx_{\le k+1}) + \frac{L}{4} \| \bx_{>k+1} \|^2.
\]
We know from the envelope theorem that $\bx(\by)$ is continuous in $\by$. Thus, for every $\by'$ sufficiently close to $\by$, the maximizer $\bx(\by')$ remains in the same neighborhood of $\bx(\by)$, where the above decomposition holds. We can explicitly calculate the conjugate maximization over each tail coordinate, which is 
\[
\max_{x_j \in \R}\left\{\beta y_j'x_j-\frac{L}{4}x_j^2\right\}
=\frac{\beta^2}{L}(y_j')^2
\qquad (j>k+1).
\]
Summing over the tail coordinates therefore yields the local conjugate decomposition
\[
\psi(\by') = \psi_{k+1}(\by'_{\le k+1}) + \frac{\beta^2}{L}\norm{\by'_{>k+1}}^2.
\]
Differentiating this equation immediately yields the required zero-chain property in Eq. (\ref{eq:oracle-support}).
% \begin{equation}\label{eq}
% \supp \nabla\psi(\vy) \subseteq {1, \ldots, k+1}, \qquad
% \supp \left( \nabla^2\psi(\vy) - \frac{\beta^2}{\lambda} \mathbf{I} \right) \subseteq {1, \ldots, k+1}^2.
% \end{equation}
% [Add some explanations with fomulars and how the continuity of x(y) is used]
% By the envelope theorem (or continuity of $\vx(\vy)$), this primal decomposition yields the local conjugate dual decomposition:
% \begin{equation}\label{eq:conjugate-zero-chain}
% \psi(\vy') = \psi_{k+1}(\vy'_{\le k+1}) + \frac{\beta^2}{2\lambda}\norm{\vy'_{>k+1}}^2,
% \end{equation}
% for nearby $\vy'$. Differentiating \eqref{eq:conjugate-zero-chain} at $\vy$ immediately yields the required support properties:
% \begin{equation}\label{eq:oracle-support}
% \supp \nabla\psi(\vy) \subseteq \{1, \ldots, k+1\}, \qquad
% \supp \left( \nabla^2\psi(\vy) - \frac{\beta^2}{\lambda} \mathbf{I} \right) \subseteq \{1, \ldots, k+1\}^2.
% \end{equation}
\end{proof}

\subsection{Proof of Lemma \ref{lem:resisting}}

\begin{proof}
For any deterministic second-order algorithm as per Definition \ref{dfn:so-alg-class}, the $t$-th query point $(\bx^{(t)}, \by^{(t)})$ is a deterministic function of prior oracle replies.
Define $q(\by) \triangleq \psi(\by)- \beta^2\norm\by^2/L$ as the second-order zero-chain implied by Lemma \ref{lem:zero-chain}. Then, we can equivalently write our hard instance as
\[
f_\bU(\bx,\by)=-\frac{L}{4} \norm \bx^2+\beta\ip \bx \by-\frac{\beta^2}{L} \norm \by^2-q(\bU^\top \by).
\]
All dependence on the unknown rotation is in the function $q(\,\cdot\,)$ in the last term. By Lemma \ref{lem:zero-chain}, when coordinates $i,i+1,\ldots, T$ are zero, both the gradient and Hessians of the function $q$ involve only the previous coordinates $1,\ldots,i$. Therefore, we can construct $\bU$ adversarially as follows:
\begin{enumerate}
    \item Given the explored directions $\bu_1, \ldots, \bu_{t-1}$, we select $\bu_t$ orthogonal to $\bu_1, \ldots, \bu_{t-1}$ as well as all query vectors $\{\bx^{(s)}, \by^{(s)}\}_{s=1}^t$. This is always possible for $t < T$ under the assumed dimension.
    \item We complete the remaining basis vectors $\bu_{t+1}, \ldots, \bu_T$ to maintain orthogonality with all queries up to step $t$.    
\end{enumerate}
Under this construction, the $t$-th query has zero projections onto $\bu_t, \bu_{t+1}, \ldots, \bu_T$, so the second-order oracle response reveals information along $\bu_1, \ldots, \bu_t$ at most. The returned gradients and Hessians are entirely independent of the future vectors $\bu_{t+1}, \dots, \bu_T$, while earlier oracle responses remain unaffected because they rely exclusively on $\bu_1, \ldots, \bu_{t-1}$. We repeat the above procedure, and finally we can guarantee that $\langle \bx^{(T)}, \bu_T \rangle = 0$ by induction.

% Consider a deterministic algorithm. At query $t$, its point $(x^t,y^t)$ is determined by previous replies. Having fixed $u_1,\ldots,u_{t-1}$, choose $u_t$ orthogonal to these directions and all query vectors $x^1,y^1,\ldots,x^t,y^t$. This is possible in the chosen dimension for $t<T$. Complete the future columns orthogonally to all queries made so far. The current query then has zero coordinates along $u_t,u_{t+1},\ldots$, and the oracle reply can involve $u_1,\ldots,u_t$ only. Its value, gradient, and Hessian are independent of the later completion. Previous replies remain unchanged because they depended only on the earlier chosen directions.
% If the algorithm stops after $q<T$ queries and outputs $\hat x$, complete the columns $u_{q+1},\ldots,u_T$ orthogonally also to $\hat x$. There are enough dimensions: the span of the $2q$ query vectors, $q$ fixed directions, and the output has dimension at most $3q+1$, leaving at least $T-q$ dimensions. The completed instance is consistent with the entire transcript, and $\ip{u_T}{\hat x}=0$. 
\end{proof}

\subsection{Proof of Theorem \ref{thm:lower}}

\begin{proof}
%Use the definitions of our hard instance in \eqref{eq:hard-f}--\eqref{eq:lower-construction}, 
We verify that our hard instance in \eqref{eq:rotated-instance} lies in our function class. First, the Hessian smoothness in Assumption \ref{ass:smooth} holds due to the Hessian Lipschitzness in Lemma \ref{lem:pro-psi} and our parameter setting that
\[
\norm{\nabla^2\psi(\by)-\nabla^2\psi(\by')} \le\frac{\beta^3\rho_h}{(\frac{L}{2}-c_\eps)^3}\norm{\by-\by'}\ \le\frac{(\frac{5}{8})^{\frac{3}{2}}}{10 \cdot (\frac{3}{8})^3} \norm{\by-\by'} <\rho\norm{\by-\by'}.
\]
And the gradient smoothness in Assumption \ref{ass:smooth} is also satisfied, because for $\kappa \ge 16$,  we have
\[
\norm{\nabla^2f}
\le \frac{L}{2}+\beta+\frac{5}{3}\mu
\le L\left(\frac{1}{2}+\sqrt{\frac{5}{8\kappa}}+\frac{5}{3\kappa}\right)
\le L.
\]
Second, the strong concavity Assumption \ref{ass:concave} holds due to the lower curvature lower bound of $\bU^\top \by$ in Lemma \ref{lem:pro-psi} and the curvature in $\by_\perp$ is $2\beta^2/L \in[\mu,5\mu/4]$. Third, we examine the final Assumption \ref{ass:gap}. Since $\Phi_\bU(\bx) = \alpha \bar h_T(\bU^\top\bx/\sigma)$, Lemma \ref{lem:carmon}(i) indicates that
\begin{equation}\label{eq:scaled-gap-and-grad}
\Phi_\bU(\bzero)-\inf_{\bx \in \R^d} \Phi_\bU(\bx)\le12T\alpha.
% \qquad
% |x_T|<\sigma\ \Longrightarrow\ \norm{\nabla h(x)}>2\eps.
\end{equation}
Hence, $\Phi_\bU$ has initial gap at most $\Delta$ if we take
\begin{equation}\label{eq:chain-length}
T=
\left \lfloor \frac{\Delta}{12 \alpha}  \right \rfloor = 
\left\lfloor\frac{\Delta\sqrt{\rho_h}}{12\cdot 2^{\frac{3}{2}}\sqrt{\ell_2}\,\eps^{\frac{3}{2}}}\right\rfloor.
\end{equation}
Since $\alpha/\sigma = 2 \eps$ under the parameter setting in \eqref{eq:scaling}, we can combine Lemma \ref{lem:carmon}(v) and Lemma \ref{lem:resisting} to conclude that
\[
\norm{\nabla\Phi_\bU(\bx^{(t)})}
= \frac{\alpha}{\sigma} \norm{\bU \nabla \bar h_T({\bU^\top\bx^{(t)}}/{\sigma})}
= \frac{\alpha}{\sigma} \norm{\nabla \bar h_T({\bU^\top\bx^{(t)}}/{\sigma})}>2\eps.
\]
for any $t <T$. Substitution of $T$ in \eqref{eq:chain-length} proves Theorem~\ref{thm:lower} after choosing the universal constants.
% $\| \nabla \Phi_\mU(\vx^{(t)}) \| > 2 \eps$ for $t < T$. 
% % Furthermore,
% % Thus $f$ is globally $L$-smooth, has a globally $\rho$-Lipschitz Hessian, and is $\mu$-strongly concave in $y$.
% Then $\alpha/\sigma=2\eps$, $\norm{\nabla h}_\infty\le2G\eps$, $\norm{\nabla^2h}\le c_\eps$, and the Hessian of $h$ is $\rho_h$-Lipschitz. Further,
% Take
%  and the chain length has the required dependence.

% Consequently,
% This explicitly handles an arbitrary output; it does not assume that the output lies in the span of previously revealed directions. Therefore fewer than $T$ queries cannot guarantee the required stationarity. 
\end{proof}

% \begin{proof}
% Define $\eta=\lambda\sigma/(4\beta)$. Suppose $|y_j|<\eta$ for every $j\ge i$. The equation for $\vx(\vy)$ pf $\nabla Q(\vx(\vy)) = \beta \vy$ gives, coordinatewisely,
% \[
% x_j(y)=\frac{\beta y_j-\partial_jh(x(y))}{\lambda},\qquad
% |x_j(y)|<\frac{\beta\eta+2G\eps}{\lambda}\le\frac\sigma2
% \quad(j\ge i).
% \]
% The scaled local zero-chain property of $h$ now applies. 
% Let $\vx_{>i} \triangleq (x_{i+1}, x_{i+2}, \ldots, x_d)^\top \in \mathbb{R}^{d-i}$ denote the subvector comprising coordinates strictly greater than $i$. In a neighborhood of $x(y)$, the function $Q$ is a function of the first $i$ coordinates plus $\lambda\norm{x_{>i}}^2/2$. By continuity of $x(y)$, the same decomposition remains valid at the maximizers for nearby $y'$. Thus
% \begin{equation}\label{eq:conjugate-zero-chain}
% \psi(y')=\psi_i(y'_{\le i})+\frac{\beta^2}{2\lambda}\norm{y'_{>i}}^2
% \end{equation}
% locally, for a function $\psi_i$ involving only the first $i$ coordinates. In particular, if $\supp y\subseteq\{1,\ldots,k\}$, then
% \begin{equation}\label{eq:oracle-support}
% \supp\nabla\psi(y)\subseteq\{1,\ldots,k+1\},\qquad
% \nabla^2\psi(y)-\frac{\beta^2}{\lambda}I
% \text{ is supported on }\{1,\ldots,k+1\}^2.
% \end{equation}
% \end{proof}

\section{Experimental Details}\label{app:experiments}
All runs use double precision and a single thread on one AMD EPYC 9575F processor (64 cores, 566\,GB of RAM).
The software is Python~3.14, NumPy~2.5, and SciPy~1.18 with reference BLAS/LAPACK.

\subsection{Problem Parameters}
We choose  the regularizer as
\[
r(\bx)=\frac{\lambda_0}{2}\norm{\bx}^2+\lambda_1\sum_{j=1}^{d}\frac{x_j^2}{1+x_j^2},
\]
where $\lambda_0=10^{-4}$ and $\lambda_1=10^{-2}$.
Let $\bA$ be the data matrix with rows $\ba_i^\top$, and set $c=\norm{\bA}^2/(4n)$.
For a prescribed condition number $\kappa$, we choose
\[
\mu=\frac{3c+\lambda_0+2\lambda_1}{\kappa-1}
\qquad\text{and}\qquad
\nu=c+\mu.
\]
The Hessian of the logistic loss is bounded above by $c\bI_d$, so $f(\bx,\cdot)$ is $\mu$-strongly concave.
Assumption~\ref{ass:smooth} holds with the parameters 
\[
L=3c+\lambda_0+2\lambda_1+\mu
\qquad\text{and}\qquad
\rho=\frac{\sqrt2}{3\sqrt3\,n}\sum_{i=1}^{n}\norm{\ba_i}^3+12\lambda_1.
\]
At the initial points used in the experiments, the computed values of $\lambda_{\min}(\nabla^2\Phi(\bx_0))$ range from $-5\cdot10^{-3}$ to $-3\cdot10^{-3}$, confirming that $\Phi(\cdot)$ is nonconvex in every setting.

\subsection{Parameter Tuning}

We tune each method on a fixed stratified subset of 2{,}000 ``a9a'' training examples.
Each parameter choice is tested at $\kappa\in\{10,100\}$ from two initial points, giving four tuning runs.
Each tuning run uses $\eps=10^{-3}$ and a time limit of 300\,s.
Table~\ref{tab:experiment-tuning} lists the tuned parameters, their initial search grids, and the selected multipliers.
For ECRN, $H$ is the cubic coefficient and $R$ is the continuation radius; $c_R=+\infty$ means $m=1$ in the NewtonTrack subroutine.
For MCN, $M$ is the cubic coefficient and $\bar K_t$ is the inner AGD iteration count prescribed by \citet{luo2022}.
For GDA, the dual step size is fixed at $\eta_y=1/L$.
IAPUN tunes $L_2$ as its Hessian-Lipschitz parameter.
\begin{table}[htbp]
\centering
\caption{Initial parameter grids and selected multipliers.}
\label{tab:experiment-tuning}
\setlength{\tabcolsep}{5pt}
\renewcommand{\arraystretch}{1.4}
\begin{tabular}{@{}llcc@{}}
\toprule
\textbf{Method} & \textbf{Parameter} & \textbf{Initial Grid} & \textbf{Selected Multiplier}\\
\midrule
ECRN & $H=c_H\rho(4+\kappa)^{\frac{3}{2}}$ & $\{1,10^{-1},\ldots,10^{-6}\}$ & $c_H=10^{-4}$\\
     & $R=c_R\frac{\mu}{48\rho\sqrt\kappa}$ & $\{1,30,10^3,+\infty\}$ & $c_R=+\infty$\\
MCN  & $M=c_M\,4\sqrt2\,\kappa^3\rho$ & $\{1,10^{-1},\ldots,10^{-5}\}$ & $c_M=10^{-6}$\\
     & $K_t=\lceil c_K\bar K_t\rceil$ & $\{1,10^{-1},\ldots,10^{-3}\}$ & $c_K=10^{-2}$\\
GDA  & $\eta_x=\frac{c_\eta}{16(\kappa+1)^2L}$ & $\{1,10,\ldots,10^4\}$ & $c_\eta=10^4$\\
IAPUN & $L_2=c_L\rho(1+\kappa)^3$ & $\{1,10^{-1},\ldots,10^{-5}\}$ & $c_L=10^{-1}$\\
\bottomrule
\end{tabular}
\end{table}

During tuning, ECRN, MCN, and IAPUN use their original stopping rules.
We stop GDA at the first iterate satisfying
$
\norm{\nabla_xf(\bx,\by)}+\kappa\norm{\nabla_yf(\bx,\by)}\le\eps,
$
whose left-hand side is a upper bound on $\norm{\nabla\Phi(\bx)}$.
A parameter choice is eligible only if all four runs stop within the time limit and attain a certified gradient norm at most $\eps$.
Among eligible choices, we select the one with the smallest running time in average.
The selected multipliers in Table~\ref{tab:experiment-tuning} are used for all reported experiments.
For $c_M$, $c_L$, and $c_\eta$, we extend the grid by two powers of ten whenever the best choice lies at the searched endpoint.
The extended searches tested $c_M\in\{10^{-6},10^{-7}\}$ and $c_\eta\in\{10^5,10^6\}$.

\subsection{Evaluation Protocol}
For each dataset and condition number, all methods use the same five initial points drawn from $\mathcal N(\bzero,\bI_d)$, with $\by_0=\bzero$.
We evaluate stationarity using
\[
\nabla\Phi(\bx)=\nabla_xf\bigl(\bx,\by^*(\bx)\bigr).
\]
The maximization $\by^*(\bx)=\max_{\by\in\R^{d_y}}f(\bx,\by)$ is solved by damped Newton iterations so that the reported gradient has an error of at most $10^{-9}$.
These diagnostic evaluations are excluded from the reported oracle counts and running times.
Each run stops when its certified gradient norm is at most $10^{-4}$, or when both its oracle count and running time exceed the corresponding maximum budgets in Figures~\ref{fig:experiments} and~\ref{fig:exp-k100}.

% \subsection{Construction of the Curves}

% For each method and each budget, we first compute the smallest certified gradient norm reached in each of the five runs within that budget.
% We then plot the arithmetic mean of these five minima in Figures~\ref{fig:experiments} and~\ref{fig:exp-k100}.
% The budget is measured by oracle calls or running time; the number of completed iterations may differ across runs.

% Each horizontal axis starts at a rounded positive budget to omit the initial nearly flat portion of the curves.
% At every omitted budget, all curves remain at least $90\%$ of their initial values, corresponding to a decrease of at most $10\%$.
% All methods within a panel share the same starting budget.

\end{document}